\newtheorem{thm}{Theorem}[section]
\newtheorem{lem}[thm]{Lemma}
\newtheorem{prop}[thm]{Proposition}
\newtheorem{defn}[thm]{Definition}
\newtheorem{rem}[thm]{Remark}
\numberwithin{equation}{section}
\newcommand{\comment}[1]{}
\newcommand\eqdef{\stackrel{\text {def}}{=}}
\newcommand{\deriv}[2]{\frac{\partial #1}{\partial #2}}
\newcommand{\pscal}[2]{\left\langle #1,#2\right\rangle}
\newcommand{\bblue}[1]{\color{black} #1}
\newcommand{\Cinf}{C^\infty}
\newcommand\pl{\Delta_p}
\newcommand\ue{u^\e}
\newcommand \uej{u^{\e_j}}
\newcommand \uejk{u^{\e_{j_k}}}
\newcommand\bet[1]{\beta_{\e}(#1)}
\newcommand\R{\mathbb R} % real line
\newcommand\N{\mathbb N} 
\renewcommand\div{\mbox{div}}
\newcommand{\supp}{{ \rm supp}}
\newcommand{\dist}{{ \rm dist}}
\newcommand\p{\partial}
\newcommand\e{\varepsilon}
\renewcommand{\epsilon}{\e}
\renewcommand\H{\mathscr H}
\newcommand{\I}[1]{\chi_{\{#1>0\}}}
\newcommand{\po}[1]{\{#1>0\}}
\newcommand{\fb}[1]{\partial\{#1>0\}}
\newcommand{\vol}{{\rm{ Vol}}}
\newcommand\Om{\Omega}
\newcommand\na{\nabla}
\tikzstyle arrowstyle=[scale=1]
\tikzstyle directed=[postaction={decorate,decoration={markings,
    mark=at position .65 with {\arrow[arrowstyle]{stealth}}}}]
\tikzstyle reverse directed=[postaction={decorate,decoration={markings,
    mark=at position .65 with {\arrowreversed[arrowstyle]{stealth};}}}]
\tikzset{
diagonal fill/.style 2 args={fill=#2, path picture={
\fill[#1, sharp corners] (path picture bounding box.south west) -|
                         (path picture bounding box.north east) -- cycle;}},
reversed diagonal fill/.style 2 args={fill=#2, path picture={
\fill[#1, sharp corners] (path picture bounding box.north west) |- 
                         (path picture bounding box.south east) -- cycle;}}
}
\numberwithin{equation}{section} % numbering with sections
\begin{document}

\title[Regularity for two phase problems]
{Regularity for the two phase singular perturbation problems}

\author{Aram Karakhanyan}
\address{School of Mathematics, The University of Edinburgh, Peter Tait Guthrie Road, EH9 3FD
Edinburgh, UK}
\email{aram6k@gmail.com}
%\author{}
%\address{}

\thanks{ 35R35; 35J60}
%\subjclass{}%
\keywords{two phase free boundary problem; singular perturbation}

%\date{}%
%\dedicatory{}%
%\commby{}%
% ----------------------------------------------------------------
\begin{abstract}
We prove that an a priori BMO gradient estimate for the two phase singular perturbation problem implies  Lipschitz regularity for the limits.  This problem
 arises in the mathematical theory of combustion, where the  
reaction-diffusion is modelled by the $p$-Laplacian. 
A key tool in our approach is the weak energy identity. 
Our method provides a natural and intrinsic characterization of the free boundary points 
and can be applied to more general classes of solutions. 
 
\end{abstract}

\maketitle
% ----------------------------------------------------------------

%\section{Introduction}
\setcounter{tocdepth}{2}
\tableofcontents
%------------------------
%     SECTION
%------------------------
\section{Introduction}
A chief difficulty in working with two phase nonlinear free boundary problems
is the absence of monotonicity formulas. 
One question still unanswered is whether the weak or variational solutions to the two phase free boundary problems have optimal regularity.
These solutions, for instance, arises in the 
mathematical theory of combustion, in the models of high activation of energy.  

In this paper we address this question by developing a unified approach for the nonlinear two phase free boundary problems.
To elucidate our  main ideas we start from the singular perturbation problem for the $p$-Laplacian. 

 Let  $u^\e$ be a family of solutions to 
\begin{equation}\label{pde-0}\tag{$\mathcal P_\e$}
\pl \ue=\bet{\ue}, \quad |\ue|\le 1, \quad \mbox{in}\ B_1, 
\end{equation}
where $\e>0$ is a parameter and 
\begin{equation}\label{eq:beta-prpts}
\bet t=\frac1\e\beta\left(\frac t{\e}\right), \quad 0\le \beta\in\Cinf_0(0,1), \quad \int_{[0, 1]}\beta:=M>0.
\end{equation}
The  quasilinear operator $\pl\ue=\div(|\na \ue|^{p-2}\na\ue), 1<p<\infty$ is called the 
$p$-Laplacian. If $p>2$ then $\Delta_p$ is degenerate elliptic,   and it depicts  diffusion obeying power law.

%we prove lipschitz continuity of $\ue$ in $B_{\frac12}$.

\subsection{Known results}
For $p=2$, Zel{`}dovich and Frank-Kamenetskii studied the one dimensional version of 
\eqref{pde-0} in 1938, see \cite{Zeld} Chapter 1.4,  and calculated  the speed of 
the front which is $\sqrt{2M}$, see formula (4.30) there. 
In high dimensions the one phase  problem for the Laplacian 
was studied by Berestycki,  Caffarelli and Nirenberg \cite{BCN},  Caffarelli Vazquez \cite{CV}, 
and the 
two phase problem in  \cite{Caff-lip},  \cite{CLW-uniform} for the heat equation 
$\Delta \ue-\deriv {\ue}t=\bet\ue$. Later  
Caffarelli and Kenig studied the two phase problem for the variable coefficient case $\div(a\na\ue )-\deriv{\ue}t=\bet\ue$ \cite{Caff-Kenig}. 
The key tool the authors used in the proof of optimal regularity is the monotonicity formula of Caffarelli \cite{Caff-heat-mon}.
With its help one can establish local uniform Lipschitz  estimates in the parabolic distance.

For the nonlinear operators the extensions of these results are available only for 
the one phase problem, i.e. when $\ue\ge0$, see  
 \cite{Wolan-px}, \cite{Diego}, \cite{Led-quasi}, \cite{Dan}. 

\subsection{Heuristic discussion}
Heuristically, the limits of $u^\e$ as $\e\to 0$ are the solutions of the two phase Bernoulli type   free boundary problem
\begin{equation}
\left\{
\begin{array}{lll}
\Delta_p u=0, \quad &\ \text{in}\ (\po u\cup\{u<0\})\cap B_1,\\
|\na u^+|^p-|\na u^-|^p=pM,  &\ \text{on}\ \fb u.
\end{array}
\right.
\end{equation}

In order to pass to the limit we need some uniform 
bounds for $\ue$, say $\|u^\e\|_{L^\infty(B_1)}\le 1$. 
Then from Caccioppoli's inequality  it follows that $\sup_\e\|u^\e\|_{W^{1, p}(B_R)}\le C(R, n,p)$, for every $R<1$.
Assume that $p>n$,  then from  Sobolev's embedding theorem we infer that $\ue$ is locally uniformly H\"older continuous. 
Consequently, if $u(x)>0$ for some $x\in B_1$, then $u$ is $p$-harmonic in some neighborhood of $x$. 
Moreover, the uniform convergence 
also implies that $\na \uej\to \na u$ strongly in $L^p_{loc}(B_1)$. 

\bigskip 

Using  these observations it is easy to see that 
the limits $u$ of the singular perturbation problem  satisfy the weak energy 
identity 
\[
\int \left[|\na u|^p+p\mathcal B^\ast(x))\right]\div X=p\int|\na u|^{p-2}\na u \na X\na u, 
\]
where {\bblue $X=(X^1, \dots, X^n)$} is a $C^1$ vectorfield with support in $B_1$ {\bblue and $\nabla X=\partial_i X^j$ is the gradient of $X$.}
The function $\mathcal B^\ast$ is bounded and characterizes the concentration of 
the measure $\Delta_p \ue$ on the free boundary $\fb u$ as $\e\to 0$. 
If $\fb u$ is $C^1$ then one can see that $\mathcal B^\ast=M\I u$ with $M=\int \beta$, see Lemma \ref{lem:8}.
In particular,  every stationary point of the functional $\int_{B_1}|\na u|^p+M\I u$ satisfies the weak
energy identity.

We split $\fb u$  into three subsets:

\begin{itemize}
\item[(A)] $x_0\in \fb u$ is a flat point, 
\item[(B)]  the Lebesgue density of $\{u<0\}$ is small at $x_0$, 
\item[(C)]  neither (A) nor (B) hold at $x_0$.  
\end{itemize}

We remark that there may be solutions $u$, obtained as limits of \eqref{pde-0},  of the form $\alpha(x-x_0)_1^++\bar \alpha(x-x_0)_1^-+o(|x-x_0|)$
near $x_0$ with $\alpha, \bar\alpha\ge0$ and at these points the stratification argument \cite{DK} fails.
This is the reason why we further split the flat points into two parts and use the Lebesgue density to identify the 
points where $u$ is a viscosity solution. 

\medskip 

If $u$ fails to have linear growth at some point $x_0\in \fb u $ then the scaled functions 
$$
u_r(x)=\frac{u(x_0+rx)}{\sup\limits_{B_r(x_0)} |u|}
$$
converge to a limit function $u_0$ satisfying the identity  
\begin{equation}\label{fargo}
\int |\na u_0|^p\div X=p\int|\na u_0|^{p-2}\na u_0 \na X\na u_0, \quad X\in C_0^1(\R^n, \R^n). 
\end{equation}
We claim that \eqref{fargo} implies that 
$\Delta_p u_0=0$ in $\R^n$. The converse statement is obviously true since \eqref{fargo} is the domain variation of the energy $\int|\na u|^p$.
This is an interesting question of independent interest since \eqref{fargo}, as we show in this paper,  gives another characterization of the 
$p$-harmonic functions for $p>n$. That done, we can apply  Liouville's theorem to conclude that $u_0$ is a linear function.  
Combining this with the stratification argument from \cite{DK}
with respect to the modulus of continuity of the slab flatness and the Lebesgue 
density of $\{u<0\}$ we conclude that if either (A) or (B) hold then $u$ has linear growth at $x_0$. 
For the remaining case (C) we can conclude that $u$ is a viscosity solution  and $x_0$ is a flat point, so from the Harnack 
principle  we infer that $\fb u$ near $x_0$ is $C^{1, \gamma}$ smooth hypersurface, and the linear growth 
for this case follows from the standard boundary gradient estimates for $u$.

A schematic view of the main steps in the proof of Lipschitz regularity is illustrated  in Figure 1. 

% Define block styles
\tikzstyle{decision} = [diamond, draw, fill=red!50, 
    text width=4.5em, text badly centered, node distance=3cm, inner sep=0pt]
\tikzstyle{block} = [rectangle, draw, fill=yellow, 
    text width=12em, text centered, rounded corners, minimum height=4em]
    \tikzstyle{block1} = [rectangle, draw, fill=green!80, 
    text width=9em, text centered, rounded corners, minimum height=4em]
    \tikzstyle{bigblock} = [rectangle, draw, fill=orange!60, 
    text width=7em, text centered, rounded corners, minimum height=4em]
    \tikzstyle{soblock} = [rectangle, draw, fill=cyan!60, 
    text width=6em, text centered, rounded corners, minimum height=4em, drop shadow]
\tikzstyle{line} = [draw, -latex']
\tikzstyle{cloud} = [draw, ellipse,fill=yellow!60, node distance=3cm,
    minimum height=2em]
\tikzstyle{half} = [diagonal fill={cyan!60}{cyan!60},
      text width=9em, minimum height=4em,
      text centered, rounded corners, draw, drop shadow]

%\hspace{3cm}
%\hspace{0cm}
\begin{figure}
\small
\begin{tikzpicture}[node distance = 2cm, auto, scale=0.3 ]
      \vspace{0pt};
    \node [block1, node distance = 1.5cm](dic1){Suppose $x_0$ is a free boundary point};
                              %%%%%%%%%%%%%%%%
                \node[soblock, below of=dic1, node distance = 3cm] (gen1){$x_0$ is not $h_0$-flat point at scale $r>0$}; % gens
                    \node[block, below of=gen1, node distance = 3.5cm] (gen2){Linear growth at $x_0$, i.e. $$|u(x)|\le C|x-x_0|, x\in B_{r/2}(x_0)$$}; 
                        \node[half, right of=gen1, node distance = 4cm] (simp2){$x_0$ is $h_0$-flat in $B_r(x_0)$ and $\Theta(u, x_0, r)<\delta$}; 
                           \node[half, right of =simp2, node distance = 4.5cm] (simp1){$x_0$ is $h_0$-flat in $B_r(x_0)$ and $\Theta(u, x_0, r)\ge \delta$}; % add
                              \node[bigblock, right of =simp1, node distance = 4cm] (simp3){$u$ is a viscosity solution and $\partial\{ u>0\}$ is  $C^{1, \gamma}$ regular hypersurface}; 
           % paths
           \path[line] (dic1)  -- (gen1);
                  \path[line] (gen1)-- node {Yes} (gen2);
                           \path[line] (gen1) --node  {No} (simp2);
                                                      \path[line] (simp2) --node  {Yes} (gen2);
                                                                                 \path[line] (simp3) |- node  {} (gen2);
                           \path[line] (simp1) --node{Yes}(simp3);
                           \path[line] (simp2) -- node {No} (simp1);
    \end{tikzpicture}
    \caption{The diagram schematizes the proof of Theorem \ref{thm-A}
    {\bblue which is based on a series of dichotomies depending on 
    the conditions in the blue boxes. The conclusion is in the yellow box, which gives the main result  stated in Theorem \ref{thm-A}.} The constant $C$ depends only on 
    $h_0, \delta, n, p $ and  $M$. }
    \end{figure}
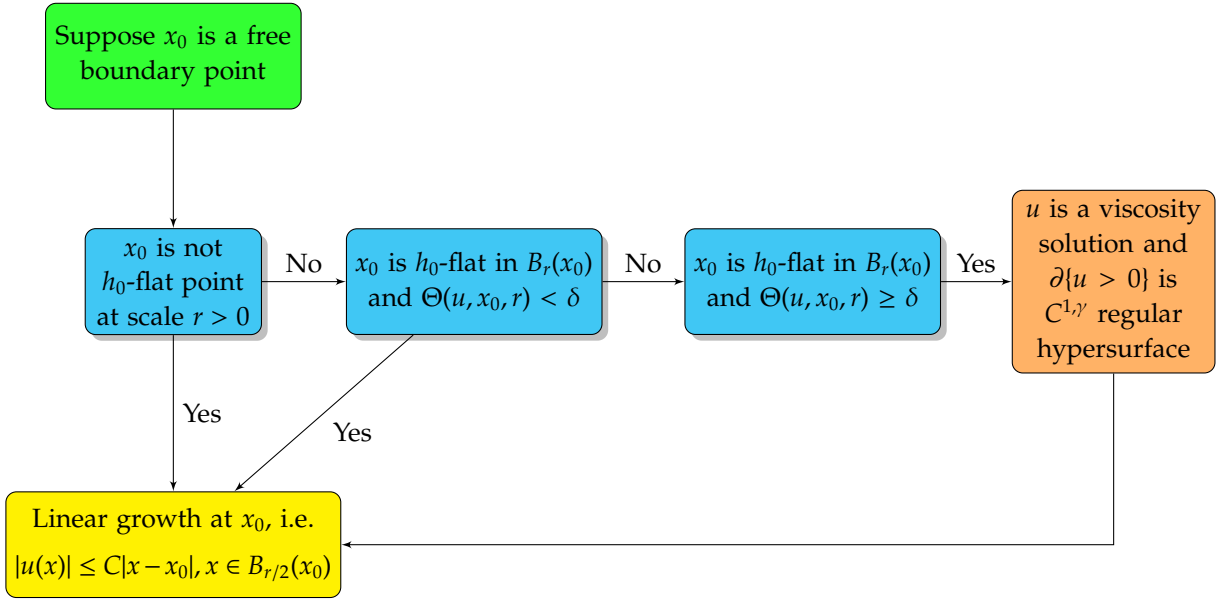

\subsection{Main results}

\begin{thm}\label{thm-A}
Let $\uej$ be a family of solutions to \eqref{pde-0}
such that 
$\uej\to u$ in $W^{1, p}_{loc}$.
If there is a universal constant $C>0$ such that 
\begin{equation}\label{BMO}
\|\na u\|_{BMO_{loc}(B_r(x_0))}\le C(\|u\|_{W^{1, p}(B_{2r}(x_0))}), \quad B_{2r}(x_0)\subset B_1, 
\end{equation}
 then $u$ is locally Lipschitz continuous.  
\end{thm}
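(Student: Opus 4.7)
The plan is to reduce the Lipschitz bound to a uniform \emph{linear growth} estimate at every free boundary point $x_0\in\partial\{u>0\}$: namely, $|u(x)|\le C|x-x_0|$ on $B_{r/2}(x_0)$, with $C$ depending only on $n$, $p$, $M$, and the constant in \eqref{BMO}. Once linear growth is in hand, the interior gradient estimates for the $p$-Laplacian applied on the open sets $\{u>0\}$ and $\{u<0\}$ promote the bound $|u|\le C\,\mathrm{dist}(\cdot,\partial\{u>0\})$ to the pointwise estimate $|\nabla u|\le C$, and hence $u\in C^{0,1}_{\mathrm{loc}}$.

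To prove linear growth I would implement the trichotomy of Figure~1. Fix $x_0\in\partial\{u>0\}$ and suppose, toward a contradiction, that $S(r):=\sup_{B_r(x_0)}|u|$ satisfies $S(r_k)/r_k\to\infty$ along some $r_k\to 0$. Consider the normalised blow-ups $u_r(x)=u(x_0+rx)/S(r)$, which satisfy $|u_r|\le 1$ in $B_1$, $u_r(0)=0$, and $\sup_{B_1}|u_r|=1$. Since $\nabla u_r=(r/S(r))\nabla u(x_0+r\,\cdot)$, a direct scaling shows that the $BMO$ semi-norm of $\nabla u_r$ on $B_R$ is bounded by $(r/S(r))\|\nabla u\|_{BMO_{\mathrm{loc}}}\to 0$; together with John--Nirenberg and the strong $W^{1,p}_{\mathrm{loc}}$ convergence $u^{\epsilon_j}\to u$, this provides the equi-integrability needed to extract a subsequential limit $u_r\to u_0$ in $W^{1,p}_{\mathrm{loc}}(\R^n)$. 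Because the concentration term $\mathcal B^*$ in the weak energy identity rescales by the factor $r/S(r)\to 0$, the limit $u_0$ obeys the pure domain variation identity \eqref{fargo} on all of $\R^n$.

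The main obstacle I anticipate is proving the converse implication ``\eqref{fargo} $\Longrightarrow \Delta_p u_0=0$ in $\R^n$'', which the introduction flags as a new characterisation of the $p$-harmonic functions. This is non-trivial for $p\ne 2$: one must pass from a domain-variation identity — quadratic in $\nabla u_0$ and linear in $\nabla X$ — to a pointwise Euler--Lagrange equation. The natural strategy is to test \eqref{fargo} with vector fields of the form $X=\varphi e_i$ and $X=\nabla\eta$ adapted to the level sets of $u_0$, and to combine the resulting identities, using strong $W^{1,p}_{\mathrm{loc}}$-convergence to pass the nonlinear $|\nabla u_r|^{p-2}\nabla u_r$ factors to the limit. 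Once this is granted, $u_0$ is $p$-harmonic on $\R^n$ with at most linear growth inherited from the BMO hypothesis, and the Liouville theorem for the $p$-Laplacian forces $u_0$ to be affine.

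The contradiction that disposes of cases (A) and (B) of the trichotomy — either $x_0$ is $h_0$-flat at scale $r$, or the Lebesgue density $\Theta(u,x_0,r)$ of $\{u<0\}$ is small — now comes from the stratification argument of \cite{DK}: an affine blow-up limit feeds back, through the modulus of continuity of slab flatness and the density of $\{u<0\}$, to produce linear growth of $u$ at $x_0$, contradicting our standing assumption. For the remaining case (C) — $x_0$ flat but with non-negligible density of the negative phase — the dichotomy identifies $u$ as a viscosity solution of the two-phase free boundary problem near $x_0$; flatness propagates through a Harnack inequality to yield $\partial\{u>0\}\in C^{1,\gamma}$ locally, and linear growth then follows from the standard boundary gradient estimates for the $p$-Laplacian. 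Assembling the three cases gives linear growth at every free boundary point and completes the proof of Theorem~\ref{thm-A}.
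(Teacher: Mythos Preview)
Your overall architecture --- trichotomy on free boundary points, blow-up, Liouville --- matches the paper. The genuine gap lies exactly where you flag the main obstacle: the implication ``\eqref{fargo} $\Rightarrow \Delta_p u_0 = 0$''. Your proposed attack, testing \eqref{fargo} with vector fields $X=\varphi e_i$ or $X=\nabla\eta$, cannot succeed in isolation, because \eqref{fargo} alone does \emph{not} characterise $p$-harmonic functions. The wedge $u_0=\alpha|x_1|$ with $\alpha>0$ satisfies \eqref{fargo} (both sides reduce to $\alpha^p$ times the integral of a total derivative of a compactly supported field, hence vanish identically), yet $\Delta_p u_0 = 2\alpha^{p-1}\,\H^{n-1}\lfloor\{x_1=0\}$ is a nontrivial measure. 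So some structure beyond \eqref{fargo} is essential, and testing with special vector fields will not produce it.

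The paper supplies that extra structure by never discarding the approximating sequence. Propositions~\ref{prop-1st-blow}--\ref{prop-2nd-blow} guarantee that every blow-up of $u$ is itself a limit of solutions to a rescaled \eqref{pde-0} with right-hand side $\sigma_j^p\beta_{\delta_j}$ and $\sigma_j\to 0$. Theorem~\ref{thm-entropy} then proves $p$-harmonicity of such limits not by manipulating the domain-variation identity, but by a viscosity-type argument: a touching-ball analysis together with the BMO bound reduces the blow-up at any free boundary point to either a linear function or a wedge $\alpha|x_1|$; the wedge is then eliminated by integrating the identity $\partial_k T_{1k}(\nabla u^{\e_j})=\partial_1\mathcal B_j$ over the cylinders $\{0<x_1<1,\ |x'|<t\}$, using $x_1$-symmetry and $\mathcal B_j\to 0$ to force $(p-1)\alpha^p\le 0$. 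This tensor integration at the $\e$-level is what replaces your vector-field testing, and it genuinely requires access to the approximating equation, not just to \eqref{fargo}.

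Two smaller points. In your rescaling the concentration term picks up the factor $(r/S(r))^p$, not $r/S(r)$. And the trichotomy labelling is slightly transposed relative to the paper: the affine-blow-up contradiction (Proposition~\ref{prop:lin flat}) handles the \emph{non-flat} case, while the small-density case is treated by a separate blow-up plus the strong minimum principle (Lemma~\ref{lem-dense}); the flat, large-density case is the one routed through viscosity solutions and $C^{1,\gamma}$ regularity of the free boundary.
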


The paper is organized as follows: 
Section \ref{sec:tools} contains some well known estimates for 
the solutions and subsolutions of the $p$-Laplacian.

In Section \ref{sec:weak-en} we prove the weak energy identity. 
If the Lipschitz continuity fails then we get that the weak energy identity 
simplifies. In  
Section \ref{sec:visc-space} we show that $BMO$ solutions satisfying this simplified identity must be $p$-harmonic

Then we consider the scenarios (A), (B) and (C) in 
Sections \ref{sec:dyadic}
 (Flatness vs linear growth), 
%Section 
\ref{sec:dens} (density of negative set) 
and
%Section 
\ref{sec:viscosity} (viscosity solutions), respectively. 
Combining our results in Section \ref{sec:main-theorem} we give the
proof of Theorem \ref{thm-A}

In Section \ref{sec:basic} we study the properties of solutions with Lipschitz regularity.
Section \ref{sec:weak} is devoted to the weak solutions.
Finally, in  Section \ref{sec:BMO} we prove a BMO type estimate for the tensor $T_{lm}=p|\na u|^{p-2}u_lu_m-|\na u|^p\delta_{lm}$.

%------------------------
%     SECTION
%------------------------
\section*{Notation} 
We fix some notation. The $n$ dimensional 
Euclidean space is denoted by $\R^n$,  $u^+=\max(u, 0)$ is the 
nonnegative part of $u$ and similarly 
$u^-=-\min(u, 0)$, so that $u=u^+-u^-$.
The partial derivatives in $x_i$ variable, $i=1, \dots, n$  are denoted by $\p_i u$ 
or $u_i$so that 
 $\p_i u=\deriv u{x_i}$, $(x-x_0)_1$ is the first slot function of the 
 vector $x-x_0$. For every $u\in W^{1, p}(B_1)$ we also define the 
 tensor $T_{lm}(\na u)=p|\na u|^{p-2}u_lu_m-|\na u|^p\delta_{lm}. $
 {If \bblue $X=(X^1, \dots, X^n)$ is a $C^1$ vectorfield, $X:\Omega\to \R^n$ then the tensor $\nabla X=\partial_i X^j$ denotes the gradient of $X$.}
Sometimes we let $\Gamma=\Gamma_u$ to denote  the free boundary $\fb u$ when no confusion can arise, $\vol(E)$ denotes the $n$-dimensional volume of a set $E$. 
%------------------------
%     SECTION
%------------------------
\section{Preliminaries and tools: Uniform estimates and compactness}
\label{sec:tools}

\begin{defn}
Let $1<p<\infty$. A function $\ue\in W^{1, p}(B_1)$ is said to be a weak solution to 
$\Delta_p \ue=\beta_\e(\ue)$ in $B_1$ if for every 
$\psi\in W_0^{1, p}(B_1)$ there holds
\[
-\int|\na \ue|^{p-2}\pscal {\na \ue}{\na \psi}=\int\beta_\e(\ue)\psi.
\]
If $\Delta_p u=0$ then $u$ is called $p$-harmonic in $B_1$.
\end{defn}

We recall the well known inequality \cite{DM}
\begin{equation}\label{eq:p-coerc}
\pscal{|\xi|^{p-2}\xi-|\eta|^{p-2}\eta}{\xi-\eta}\ge \gamma|\xi-\eta|^p, p>2.
\end{equation}

\subsection{Caccioppoli inequality and local compactness}
\begin{lem}\label{lem:Cacc}
Let $\ue$ be a family of solutions of \eqref{pde-0}, then exists a constant $C=C(n, p)>0$
depending only on $n, p$ such that for every 
$R\in(0, 1)$ there holds
\begin{equation}\label{eq:Cacc}
\int_{B_R}|\na \ue|^p\le \frac C{(1-R)^p}\int_{B_1}|\ue|^p.
\end{equation}

\end{lem}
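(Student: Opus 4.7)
The plan is to prove this via the standard Caccioppoli strategy, using $\psi = u^\e\eta^p$ as a test function, where $\eta\in C_0^\infty(B_1)$ is a smooth cutoff with $\eta\equiv 1$ on $B_R$, $0\le\eta\le 1$ and $|\na\eta|\le C/(1-R)$. Inserting this $\psi$ into the definition of weak solution and expanding $\na(u^\e\eta^p)=\eta^p\na u^\e+pu^\e\eta^{p-1}\na\eta$ yields
\[
\int|\na u^\e|^p\eta^p \;=\; -p\int|\na u^\e|^{p-2}u^\e\eta^{p-1}\pscal{\na u^\e}{\na\eta} \;-\;\int\bet{u^\e}u^\e\eta^p.
\]

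The crucial sign observation is that the last term on the right-hand side is nonpositive. Indeed, by \eqref{eq:beta-prpts}, $\beta\ge 0$ and $\supp\beta\subset(0,1)$, so $\bet{t}t\ge 0$ for every $t\in\R$; hence this nonlinear term can simply be discarded. This is what makes the inequality look identical to the one for $p$-harmonic functions, and it is the only place where the structural properties of the perturbation $\beta_\e$ enter.

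Once this is done, the remaining boundary term is estimated by Cauchy--Schwarz and then by Young's inequality, with exponents $p/(p-1)$ and $p$, in the standard way: for a suitable $\delta>0$,
\[
p\,|\na u^\e|^{p-1}\eta^{p-1}\,|u^\e||\na\eta|\;\le\;\tfrac{1}{2}|\na u^\e|^p\eta^p \;+\; C(p)|u^\e|^p|\na\eta|^p.
\]
Absorbing the first term on the right into the left-hand side of the previous display and using $\eta\equiv 1$ on $B_R$ together with $|\na\eta|\le C/(1-R)$ gives exactly \eqref{eq:Cacc}.

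There is no real obstacle here; the argument is entirely routine once one notices the sign of $\bet{u^\e}u^\e$. The one point to justify carefully is that $\psi=u^\e\eta^p\in W_0^{1,p}(B_1)$, which follows from $|u^\e|\le 1$ and $\eta\in C_0^\infty(B_1)$. The constant depends only on $n$ and $p$ through Young's inequality (and, trivially, through the choice of cutoff $\eta$).
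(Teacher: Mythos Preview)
Your proof is correct and follows essentially the same approach as the paper: the same test function $u^\e\eta^p$, the same sign observation $\beta_\e(u^\e)u^\e\ge 0$ to discard the nonlinear term, and the same absorption of the mixed term. The only cosmetic difference is that the paper uses H\"older's inequality and then divides through, whereas you use Young's inequality directly; both yield \eqref{eq:Cacc} with a constant depending only on $n$ and $p$.
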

\begin{proof}
Let $\eta\in \Cinf_0(B_1), 0\le \eta\le 1, |\na \eta|\le C/R$ for some $C=C(n)>0$, and $\eta=1$ on $B_R$. From  \eqref{eq:beta-prpts}
it follows that $\ue\beta_\e(\ue)\ge 0$. Thus using $\ue\eta^p$ as a test function in the weak
formulation of the equation $\Delta_p\ue=\beta_\e(\ue)$ we get 
\[
0\le \int |\na \ue|^{p-2}\pscal{\na \ue}{\na \ue\eta^p+p\ue \eta^{p-1}\na \eta}.
\] 
Rearranging the terms and using the H\"older inequality we obtain 
\begin{eqnarray*}
\int|\na \ue|^p\eta^p
&\le&
 -p\int|\na \ue |^{p-2}\pscal{\na \ue} {\na \eta}\ue \eta^{p-1}\\
&\le&
p\int|\na \ue |^{p-1}|\na \eta||\ue| \eta^{p-1}\\
&\le&
p\left(\int |\na \ue |^p\eta^p\right)^{1-\frac1p}\left(\int|\ue|^p\eta^p\right)^{\frac1p}.
\end{eqnarray*}
From here we see that 
\[
\int |\na \ue|^p\eta^p \le p^p \int |\ue|^p\eta^p, 
\]
and the desired estimate follows with $C(n, p)=p^p C^p(n)$. 
\end{proof}

In the next Proposition we assume $p>n$,  then from \eqref{eq:Cacc}, the assumption $|\ue|\le 1$,  and 
Sobolev's embedding theorem it follows that 
\[
\|u^\e\|_{C^{1-\frac n{p}}(\overline{B_R})}\le C(R, p, n),
\]
 for fixed  $ 0<R<1$ and uniformly in $\e$.
\begin{prop}\label{prop-comp}
Let $\ue$ be a family of solutions to \eqref{pde-0} and $p>n$. Then the following statements hold true: 
for every sequence $\e_k\to 0$ there is a subsequence, still labelled $\e_k$, and a function $u$ such that 
\begin{itemize}
\item[(i)] $u^{\e_k}\to u$ in $C^{1-\frac np}_{loc}(B_1)\cap W^{1, p}_{loc}(B_1)$,  and $\Delta_p u =0$ in $\po u\cup \{u<0\}$, 
\item[(ii)] $|\na u^{\e_k}|^{p-2}\na u^{\e_k}   
\stackrel{weakly}{\rightharpoonup}
|\na u|^{p-2}\na u$,
\item[(iii)] $|\na u^{\e_k}|^{p}  
\to 
|\na u|^{p}$ strongly in every compact of $B_1$.

\end{itemize}
\end{prop}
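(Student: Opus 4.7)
The plan is to extract a subsequence by standard compactness, identify the limit equation on the open set $\{u\ne 0\}$, and then upgrade weak to strong $W^{1,p}_{loc}$ convergence of the gradients via the monotonicity inequality \eqref{eq:p-coerc}.

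First, Lemma \ref{lem:Cacc} together with the bound $|\ue|\le 1$ provides a uniform $W^{1,p}_{loc}(B_1)$ estimate, and since $p>n$, Morrey's embedding yields $\|u^\e\|_{C^{0,1-n/p}(\ov{B_R})}\le C(R,n,p)$ for every $R<1$. Arzel\`a--Ascoli together with the reflexivity of $W^{1,p}$ then produces a subsequence $u^{\e_k}\to u$ locally uniformly (hence in $C^{0,\alpha}_{loc}$ for every $\alpha<1-n/p$) and weakly in $W^{1,p}_{loc}$. To identify the limit equation, fix a compact $K\Subset\po u$ with $u\ge 2\alpha>0$ on $K$; uniform convergence gives $u^{\e_k}\ge\alpha>\e_k$ on $K$ for $k$ large, and since $\supp\beta\subset(0,1)$ the right-hand side of \eqref{pde-0} vanishes on $K$. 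Hence $\Delta_p u^{\e_k}=0$ on $K$, and once the strong convergence established below is in hand, $\Delta_p u=0$ in $\po u$ follows by passing to the limit in the weak formulation; the case $\{u<0\}$ is analogous.

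The main step is strong $W^{1,p}_{loc}$ convergence. Fix $\eta\in C_0^\infty(B_1)$ nonnegative and test the equation for $u^{\e_k}$ against $\varphi_k=(u^{\e_k}-u)\eta^p\in W_0^{1,p}(B_1)$:
\[
\int|\na u^{\e_k}|^{p-2}\na u^{\e_k}\cdot\na\varphi_k=-\int\beta_{\e_k}(u^{\e_k})(u^{\e_k}-u)\eta^p.
\]
Testing the same equation with $\eta$ itself and applying H\"older shows $\int\beta_{\e_k}(u^{\e_k})\eta$ is bounded uniformly in $k$; since $\beta_{\e_k}\ge 0$ and $u^{\e_k}\to u$ uniformly, the right-hand side above tends to zero. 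Expanding the left-hand side, the cross term $p\int|\na u^{\e_k}|^{p-2}\na u^{\e_k}\cdot\na\eta\,(u^{\e_k}-u)\eta^{p-1}$ also vanishes in the limit by uniform convergence and the uniform bound on $|\na u^{\e_k}|^{p-1}$ in $L^{p/(p-1)}_{loc}$. Subtracting $\int|\na u|^{p-2}\na u\cdot(\na u^{\e_k}-\na u)\eta^p$, which tends to zero by the weak convergence of $\na u^{\e_k}$, leaves
\[
\int\pscal{|\na u^{\e_k}|^{p-2}\na u^{\e_k}-|\na u|^{p-2}\na u}{\na u^{\e_k}-\na u}\eta^p\to 0,
\]
and the monotonicity inequality \eqref{eq:p-coerc} then forces $\na u^{\e_k}\to\na u$ in $L^p_{loc}$, giving (iii). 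Assertion (ii) follows from the continuity of $\xi\mapsto|\xi|^{p-2}\xi$ combined with pointwise a.e.\ convergence of the gradients along a further subsequence and uniform boundedness in $L^{p/(p-1)}_{loc}$.

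The main obstacle is the potential concentration of $\beta_{\e_k}(u^{\e_k})$ on the thin slab $\{0<u^{\e_k}<\e_k\}$: a priori this nonnegative measure could survive in the limit and produce a nontrivial defect on the right-hand side of the tested equation. The key observation is that testing with $\eta$ alone already controls its total mass uniformly, so pairing it with the uniformly small factor $u^{\e_k}-u$ suffices to extinguish the right-hand side. This is precisely where the hypothesis $p>n$ is used essentially, via the uniform H\"older estimate it supplies.
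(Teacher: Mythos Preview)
Your argument is correct. The key step—testing the equation against $(u^{\e_k}-u)\eta^p$, killing the right-hand side by pairing the uniformly bounded mass $\int\beta_{\e_k}(u^{\e_k})\eta$ with the uniformly small factor $u^{\e_k}-u$, and then invoking the monotonicity inequality \eqref{eq:p-coerc}—is the classical Leray--Lions/Minty route to strong gradient convergence, and it goes through exactly as you describe.

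The paper takes a different path for (iii). Rather than testing with the difference $u^{\e_k}-u$, it tests with $u^{\e_k}\psi$ and uses the sign condition $u^{\e_k}\beta_{\e_k}(u^{\e_k})\ge 0$ to obtain $\int|\na u^{\e_k}|^p\psi\le -\int|\na u^{\e_k}|^{p-2}\pscal{\na u^{\e_k}}{\na\psi}u^{\e_k}$. Combining Fatou's lemma with the $p$-harmonicity of the limit on $\{u\ne 0\}$ (tested against $(u-s)^\pm\psi$ and sending $s\to 0$) pins down $\int|\na u|^p\psi=\lim\int|\na u^{\e_k}|^p\psi$; strong convergence of the gradients then follows from weak convergence plus convergence of the $L^p$ norms (Radon--Riesz). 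Your approach is more direct and does not need to first identify the limit as $p$-harmonic on $\{u\ne 0\}$; the paper's approach, on the other hand, avoids using the pointwise monotonicity inequality \eqref{eq:p-coerc} (which as stated requires $p>2$) and relies only on the sign of $\beta_\e$ together with the structure of the limit. Since here $p>n\ge 2$, this distinction is immaterial, but your method would need the standard $1<p<2$ variant of \eqref{eq:p-coerc} if one ever relaxed the hypothesis.
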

\begin{proof}

From 
Lemma \ref{lem:Cacc} and Sobolev's embedding theorem it follows that 
\[
\sup_{k}\left(\|u^{\e_k}\|_{C^{1-\frac n{p}}(\overline{B_R})}+ \|u^{\e_k}\|_{W^{1, p}_{loc}(B_R)}\right)\le C(R, p, n),
\]
Thus (i) follows from a standard compactness argument. 

The proof of (ii) is standard, see \cite{DM}.

To prove (iii) it is enough to  show  that 
$$
\int|\na \ue|^p\psi\to \int|\na u|^p\psi.
$$
This and the weak convergence imply strong convergence.

It follows from \eqref{eq:beta-prpts} that $\ue\beta_\e(\ue)\ge 0$. 
Using $\ue\psi$ as test function in the weak formulation of the equation we get 
as in the proof of Lemma \ref{lem:Cacc}
\begin{eqnarray}
\int|\na \ue |^p\psi\le -\int|\na \ue |^{p-2}\pscal{\na \ue}{\na \psi} \ue.
\end{eqnarray}
Applying Fatou's lemma we obtain
\begin{equation}\label{blya-1}
\int|\na u |^p\psi\le \liminf_{\e\to 0} \int|\na \ue |^p\psi\le -\int|\na u |^{p-2}\pscal{\na u}{\na \psi} u.
\end{equation}

Let $s>0$ be a small number.  Then $(u-s)^+\psi\in W^{1, p}_0(\po u)$. Therefore 
\[
\int_{\{u>s\}}|\na u|^p\psi=-\int|\na u |^{p-2}\pscal{\na u}{\na \psi} (u-s)^+.
\]
Similarly, $(u+s)^-\psi\in W^{1, p}_0(\{ u<0\})$ hence 
\[
\int_{\{u<-s\}}|\na u|^p\psi=-\int|\na u |^{p-2}\pscal{\na u}{\na \psi} (u+s)^-.
\]
After sending $s\to 0$ we conclude 
\[
\int_{}|\na u|^p\psi\ge \int_{\{u\not =0\}}|\na u|^p\psi=-\int|\na u |^{p-2}\pscal{\na u}{\na \psi} u.
\]
This and \eqref{blya-1} imply 
\[
-\int|\na u |^{p-2}\pscal{\na u}{\na \psi} u 
\le 
\int_{}|\na u|^p\psi
\le 
 \liminf_{\e\to 0} \int|\na \ue |^p\psi
 \le 
-\int|\na u |^{p-2}\pscal{\na u}{\na \psi} u.
\]
Consequently, 
\[
\int_{}|\na u|^p\psi
=
 \lim_{\e_j\to 0} \int|\na \uej |^p\psi.
\]
\end{proof}
\begin{rem}\label{xrusch}
The assumption $p>n$ is technical. It allows 
to get the uniform continuity of $\ue$, see also the discussion in Section \ref{sec:BMO}.  
\end{rem}

\begin{lem}
There exists a constant $C=C(n, p)$ depending only on $n, p$ such that 
for every 
$B_{2R}(x)\Subset B_1$ the measure $\mu=\Delta_p \ue$ satisfies the inequality 
\begin{equation}\label{eq:mu-est}
\int_{B_R(x)}d\mu\le C R^{\frac np-1} \left( 
\int_{B_{2R}(x)}|\na \ue|^p
\right)^{1-\frac1p}.
\end{equation}

\end{lem}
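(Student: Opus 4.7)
The plan is to test the equation against a cutoff function adapted to the ball $B_R(x)$ and then apply H\"older's inequality, exactly parallel to (but simpler than) the proof of Lemma \ref{lem:Cacc}. The key point is that $\mu=\Delta_p u^\e=\beta_\e(u^\e)$ is a nonnegative measure by \eqref{eq:beta-prpts}, so we lose nothing by bounding $\int_{B_R(x)}d\mu$ by the integral of $\mu$ against any $\eta\ge \chi_{B_R(x)}$.

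Concretely, I would choose $\eta\in C_0^\infty(B_{2R}(x))$ with $0\le \eta\le 1$, $\eta\equiv 1$ on $B_R(x)$, and $|\nabla\eta|\le C(n)/R$. Since $\beta_\e(u^\e)\ge 0$,
\[
\int_{B_R(x)} d\mu \;\le\; \int \eta\,\beta_\e(u^\e).
\]
Using $\eta$ as a test function in the weak formulation of $\Delta_p u^\e=\beta_\e(u^\e)$ gives
\[
\int \eta\,\beta_\e(u^\e)=-\int |\na u^\e|^{p-2}\pscal{\na u^\e}{\na\eta}.
\]

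Next I would apply H\"older's inequality with conjugate exponents $\tfrac{p}{p-1}$ and $p$:
\[
\left|\int |\na u^\e|^{p-2}\pscal{\na u^\e}{\na\eta}\right|
\le \int_{B_{2R}(x)}|\na u^\e|^{p-1}|\na \eta|
\le \left(\int_{B_{2R}(x)}|\na u^\e|^p\right)^{1-\frac1p}\left(\int_{B_{2R}(x)}|\na \eta|^p\right)^{\frac1p}.
\]
The last factor is controlled by $|\na\eta|\le C/R$ and $|B_{2R}(x)|\le C R^n$, which yields $\left(\int |\na\eta|^p\right)^{1/p}\le C\,R^{\frac{n}{p}-1}$. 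Combining these inequalities gives \eqref{eq:mu-est}.

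There is no real obstacle here: the argument is just the weak formulation plus H\"older, and the only thing to check is that the nonnegativity of $\beta_\e$ lets us drop the indicator of $B_R(x)$ in favor of the smooth cutoff $\eta$. The dependence of the constant on $n,p$ enters only through the choice of the cutoff and the constant in H\"older, so $C=C(n,p)$ as claimed.
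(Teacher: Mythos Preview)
Your proof is correct, but it proceeds differently from the paper's argument. The paper does not introduce a cutoff: instead it applies the divergence theorem on each ball $B_r(x)$ to write $\int_{B_r}d\mu=\int_{\partial B_r}|\na\ue|^{p-2}\pscal{\na\ue}{\nu}\le\int_{\partial B_r}|\na\ue|^{p-1}$, then integrates this inequality in $r$ over $[0,R]$, turning the right-hand side into $\int_{B_R}|\na\ue|^{p-1}$ via the coarea formula, and finally bounds the left-hand side from below by $\tfrac{R}{2}\int_{B_{R/2}}d\mu$ using $\mu\ge 0$. H\"older then gives the claim. Your cutoff argument is more direct and avoids the radial averaging step entirely; the only ingredient you need beyond the weak formulation is the nonnegativity of $\beta_\e$, which you invoke exactly where it is needed. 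The paper's route has the minor cosmetic advantage that the gradient integral on the right is over $B_R$ rather than $B_{2R}$ (before the final relabelling $R/2\to R$), but this makes no difference for the application. Both approaches yield the same exponent $R^{n/p-1}$ and a constant depending only on $n,p$.
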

\begin{proof}
Using the divergence theorem we get the estimate 
\begin{eqnarray}\label{blya-2}
\int_{B_r(x)}\Delta_p\ue=\int_{\p B_r(x)}|\na \ue|^{p-2}\pscal{\na \ue}\nu\le \int_{\p B_r(x)}|\na \ue|^{p-1}.
\end{eqnarray}
Integrating both sides of \eqref{blya-2} over $r\in[0, R]$ we obtain
\begin{eqnarray}\label{blya-3}
\int_0^R\int_{\p B_r(x)}|\na \ue|^{p-1}=\int_{ B_R(x)}|\na \ue|^{p-1}\le
\left( 
\int_{B_R(x)}|\na \ue|^p
\right)^{1-\frac1p}|B_R|^{\frac1p}.
\end{eqnarray}
On the other hand 
\begin{eqnarray*}
\int_0^R\int_{B_r(x)}d\mu\ge \frac R2\int_{B_{\frac R2}(x)}d\mu.
\end{eqnarray*}
Combining this with \eqref{blya-3} we get 
\[
\int_{B_{\frac R2}}d\mu\le \frac {2|B_R|^{\frac1p}}R\left( 
\int_{B_R(x)}|\na \ue|^p
\right)^{1-\frac1p},
\]
and \eqref{eq:mu-est} follows. 
\end{proof}
\begin{prop}\label{prop-seq-comp}
Let $u_j$ be a sequence of solutions to $\Delta_p u_j=\mu_j$ in $B_1$
such that $\sup_j\|u_j\|_{W^{1, p}(B_1)}<\infty$ and $\mu_j$ are Radon measures in $B_1$
such that $\supp \mu_j\subset \fb {u_j}$.
If $u_0$ is a limit of $u_j$ then $\na u_j\to \na u_0$ strongly in $L^p_{loc}(B_1)$. 
\end{prop}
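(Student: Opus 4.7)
The plan is to replicate the structure of the proof of Proposition~\ref{prop-comp}(iii), adapted to the present measure setting. The key structural observation is that since $\supp \mu_j \subset \fb{u_j}$ and each $u_j$ is continuous with $u_j = 0$ on its own free boundary (by the regularity theory for the $p$-Laplacian, and in particular by Morrey's embedding in the paper's ambient hypothesis $p>n$; cf.\ Remark~\ref{xrusch}), one has $\int u_j \psi\, d\mu_j = 0$ for every $\psi \in \Cinf_0(B_1)$.

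After passing to a subsequence, assume $u_j \rightharpoonup u_0$ weakly in $W^{1,p}_{loc}(B_1)$, strongly in $L^p_{loc}$ by Rellich--Kondrachov, and uniformly on compact subsets by Morrey's embedding. The standard Dal Maso--Murat type argument (as invoked in the proof of Proposition~\ref{prop-comp}(ii)), based on the monotonicity \eqref{eq:p-coerc}, gives $|\na u_j|^{p-2}\na u_j \rightharpoonup |\na u_0|^{p-2}\na u_0$ weakly in $L^{p/(p-1)}_{loc}$. On compact subsets of $\po{u_0}\cup\{u_0<0\}$ the $u_j$ are strictly signed eventually, hence $p$-harmonic there, so that $\Delta_p u_0 = 0$ in $\po{u_0}\cup\{u_0<0\}$.

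Now insert $u_j\psi$, $\psi \in \Cinf_0(B_1)$ nonnegative, into the weak formulation of $\Delta_p u_j = \mu_j$: the measure term vanishes by the first observation, yielding
\[
\int |\na u_j|^p \psi = -\int |\na u_j|^{p-2}\pscal{\na u_j}{\na \psi}\, u_j.
\]
Weak-strong pairing on the right-hand side produces
\[
\lim_{j\to\infty}\int |\na u_j|^p \psi = -\int |\na u_0|^{p-2}\pscal{\na u_0}{\na \psi}\, u_0.
\]
Applying the test function trick from the proof of Proposition~\ref{prop-comp}(iii), namely $(u_0 - s)^+\psi\in W^{1,p}_0(\po{u_0})$ and $(u_0 + s)^-\psi\in W^{1,p}_0(\{u_0<0\})$ together with the $p$-harmonicity of $u_0$ on those domains and letting $s\to 0^+$, identifies the right-hand side with $\int |\na u_0|^p\psi$. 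Hence $\lim_j\int|\na u_j|^p\psi = \int|\na u_0|^p\psi$ for every nonnegative $\psi\in\Cinf_0(B_1)$; together with the weak convergence $\na u_j \rightharpoonup \na u_0$ in $L^p$ this yields the strong convergence $\na u_j \to \na u_0$ in $L^p_{loc}(B_1)$ via uniform convexity (or equivalently \eqref{eq:p-coerc}).

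The main obstacle is the rigorous justification that the measure integral $\int u_j\psi\, d\mu_j$ vanishes, that is, upgrading the inclusion $\supp \mu_j \subset \fb{u_j}$ to the assertion $u_j = 0$ $\mu_j$-almost everywhere. This requires continuity of $u_j$ up to its own free boundary and is where the paper's ambient hypothesis $p>n$ is essential.
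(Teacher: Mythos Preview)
Your proposal is correct and follows essentially the same strategy as the paper's proof: test the equation against $u_j\psi$, exploit that the measure term contributes nothing because $\supp\mu_j\subset\fb{u_j}$ and $u_j$ vanishes there, pass to the limit on the flux term, and then identify the limit with $\int|\na u_0|^p\psi$ via the $(u_0\pm s)^\pm\psi$ trick. The only cosmetic difference is that the paper records the weaker assertion $u_j\psi\,\mu_j\ge 0$ (giving $\int|\na u_j|^p\psi\le -\int|\na u_j|^{p-2}\pscal{\na u_j}{\na\psi}u_j$) and then closes with a Fatou/sandwich argument, whereas you use the equality $\int u_j\psi\,d\mu_j=0$ directly; both rest on the same fact that $u_j\equiv 0$ on $\supp\mu_j$, which you correctly flag as requiring the ambient continuity coming from $p>n$.
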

\begin{proof}
Let $0\le \psi\in C^1_0(B_1)$ then $u_j\psi \Delta_p u_j\ge 0$ in $B_1$.
Thus we have 
\begin{eqnarray}
\int|\na u_j |^p\psi\le -\int|\na u_j |^{p-2}\pscal{\na u_j}{\na \psi} u_j.
\end{eqnarray}
By Fatou's lemma  
\begin{equation}\label{blya-1}
\int|\na u |^p\psi\le \liminf_{j\to \infty} \int|\na u_j |^p\psi\le -\int|\na u |^{p-2}\pscal{\na u_0}{\na \psi} u_0.
\end{equation}

Let $s>0$ be a small number.  Then $(u_0-s)^+\psi\in W^{1, p}_0(\po {u_0})$. Therefore 
\[
\int_{\{u_0>s\}}|\na u_0|^p\psi=-\int|\na u_0 |^{p-2}\pscal{\na u_0}{\na \psi} (u_0-s)^+.
\]
Similarly, $(u_0+s)^-\psi\in W^{1, p}_0(\{u_0<0\})$ hence 
\[
\int_{\{u_0<-s\}}|\na u_0|^p\psi=-\int|\na u_0 |^{p-2}\pscal{\na u_0}{\na \psi} (u_0+s)^-.
\]
After sending $s\to 0$ we conclude 
\[
\int_{}|\na u_0|^p\psi\ge \int_{\{u_0\not =0\}}|\na u_0|^p\psi=-\int|\na u_0 |^{p-2}\pscal{\na u_0}{\na \psi} u_0.
\]
This and \eqref{blya-1} imply 
\[
-\int|\na u_0 |^{p-2}\pscal{\na u_0}{\na \psi} u_0 
\le 
\int_{}|\na u_0|^p\psi
\le 
 \liminf_{j\to \infty} \int|\na u_j |^p\psi
 \le 
-\int|\na u_0 |^{p-2}\pscal{\na u_0}{\na \psi} u_0.
\]
Consequently, 
\[
\int_{}|\na u_0|^p\psi
=
 \lim_{j\to \infty} \int|\na u_j|^p\psi.
\]

\end{proof}

%%%%%%%%%%%%%%%%

\medskip 
We  summarize the previous results replacing $B_1$ by a general domain $\mathcal D$. 
\begin{prop}\label{prop-compactness}
Let $u^\e$ be a family of solutions to \eqref{pde-0} in a
domain $\mathcal D\subset \R^n$, and $p>n$. Let us assume that
$\|u^\e\|_{L^\infty(\mathcal D)}\le A$ for some constant
$A>0$ independent of $\e.$ For every $\e_k\to 0$ there
exists a subsequence $\e_{k'}\to 0$ and $u\in C^{1-\frac np}_{loc}(\mathcal D)$, such that
\begin{itemize}
\item[(i)] $u^{\e_{k'}}\to u$ uniformly on compact subsets of $\mathcal D$,
\item[(ii)] $\na u^{\e_{k'}}\to \na u$ in $L^p_{loc}(\mathcal D)$,
\item[(iii)] $u$ is $p$-harmonic in $\mathcal D\cap (\po u\cup \{u<0\}$.
\end{itemize}

\end{prop}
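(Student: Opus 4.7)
The plan is to obtain the proposition as a localization of Proposition \ref{prop-comp}: all of the analytic content is already available, and what remains is an exhaustion argument and a Cantor diagonal extraction. Since the $p$-Laplacian is translation and scale invariant, the Caccioppoli estimate of Lemma \ref{lem:Cacc} applies verbatim on every ball $B_R(x) \Subset \mathcal D$.

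Concretely, I would fix a countable family of balls $\{B_{R_m}(x_m)\}_{m\ge 1}$ with $\overline{B_{2R_m}(x_m)} \subset \mathcal D$ covering $\mathcal D$. Caccioppoli together with $\|u^\e\|_{L^\infty(\mathcal D)} \le A$ gives
\[
\sup_\e \int_{B_{R_m}(x_m)} |\na u^\e|^p \le C(m, A, n, p),
\]
and Sobolev embedding (using $p > n$) yields a uniform $C^{1-n/p}$ bound on each $\overline{B_{R_m}(x_m)}$. A standard Cantor diagonal argument then extracts a subsequence $u^{\e_{k'}}$ converging uniformly on every $\overline{B_{R_m}(x_m)}$, hence on every compact $K \Subset \mathcal D$, to a limit $u \in C^{1-n/p}_{loc}(\mathcal D)$. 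This proves (i). The uniform $W^{1,p}_{loc}$ bound also gives weak convergence $\na u^{\e_{k'}} \rightharpoonup \na u$ in $L^p_{loc}$, which, by the result invoked in Proposition \ref{prop-comp}(ii), upgrades to weak convergence $|\na u^{\e_{k'}}|^{p-2} \na u^{\e_{k'}} \rightharpoonup |\na u|^{p-2} \na u$.

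I would then prove (iii) before (ii), reversing the order of the statement. If $u(x_0) > 0$, uniform convergence combined with $\supp \beta_\e \subset (0,\e)$ forces $\beta_{\e_{k'}}(u^{\e_{k'}}) \equiv 0$ on a fixed neighborhood of $x_0$ for all large $k'$, so passing to the limit in the weak formulation via the weak nonlinear-gradient convergence yields $\Delta_p u = 0$ near $x_0$; the case $u(x_0) < 0$ is identical. Having (iii) in hand, claim (ii) follows from the test-function argument of Proposition \ref{prop-comp}: using $u^\e \psi$ as test function and $u^\e \beta_\e(u^\e) \ge 0$, Fatou gives an upper bound for $\int |\na u|^p \psi$, while testing $(u-s)^+ \psi$ and $(u+s)^- \psi$ on $\{u > s\}$ and $\{u < -s\}$ (now legitimate by (iii)) and letting $s \to 0$ provides the matching lower bound. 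This forces $\int |\na u^{\e_{k'}}|^p \psi \to \int |\na u|^p \psi$, which combined with weak gradient convergence yields strong $L^p_{loc}$ convergence of $\na u^{\e_{k'}}$.

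The only point requiring care is the logical reordering just described: the $p$-harmonicity (iii) must be established before the strong gradient convergence (ii), because the latter's proof tests $u$ on $\{u > 0\}$ and $\{u < 0\}$ using its $p$-harmonicity there. Beyond this, the proof contains no substantive new obstacle, as every analytic estimate is inherited from Lemma \ref{lem:Cacc} and Proposition \ref{prop-comp} via translation and rescaling.
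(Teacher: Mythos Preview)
Your proposal is correct and matches the paper's approach: the paper gives no separate proof of Proposition~\ref{prop-compactness} at all, simply stating that it ``summarize[s] the previous results replacing $B_1$ by a general domain $\mathcal D$,'' so your exhaustion-plus-diagonal argument and your invocation of the analytic content of Lemma~\ref{lem:Cacc} and Proposition~\ref{prop-comp} are exactly what is intended. Your remark that (iii) must logically precede (ii) is a valid observation---indeed in Proposition~\ref{prop-comp} the $p$-harmonicity is bundled into part (i) and is available before the strong convergence in (iii) is established---so you have correctly identified and resolved the only subtlety in the passage from $B_1$ to $\mathcal D$.
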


%%%%%%%%%%%%%%%%

\medskip
\subsection{First and second blow-up}
Using  Proposition \ref{prop-compactness} we can extract a sequence  $u^{\e_j}$ for some sequence $\e_j$
such that $u^{\e_j}\to u$ uniformly  in $B_{\frac12}$.
Let $0<\rho_j\downarrow 0$, $x_j\in \fb u$ and set $u_j(x)=\frac{u(x_j+\rho_j x)}{m_j}$, where 
$m_j$ are some positive numbers such that $\sup_j\rho_j/m_j<\infty$. 
Suppose $u_j$ is uniformly bounded and $u_j\to U$ locally uniformly in $\R^n$ for some function $U$
defined on $\R^n$.

The function $U$ is called a blow-up limit of $u$ with respect to 
  free boundary
points $x_j$ and, in general,  it depends on $\{\rho_j\}$ and $x_j$.

The two propositions to follow establish an important property of the blow-up limits, namely that
the first and second blow-ups of $u$ can be obtained from \eqref{pde-0} for a suitable choice of  parameter
$\e$. 

If $\uej$ solves \eqref{pde-0} then the scaled functions $\hat u^{\e_{j}}(x)=\frac{\uejk(x_{j}+\rho_{j} x)}{m_{j}}$
verify
\begin{eqnarray}\nonumber
\Delta_p  (\hat u^{\e_{j_k}})
&=&
\frac{\rho_j^p}{m_j^{p-1}}\frac{1}{\e_j}\beta\left( \frac{u^{\e_j}(x_j+\rho_j x)}{\e_j}  \right)
=
\left[\frac{\rho_j}{m_j}\right]^p
\frac{1}{\e_j/m_j}\beta\left( \frac{\hat u^{\e_j}(x)}{\e_j/m_j}  \right)\\\label{eq-lap-scale}
&=&
\left[\frac{\rho_j}{m_j}\right]^p
\frac{1}{\delta_j}\beta\left( \frac{\hat u^{\e_j}(x)}{\delta_j}  \right), 
\end{eqnarray}
where $\delta_j=\frac{\e_j}{m_j}$.

If  $\delta_j=\frac{\e_j}{m_j}\to 0$ we see that
$ \hat u^{\e_j}$ solves $\Delta_p \hat u^{\e_j}=\left[\frac{\rho_j}{m_j}\right]^p\beta_{\delta_j}(\hat u^{\e_j})$.

\begin{prop}\label{prop-1st-blow}% First blow-up
Let $u^{\e_j}$ be a family of solutions to \eqref{pde-0}
in a domain $\mathcal D\subset \R^n$ such that
$u_{\e_j}\to u$ uniformly on $\mathcal D$ and $\e_j\to 0.$

Let $x_0\in \mathcal D\cap \fb u$ and let $x_k\in \fb u$ be such that
$x_k\to x_0$ as $k\to \infty$. 

Let $\rho_k\to 0, 
u_k(x)=1/m_ku(x_k+\rho_kx)$, $\hat u^{\e_j}_k(x)=1/m_k\uej(x_k+\rho_kx)$,  and $u_k\to U$ uniformly on compact subsets
of $\R^N$. 

Then there exists $j(k)\to \infty$ such that for every $j\ge j(k)$ 
there holds that $\e_{j}/\rho_k\to 0$ and
\begin{itemize}
\item[(i)] $\hat u^{\e_{j}}_k\to U$ uniformly on compact subsets of
$\R^n$,
\item[{(ii)}] $\na \hat u^{\e_{j}}_k\to \na U$ in $L^p_{loc}(\R^n),$
\item[(iii)] $\na u_k\to \na U$ in $L^p_{loc}(\R^n)$.
\end{itemize}
\end{prop}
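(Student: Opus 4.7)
The plan is to establish the three conclusions using Propositions \ref{prop-compactness} and \ref{prop-seq-comp} combined with a diagonal extraction in $j$ for each $k$.

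First I would dispose of (iii) directly via Proposition \ref{prop-seq-comp}. Since $u$ is the uniform limit of $u^{\e_j}$, Proposition \ref{prop-compactness}(iii) gives that $u$ is $p$-harmonic on $\{u\neq 0\}$, hence $\mu:=\Delta_p u$ is a Radon measure supported on $\fb u$. The rescalings $u_k$ therefore satisfy $\Delta_p u_k=\mu_k$ with $\mu_k$ supported on $\fb{u_k}$. A Caccioppoli-type estimate with test function $u_k\eta^p$ (the proof of Lemma \ref{lem:Cacc} goes through because $u_k\,\Delta_p u_k=0$ as a measure, $u_k$ vanishing on the support of $\mu_k$), combined with the locally uniform bound on $u_k$ inherited from $u_k\to U$, gives $\sup_k\|u_k\|_{W^{1,p}(B_R)}<\infty$ for every $R$. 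Proposition \ref{prop-seq-comp} then yields (iii).

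For (i) and (ii) the starting point is the scaling computation \eqref{eq-lap-scale}, which in the present indexing reads
\[
\Delta_p \hat u^{\e_j}_k \;=\; C_k\,\beta_{\delta_k^j}(\hat u^{\e_j}_k),\qquad C_k:=\bigl(\rho_k/m_k\bigr)^p,\qquad \delta_k^j:=\e_j/m_k.
\]
By the assumption $\sup_k \rho_k/m_k<\infty$, the constant $C_k$ is uniformly bounded, and after absorbing it into $\beta$ the rescaled equation is a singular perturbation problem of type $(\mathcal P_{\delta_k^j})$ whose reaction term satisfies \eqref{eq:beta-prpts} with constants independent of $k$. For fixed $k$, the uniform convergence $u^{\e_j}\to u$ on $x_k+\overline{B_{R\rho_k}}\Subset\mathcal D$ (valid for $j$ large) gives $\hat u^{\e_j}_k\to u_k$ uniformly on $B_R$ as $j\to\infty$; applying Proposition \ref{prop-compactness} to this family (whose perturbation parameter $\delta_k^j\to 0$ as $j\to\infty$) yields $\na\hat u^{\e_j}_k\to\na u_k$ strongly in $L^p_{loc}(\R^n)$.

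A diagonal extraction now chooses $j(k)\to\infty$ so large that
\[
\frac{\e_{j(k)}}{\rho_k}\le\frac{1}{k},\qquad \|\hat u^{\e_{j(k)}}_k-u_k\|_{L^\infty(B_k)}\le\frac{1}{k},\qquad \|\na\hat u^{\e_{j(k)}}_k-\na u_k\|_{L^p(B_k)}\le\frac{1}{k}.
\]
The first bound is the displayed claim $\e_{j(k)}/\rho_k\to 0$ and, because $m_k\ge c\rho_k$ from the standing hypothesis, it forces $\delta_k^{j(k)}\to 0$. Combined with $u_k\to U$ locally uniformly, the second bound gives (i) by the triangle inequality; combined with (iii), the third bound gives (ii). The main obstacle is the bookkeeping of this diagonal step: one must drive the singular-perturbation parameter $\delta_k^{j(k)}$ and the distance of $\hat u^{\e_{j(k)}}_k$ to $u_k$ to zero along the same diagonal sequence. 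The hypothesis $\sup_k\rho_k/m_k<\infty$ is precisely the link that couples these two requirements, while the uniform boundedness of $C_k$ is what allows the a priori estimates of Section \ref{sec:tools} to be applied to the rescaled family with constants that do not deteriorate in $k$.
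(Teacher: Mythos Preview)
Your argument is correct, but you route the three conclusions in the opposite order to the paper. The paper first establishes (i) by the same $I+II$ splitting you describe, then obtains (ii) by applying Proposition~\ref{prop-compactness} directly to the \emph{diagonal} family $\{\hat u^{\e_{j(k)}}_k\}_k$ (which solves a $(\mathcal P_{\delta_k})$-problem with $\delta_k\to 0$ and converges uniformly to $U$), and finally derives (iii) from (ii) via the change of variables
\[
\|\na u_k-\na \hat u^{\e_j}_k\|_{L^p(B_R)}^p=\Bigl(\frac{\rho_k}{m_k}\Bigr)^p\frac1{\rho_k^n}\int_{B_{\rho_kR}(x_k)}|\na u-\na u^{\e_j}|^p,
\]
which is made small by the strong $L^p_{loc}$ convergence $\na u^{\e_j}\to\na u$ from Proposition~\ref{prop-compactness}. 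You instead prove (iii) first, by recognising that the blow-ups $u_k$ themselves satisfy $\Delta_p u_k=\mu_k$ with $\mu_k$ supported on $\{u_k=0\}$ and invoking Proposition~\ref{prop-seq-comp}; then (ii) is a triangle-inequality consequence of (iii) and of the per-$k$ convergence $\na\hat u^{\e_j}_k\to\na u_k$. Your route is a bit more modular (it packages the Fatou/compactness step into the black box of Proposition~\ref{prop-seq-comp} and never unwinds the change of variables), while the paper's route is more hands-on and does not need to verify separately that $\Delta_p u$ is a nonnegative Radon measure concentrated on $\{u=0\}$.

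Two small remarks on your write-up. First, the statement asks for the conclusions to hold for \emph{every} $j\ge j(k)$, not just along the diagonal $j=j(k)$; your choice of $j(k)$ should therefore be phrased as ``choose $j(k)$ so large that for all $j\ge j(k)$ the three displayed bounds hold'', which is possible since for fixed $k$ each quantity tends to zero as $j\to\infty$. Second, the support of $\mu=\Delta_p u$ is contained in $\{u=0\}$ rather than in $\fb u$ per se; this is harmless, because all that Proposition~\ref{prop-seq-comp} (and your Caccioppoli step) actually uses is $u_k\,d\mu_k=0$, which follows from the continuity of $u_k$.
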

\begin{proof}
Our proof closely follows that of  Lemma 3.2 in \cite{CLW-uniform} where the case of $\rho_k=m_k$.
We estimate the difference 
\[
\begin{split}
\frac{\uej(x_k+\rho_k x)}{m_k}-U(x)
=
\frac{\uej(x_k+\rho_k x)}{m_k}-\frac{u(x_k+\rho_k x)}{m_k}\\
+\frac{u(x_k+\rho_k x)}{m_k}-U(x)=I+II
\end{split}
\]
Fix $r>0$, then for every $\delta>0$ and $R<\frac r{\rho_k}$
there exists $k_0=k_0(\delta, R)$  such that for $k>k_0$ there holds
\[
|II|=\left|\frac{u(x_k+\rho_k x)}{m_k}-U(x)\right|<\delta, \quad x\in B_R.
\]
Let $x\in B_r(x_0)$, then there is $j(k)$ such that $|\uej(x)-u(x)|<\frac{m_k}k$
whenever $j\ge j(k)$. This means that 
\[
|I|=\left| \frac{\uej(x_k+\rho_k x)}{m_k}-\frac{u(x_k+\rho_k x)}{m_k}\right|<\frac1k.
\]
Consequently, we have that 
\[
\left| \frac{\uej(x_k+\rho_k x)}{m_k}-U(x)\right|\le |I|+|II|\le \delta +\frac1k, \quad x\in B_R.
\]
Observe that $j(k)$ can be  chosen so large that  
$\e_{j(k)}/m_k<\frac1k.$ Hence recalling \eqref{eq-lap-scale} and applying 
Proposition \ref{prop-compactness} we get parts (i) and (ii).
\smallskip

As for (iii) we use the estimate above to get $\|\na \hat u^{\e_{j}}-\na U\|_{L^p(B_R)}<\delta$ whenever $j>j(n)$. 
We have 
\[
\begin{split}
\|\na u_k-\na U\|_{L^p(B_R)}
\le 
\|\na u_k-\na \hat u^{\e_j}_k\|_{L^p(B_R)}+
\|\na \hat u^{\e_{j}}_k-\na U\|_{L^p(B_R)}\\
\le
\|\na u_k-\na \hat u^{\e_j}_k\|_{L^p(B_R)}+\delta.
\end{split}
\]
So it remains to estimate $\|\na u_k-\na \hat u^{\e_j}_k\|_{L^p(B_R)}$ for $j>j(k)$.
Let us estimate 
\[
\begin{split}
\|\na u_k-\na \hat u^{\e_j}_k\|_{L^p(B_R)}^p
=
\left[\frac{\rho_k}{m_k}\right]^p\int_{B_R}|\na u(x_k+\rho_kx)-\na \uej(x_k+\rho_k x)|^pdx\\
=
\left[\frac{\rho_k}{m_k}\right]^p\frac1{\rho_k^n}\int_{B_{\rho_kR}(x_k)}|\na u(x)-\na \uej(x)|^pdx.
\end{split}
\]
We know that $B_{\rho_kR}(x_k)\subset B_r(x_0)$ for large $k.$
Thus there is $k_0$ large such that 
\[
\int_{B_{r}(x_0)}|\na u(x)-\na \uej(x)|^pdx\le \delta \rho_k^n. 
\]
Therefore
\[
\|\na u_k-\na U\|_{L^p(B_R)}\le \delta +\left[\frac{\rho_k}{m_k}\right]^p\delta\le 2\delta,
\]
and (iii) follows.
\end{proof}

%%%%%%%% 2nd blowup
Finally, recall  that the result of previous proposition extends to the second blow-up.
\begin{prop}\label{prop-2nd-blow}
Let $u^{\e_j}$ be a solution to \eqref{pde-0} in a domain
$\mathcal D_j\subset \mathcal D_{j+1}$ and $\cup_j \mathcal D_j=\R^N$
such that $u^{\e_j}\to U$ uniformly on compact sets of $\R^N$  and
$\e_j\to 0$. Let us assume that for some choice of positive numbers $d_n$
and points $x_n\in \fb U$, the sequence
\[U_{d_n}(x)=\frac1{d_n}U(x_n+d_nx)\]
converges uniformly on compact sets of $\R^N$ to a function $U_0$.
Let
\[(u^{\e_j})_{d_n}=\frac1{d_n}u^{\e_j}(x_n+d_nx).\]
Then there exists $j(n)\to \infty$ such that for every $j_n\ge j(n),$
there holds $\e_{j_n}/d_n\to 0$ and
\begin{itemize}
\item $(u^{\e_{j_n}})_{d_n}\to U_0$ uniformly on compact subsets of
$\R^N$,
\item $\na (u^{\e_j})_{d_n}\to \na U_0$ in $L^2_{loc}(\R^N).$\end{itemize}
\end{prop}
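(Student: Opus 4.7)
The plan is to mimic the argument of Proposition \ref{prop-1st-blow}, but now using $U$ in place of $u$ and the double blow-up $U_0$ in place of the single blow-up $U$. The structural identity behind both propositions is the scaling rule \eqref{eq-lap-scale}: if $\hat v(x) = (u^{\e_j})_{d_n}(x) = \frac{1}{d_n} u^{\e_j}(x_n + d_n x)$, then with $\rho_n = m_n = d_n$ one obtains
\[
\Delta_p \hat v = \frac{1}{\delta_{j,n}}\beta\!\left(\frac{\hat v}{\delta_{j,n}}\right), \qquad \delta_{j,n} = \frac{\e_j}{d_n},
\]
so $\hat v$ solves \eqref{pde-0} with parameter $\delta_{j,n}$. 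Hence once we select a diagonal subsequence $j_n$ with $\delta_{j_n,n} \to 0$, Proposition \ref{prop-compactness} will deliver the gradient convergence in $L^p_{loc}(\R^N)$ (the paper writes $L^2_{loc}$, but the natural space is $L^p_{loc}$ since we are dealing with the $p$-Laplacian).

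The diagonal selection is the first technical step. Fix $R > 0$ and $\delta > 0$, and split
\[
(u^{\e_j})_{d_n}(x) - U_0(x) = \big[(u^{\e_j})_{d_n}(x) - U_{d_n}(x)\big] + \big[U_{d_n}(x) - U_0(x)\big].
\]
The second bracket is bounded by $\delta$ uniformly on $B_R$ once $n \ge n_0(\delta,R)$, by the hypothesis $U_{d_n} \to U_0$. The first bracket equals $d_n^{-1}[u^{\e_j}(x_n + d_n x) - U(x_n + d_n x)]$, and since $\cup_j \mathcal{D}_j = \R^N$ and $u^{\e_j} \to U$ uniformly on every compact set of $\R^N$, we can pick $j(n)$ so large that $\e_{j(n)}/d_n < 1/n$ and simultaneously
\[
\sup_{B_{d_n R}(x_n)} |u^{\e_{j(n)}} - U| < \frac{d_n}{n}.
\]
For any $j_n \ge j(n)$ the two estimates combine to give uniform convergence of $(u^{\e_{j_n}})_{d_n}$ to $U_0$ on compact sets, together with $\delta_{j_n,n} \to 0$.

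With the diagonal subsequence in hand, the scaled functions $(u^{\e_{j_n}})_{d_n}$ are a family of solutions to \eqref{pde-0} with vanishing parameter $\delta_{j_n,n}$, bounded in $L^\infty$ (since they converge uniformly on compact sets), defined on domains that exhaust $\R^N$. Hence Proposition \ref{prop-compactness} applied on every ball $B_R$ yields $\na (u^{\e_{j_n}})_{d_n} \to \na U_0$ in $L^p_{loc}(\R^N)$, which is the second claim.

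The main obstacle is precisely the diagonal extraction: one must choose $j(n)$ so that two quantities that scale badly with $n$ — namely $\e_{j(n)}/d_n$ and $d_n^{-1}\|u^{\e_{j(n)}} - U\|_{L^\infty(B_{d_n R}(x_n))}$ — both tend to $0$. This is possible because $u^{\e_j}\to U$ uniformly on every fixed compact set of $\R^N$, not merely in $B_1$, and $\e_j \to 0$, so each quantity vanishes as $j\to\infty$ for fixed $n$. Once this is organised, everything else reduces to the already established compactness results of Section \ref{sec:tools}.
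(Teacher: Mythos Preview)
Your argument is correct and is exactly the approach one expects: the paper itself does not give a proof but simply refers to Lemma~3.3 of \cite{CLW-uniform}, whose proof is the natural adaptation of the argument for Proposition~\ref{prop-1st-blow} that you have written out. Your observation that the conclusion should read $L^p_{loc}$ rather than $L^2_{loc}$ is also right; the $L^2$ is a leftover from the linear case in \cite{CLW-uniform}.

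One small point worth tightening: as written, your choice of $j(n)$ depends on the fixed radius $R$, whereas the conclusion requires convergence on \emph{every} compact set. The standard fix is to let $R$ grow with $n$ in the diagonal selection, e.g.\ choose $j(n)$ so that $\sup_{j\ge j(n)}\e_j < d_n/n$ and $\sup_{j\ge j(n)}\sup_{B_{n d_n}(x_n)}|u^{\e_j}-U|<d_n/n$; then for any fixed $R$ the estimate on $B_R$ holds once $n>R$. This also makes clear why the conclusion holds for \emph{every} $j_n\ge j(n)$ and not merely for $j_n=j(n)$.
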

\begin{proof}
See Lemma 3.3 \cite{CLW-uniform}.
\end{proof}

%------------------------
%     SECTION
%------------------------
\section{Weak energy identity for the solutions of \eqref{pde-0} and the first domain variation}
\label{sec:weak-en}
This section contains the crucial tool for the proof of our main regularity theorem, the weak 
energy identity which we state below. 
In what follows we set $\mathcal B(t)=\int_0^t\beta(\tau)d\tau$. 
\begin{lem}
Let $\ue$ be a family of solutions to \eqref{pde-0}. For every $\phi=(\phi^1, \dots, \phi^n)\in C^1_0(B_1, \R^n)$ we have the identity.
\begin{equation}\label{putiin}
\int\left({|\na u^\e|^p}+p\mathcal B(\ue/\e)\right)\div \phi=p\sum_{m,l=1}^n\int |\na \ue|^{p-2}\p_l\ue \p_m \ue \p_l\phi^m.
\end{equation}
\end{lem}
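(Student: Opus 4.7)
The identity \eqref{putiin} is the first \emph{inner} (domain) variation of the free-boundary energy
\[
\mathcal E(v) \;=\; \int_{B_1}\Bigl(\frac{|\na v|^p}{p} + \mathcal B(v/\e)\Bigr)\,dx,
\]
whose Euler--Lagrange equation is precisely $\pl v = \beta_\e(v)$, since $\tfrac{d}{dt}\mathcal B(t/\e) = \beta_\e(t)$. I will derive it by testing $\pl\ue = \beta_\e(\ue)$ against the infinitesimal generator of the flow $x\mapsto x+s\phi(x)$, namely $\psi := \sum_{m}\phi^m\p_m\ue$; equivalently, by pulling $\ue$ back along that flow and differentiating at $s=0$. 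Both routes produce the same identity, and I'll use whichever is easier to justify rigorously.

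Direct route. Substitute $\psi=\sum_m\phi^m\p_m\ue$ into the weak form $-\int|\na\ue|^{p-2}\na\ue\cdot\na\psi\,dx=\int\beta_\e(\ue)\psi\,dx$. Expand $\p_l\psi=(\p_l\phi^m)\p_m\ue+\phi^m\p_l\p_m\ue$, and use the pointwise chain-rule identity
\[
\sum_{l=1}^n |\na\ue|^{p-2}\p_l\ue\,\p_l\p_m\ue \;=\; \tfrac{1}{p}\,\p_m|\na\ue|^p.
\]
After one integration by parts in $x_m$, the left-hand side becomes
\[
-\sum_{l,m}\int|\na\ue|^{p-2}\p_l\ue\,\p_m\ue\,\p_l\phi^m\,dx \;+\; \tfrac{1}{p}\int|\na\ue|^p\,\div\phi\,dx.
\]
For the right-hand side, write $\beta_\e(\ue)\p_m\ue=\p_m\mathcal B(\ue/\e)$, sum in $m$, and integrate by parts against $\phi^m$ to obtain $-\int\mathcal B(\ue/\e)\,\div\phi\,dx$. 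Equating and multiplying through by $p$ yields \eqref{putiin}.

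The main obstacle is regularity: to admit $\psi=\phi\cdot\na\ue$ as a test function in $W^{1,p}_0(B_1)$ we need weak second derivatives of $\ue$, which the degenerate $p$-Laplace equation does not automatically supply. Since $\beta_\e(\ue)\in L^\infty_{\mathrm{loc}}$, DiBenedetto's theory gives $\ue\in C^{1,\alpha}_{\mathrm{loc}}$ and the stress estimate $|\na\ue|^{(p-2)/2}\na\ue\in W^{1,2}_{\mathrm{loc}}$; this is enough to justify the symmetry identity above in a distributional sense and hence the direct calculation. To sidestep second-derivative estimates entirely, I would instead use the inner-variation formulation: set $v_s := \ue\circ\Phi_s^{-1}$ for $\Phi_s(x)=x+s\phi(x)$, a $W^{1,p}$-competitor equal to $\ue$ near $\p B_1$, and change variables $y=\Phi_s(x)$ to write
\[
\mathcal E(v_s) \;=\; \int_{B_1}\Bigl(\frac{|(D\Phi_s)^{-T}\na\ue|^p}{p}+\mathcal B(\ue/\e)\Bigr)\det D\Phi_s\,dx,
\]
an expression that involves only $\na\ue\in L^p$. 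Expanding $\det D\Phi_s=1+s\,\div\phi+O(s^2)$ and $(D\Phi_s)^{-T}=I-s(\na\phi)^T+O(s^2)$ and differentiating at $s=0$ reproduces, up to the factor $p$, the difference of the two sides of \eqref{putiin}; that this derivative vanishes follows from the critical-point property of $\ue$ together with $v_s-\ue\in W^{1,p}_0(B_1)$. If any scruple remains, one justifies the last step by approximating with the nondegenerate operator $\div\bigl((|\na v|^2+\delta)^{(p-2)/2}\na v\bigr)=\beta_\e(v)$, whose smooth solutions verify \eqref{putiin} classically, and passing to the limit $\delta\downarrow 0$ via $W^{1,p}_{\mathrm{loc}}$-convergence.
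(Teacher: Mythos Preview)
Your direct route is exactly the paper's proof: multiply the equation by $\phi\cdot\nabla u^\e$, use $\beta_\e(u^\e)\,\partial_m u^\e=\partial_m\mathcal B(u^\e/\e)$ on the right, and the chain-rule identity $|\nabla u^\e|^{p-2}\partial_l u^\e\,\partial_l\partial_m u^\e=\tfrac1p\partial_m|\nabla u^\e|^p$ on the left, followed by one integration by parts. The paper carries this out formally without addressing the second-derivative issue, so your added discussion of the regularity justification (via the stress-field estimate, the inner-variation reformulation, or the $\delta$-regularization) actually fills a gap the paper leaves implicit.
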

\begin{proof}
Multiply $\Delta_p\ue=\beta_\e(\ue) $ by $\p_i\ue\phi$ and integrate
to get that 
\[
\int\mathcal B(\ue/\e) \phi^i_i=-\int\beta_\e(\ue)\ue_i\phi^i.
\]
On the other hand 
\begin{eqnarray*}
\int (|\na \ue|^{p-2}\ue_j)_j\ue_i\phi^i
&=&
- \int |\na \ue|^{p-2}\ue_j(\ue_{ij}\phi^i+u_i\phi^i_j)\\
&=&
\frac1p\int|\na \ue|^p\div \phi- \int |\na \ue|^{p-2}\ue_j\ue_i\phi^i_j. 
\end{eqnarray*}

\end{proof}

Next we prove that \eqref{putiin} is preserved in the limit as $\e\to 0$. 

\begin{lem}\label{putiin-2}
There is a bounded nonnegative function $0\le \mathcal B^\ast(x)\le M$ such that for every 
vector field $X\in C_0^1(B_1, \R^n)$ we have 
\begin{equation}\label{eq:energy}
\int \left[|\na u|^p+p\mathcal B^\ast(x))\right]\div X=p\int|\na u|^{p-2}\na u \na X\na u. 
\end{equation}

\end{lem}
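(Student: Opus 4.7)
The plan is to pass to the limit $\e_j \to 0$ in the identity \eqref{putiin} applied to $\phi = X$, using the compactness established in Section \ref{sec:tools}. Each of the three terms in \eqref{putiin} requires a different convergence argument.

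First, I would handle the $\mathcal{B}(u^\e/\e)$ term, which is where the function $\mathcal{B}^*$ appearing in the statement is produced. Since $\beta \ge 0$ has support in $(0,1)$ with $\int_0^1 \beta = M$ by \eqref{eq:beta-prpts}, the primitive $\mathcal{B}(t) = \int_0^t \beta(\tau)\,d\tau$ satisfies $0 \le \mathcal{B}(t) \le M$ for every $t \in \mathbb{R}$. Consequently $\mathcal{B}(u^{\e_j}/\e_j)$ is uniformly bounded in $L^\infty(B_1)$, and by Banach--Alaoglu we can extract a further subsequence (still indexed by $\e_j$) with
\[
\mathcal{B}(u^{\e_j}/\e_j) \overset{\ast}{\rightharpoonup} \mathcal{B}^*(x) \quad \text{in } L^\infty(B_1),
\]
for some measurable $\mathcal{B}^*$ satisfying $0 \le \mathcal{B}^* \le M$ a.e. Since $\div X \in L^1(B_1)$, this yields $\int \mathcal{B}(u^{\e_j}/\e_j)\,\div X \to \int \mathcal{B}^*\,\div X$.

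For the remaining two terms I would use the strong $L^p_{loc}$ convergence $\nabla u^{\e_j} \to \nabla u$ supplied by Proposition \ref{prop-compactness}(ii). This immediately gives $|\nabla u^{\e_j}|^p \to |\nabla u|^p$ in $L^1_{loc}$, which settles the first term on the left-hand side. For the right-hand side, after passing to an a.e.-convergent subsequence the pointwise bound
\[
\bigl||\nabla u^{\e_j}|^{p-2}\partial_l u^{\e_j}\,\partial_m u^{\e_j}\bigr| \le |\nabla u^{\e_j}|^p,
\]
together with the $L^p_{loc}$ convergence of $|\nabla u^{\e_j}|$, gives the equi-integrability hypothesis of Vitali's theorem on any compact subset. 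Hence
\[
|\nabla u^{\e_j}|^{p-2}\partial_l u^{\e_j}\,\partial_m u^{\e_j} \longrightarrow |\nabla u|^{p-2}\partial_l u\,\partial_m u \quad \text{in } L^1_{loc}(B_1),
\]
and multiplying by the bounded continuous function $\partial_l X^m$ and summing in $l,m$ gives convergence of the right-hand side of \eqref{putiin} to $p\int |\nabla u|^{p-2}\nabla u\,\nabla X\,\nabla u$. Combining these three convergences with \eqref{putiin} produces the identity \eqref{eq:energy}.

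The main obstacle is the cubic-in-gradient term on the right-hand side: weak convergence of $\nabla u^{\e_j}$ alone would be insufficient to identify the limit of $|\nabla u^{\e_j}|^{p-2}\partial_l u^{\e_j}\partial_m u^{\e_j}$ with the same expression in $u$. It is precisely the strong $L^p_{loc}$ convergence from Proposition \ref{prop-compactness}, which uses the sign condition $\ue \beta_\e(\ue) \ge 0$ and the $L^\infty$ bound on $\ue$, that enables the Vitali-type argument above and closes the passage to the limit.
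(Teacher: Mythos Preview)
Your proof is correct and follows essentially the same route as the paper: weak-$\ast$ compactness in $L^\infty$ for $\mathcal B(u^{\e_j}/\e_j)$, strong $L^p_{loc}$ convergence of gradients for the $|\nabla u^{\e_j}|^p$ term, and an equi-integrability (Vitali) argument for the cubic term using the domination $\bigl||\nabla u^{\e_j}|^{p-2}\partial_l u^{\e_j}\partial_m u^{\e_j}\bigr|\le |\nabla u^{\e_j}|^p$ together with the $L^1_{loc}$ convergence of $|\nabla u^{\e_j}|^p$. The paper writes out the equi-integrability estimate explicitly rather than invoking Vitali by name, but the content is the same.
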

\begin{proof}
We have $\mathcal B(\ue/\e)\to \mathcal B^\ast(x)$ $\ast$-weakly in $L^\infty_{loc}$.

By strong convergence of gradients, Proposition \ref{prop-comp} (iii),
\[
\int \left[|\na \ue|^p+p\mathcal B(\ue/\e))\right]\div X\to \int \left[|\na u|^p+p\mathcal B^\ast(x))\right]\div X.
\]
Let us show that the functions $|\na \uej|^{p-2}\na \uej \na X\na \uej$  are equiintegrable.
Given $\sigma>0$,  then there is $j_0$ and $\delta >0$ such that 
\[
\int_E|\na \uej|^{p-2}\left|\na \uej \na X\na \uej\right|<\sigma,
\]
whenever $|E|<\delta$ and $j>j_0$.
Indeed, we have
\[
\int_E|\na \uej|^{p-2}\left|\na \uej \na X\na \uej\right|\le \|\na X\|_\infty\left[\int_E|\na \ue|^p -\int_E|\na u|^p\right]
+
 \|\na X\|_\infty\int_E|\na u|^p.
\]
Choose $j_0$ so large that $\left|\int_E|\na \ue|^p -\int_E|\na u|^p\right|<\frac\sigma{2\|\na X\|_\infty}$ (which is possible thanks to Proposition \ref{prop-comp} (iii))
and then by the absolute continuity of the integral of $|\na u|^p$ we can choose 
$\delta$ so small that $\int_E|\na u|^p<\frac\sigma2$. Hence the desired result follows. 
\end{proof}

%------------------------
%     SECTION
%------------------------
\subsection*{Domain variation formula for minimizers}
Let $\lambda>0$ be a constant. We show that the local minimizers of 
\begin{eqnarray}\nonumber
 J_p(u)=\int_{\Omega}|\nabla u|^p+\lambda^p\I{u}, 
\end{eqnarray}
satisfy 
\begin{equation}\label{putiin-3}
p\int_\Omega\left\{|\nabla u(y)|^{p-2}\cdot \na u(y)\na \phi(y)- \big[|\nabla u(y)|^p+\lambda(u)\big]\div\phi\right\}dy=0, 
\end{equation}
where, for the sake of simplicity, we set $\lambda(u)=\lambda^p \I{u}$.
The identity \eqref{putiin-2} is weaker than 
\eqref{putiin-3} since we do not know the explicit form of $\mathcal B^*$.

\begin{lem}
 Let $u$ be a local minimizer of $J_p(\cdot)$, then \eqref{putiin-3} holds. 

\end{lem}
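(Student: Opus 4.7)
The plan is a standard inner variation (domain deformation) argument. Given a test field $\phi\in C^1_0(\Omega,\mathbb{R}^n)$, I would introduce the one-parameter family of diffeomorphisms
\[
\tau_t(y)=y+t\phi(y),\qquad |t|<t_0,
\]
which for $t_0$ small is a $C^1$ diffeomorphism of $\Omega$ onto itself and equals the identity outside $\supp\phi$. Setting $u_t(x)=u(\tau_t^{-1}(x))$ produces a family of admissible competitors in $W^{1,p}(\Omega)$, all agreeing with $u$ outside a compact set, so by local minimality $J_p(u_t)\ge J_p(u)$ for small $|t|$. Identity \eqref{putiin-3} will then fall out by showing that $t\mapsto J_p(u_t)$ is differentiable at $0$ and computing the derivative, which must vanish since one may replace $\phi$ by $-\phi$.

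The computation is carried out by changing variables $x=\tau_t(y)$ in the definition of $J_p(u_t)$. Two observations organise everything: by the chain rule, $\nabla_x u_t(\tau_t(y))=(D\tau_t(y))^{-T}\nabla_y u(y)$, and the level sets transform as $\{u_t>0\}=\tau_t(\{u>0\})$, so $\I{u_t(\tau_t(y))}=\I{u(y)}$. This last fact is what makes the argument work cleanly despite the non-smoothness of the indicator: the free boundary energy is simply pulled back and multiplied by the Jacobian. Consequently,
\[
J_p(u_t)=\int_\Omega\bigl[\,|(D\tau_t(y))^{-T}\nabla u(y)|^p+\lambda^p\I{u(y)}\,\bigr]\det D\tau_t(y)\,dy.
\]
Expanding $D\tau_t=I+tD\phi$ to first order gives $\det D\tau_t=1+t\,\div\phi+O(t^2)$ and $(D\tau_t)^{-T}=I-t(D\phi)^T+O(t^2)$, from which a short algebraic computation yields
\[
|(D\tau_t)^{-T}\nabla u|^p=|\nabla u|^p-pt\,|\nabla u|^{p-2}\!\sum_{l,m}\partial_l u\,\partial_l\phi^m\,\partial_m u+O(t^2).
\]

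Multiplying the two expansions and differentiating at $t=0$ gives
\[
\frac{d}{dt}J_p(u_t)\Big|_{t=0}=\int_\Omega\Bigl[(|\nabla u|^p+\lambda^p\I{u})\,\div\phi-p\,|\nabla u|^{p-2}\!\sum_{l,m}\partial_l u\,\partial_l\phi^m\,\partial_m u\Bigr]dy,
\]
and local minimality forces this to equal zero, which is precisely \eqref{putiin-3}. The only technicality is justifying differentiation under the integral sign, but this is routine: since $D\tau_t-I$ is compactly supported in $\supp\phi$ with uniformly bounded entries for $|t|\le t_0$, the $L^p$ function $|\nabla u|^p$ (together with $\lambda^p$) supplies an integrable majorant for the remainder terms. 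There is no genuine analytic obstacle here; the argument is a variational identity, parallel to \eqref{putiin} at the level of weak solutions but sharper because the explicit form $\lambda(u)=\lambda^p\I{u}$ replaces the limit object $\mathcal B^\ast$.
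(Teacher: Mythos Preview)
Your argument is correct and follows essentially the same domain-variation route as the paper. The only cosmetic difference is that the paper composes with the forward map, $u_t(x)=u(x+t\phi(x))$, and then changes variables $y=x+t\phi(x)$ (so the Jacobian appears as $1-t\,\div\phi+o(t)$), whereas you compose with the inverse map $u_t=u\circ\tau_t^{-1}$ and change variables $x=\tau_t(y)$; both expansions collapse to the same first-order identity, and your observation $\{u_t>0\}=\tau_t(\{u>0\})$ is exactly the paper's remark that $\lambda(u_t(x))=\lambda(u(y))$ after the change of variables.
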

\begin{proof}
Let $f(\xi)=|\xi|^p$.
For $\phi\in C^{0,1}_0(\Omega, \R^n)$ we put  $u_t(x)=u(x+t\phi(x))$,
with small $t\in\R$. Then
$\phi_t(x)=x+t\phi(x)$ maps $\Omega$ into itself. After change of variables $y=x+t\phi(x)$ we infer

\begin{eqnarray}\label{J-t}
\lefteqn{\int_\Omega \bigg[f(\nabla u_t(x))+\lambda(u_t(x))\bigg]dx=}\\\nonumber &
&=\int_\Omega\bigg[f(\nabla u_t(\phi^{-1}_t(y)))+
\lambda(u(y))\bigg]\bigg[1-t\div(\phi(\phi^{-1}_t(y)))+o(t)]\bigg]dy.\nonumber
\end{eqnarray}
Here we used the inverse mapping theorem for $\phi_t:x\to y$, in particular a well-known identity
\begin{eqnarray}
\left|\frac{D(x_1,\dots,x_n)}{D(y_1,\dots,y_n)}\right|=
\left|{\frac{D(y_1,\dots,y_n)}{D(x_1,\dots,x_n)}}\right|^{-1}=\frac{1}{1+t\div\phi+o(t)}.\nonumber
\end{eqnarray}
One can easily verify that
\begin{eqnarray}\nonumber
 \na u_{t}(x)=\na u(\phi_t(x))\left\{\mathbb I+ t\na \phi(x)\right\}
\end{eqnarray}
 with $\mathbb I=\{\delta_{ij}\}$ being the identity matrix. Hence

\begin{eqnarray}\nonumber
 \nabla u_t(\phi^{-1}_t(y))=\na u(y)\left\{\mathbb I+ t\na \phi(\phi^{-1}_t(y))\right\}.
\end{eqnarray}
and, moreover, 
\[
f(\na_x u_t(x))=f(\na_x u(y))+t\na _\xi f(\na_x u(y))\na_x u(y)\na \phi(\phi^{-1}_t(y))+o(t).
\]
This in conjunction with (\ref{J-t}) yields 
\begin{eqnarray}\nonumber
\int_\Omega
\Bigg\{\left[{\na f_\xi}
\big(\nabla u\left\{\mathbb I+ t\na \phi(\phi_t^{-1}(y))\right\} \big)
\cdot 
\na u(y)\na \phi(\phi^{-1}_t(y))\right]
\bigg[1-t(\div_x\phi)(\phi^{-1}_t(y))+o(t)]\bigg]\\\nonumber
-\bigg[f(\nabla u_t(\phi^{-1}_t(y)))+\lambda(u(y))\bigg]\bigg[(\div_x \phi)(\phi^{-1}_t(y))+o(1)]\bigg]
\Bigg\}dy
\longrightarrow\\\nonumber
\longrightarrow \int_\Omega\left\{{\na_\xi f}(\nabla
u)\cdot \na u(y)\na \phi(y)- \big[f(\nabla
u(y))+\lambda(u)\big]\div\phi\right\}dy=0.
\end{eqnarray}
This completes the proof of \eqref{putiin-3}.
\end{proof}

\medskip

It is convenient to introduce the {\it variational solutions} of the free boundary
problem %
\begin{eqnarray}\label{E-L-p}
&\Delta_pu=0\ \mbox{in}\  \Omega\cap\{u>0\}\cup\{u<0\},%
\\\nonumber
 & |\na u^+|^p-|\na u^-|^p=\frac{\lambda^p}{p-1}\  \mbox{on}\  \Omega\cap \partial \{u>0\}.
\end{eqnarray}

\smallskip

\begin{defn}
Let $f(\xi)=|\xi|^p$, then a function
$u\in W^{1, p}(\Omega)$ is said to be a variational
solution of (\ref{E-L-p}) in some domain $\Omega$ if $\Delta_pu=0$ in 
$ \Omega\cap\{u>0\}\cup\{u<0\}$ in weak sense,  and
for any $\phi\in C^{0,1}_0(\Omega, \R^n)$
\begin{eqnarray}\label{stat-point}
 \int_\Omega\left\{{\na_\xi f}(\nabla
u)\cdot \na u(y)\na \phi(y)- \big[f(\nabla
u(y))+\lambda(u)\big]\div\phi\right\}dy=0.
\end{eqnarray}
\end{defn}

\begin{rem}
 By inspecting the proof of Theorem \ref{thm-lip} one can see 
 that the local Lipschitz estimate is valid for the variational solutions in $B_1$ provided that 
 $p>n$, {\bblue the latter is a technical condition to assure uniform continuity of 
 $u^\e$, see the discussion in Section \ref{sec:tools} and Remark \ref{xrusch}}. Also observe  that every stationary point of
$J_p$ is a variational solution as the above computation shows.
\end{rem}

%------------------------
%     SECTION
%------------------------
\section{Viscosity solutions}\label{sec:visc-space}

\begin{thm}\label{thm-entropy}
Let $\uej\in W^{1,p}_{loc}(\R^n), \na \uej\in BMO_{loc}(\R^n)$ solve 
$\Delta_p \uej=\beta_j(\uej)$ with $\beta_j(t)=\sigma_j\frac1{\e_j}\beta(\frac t{\e_j})$
and $\sigma_j\downarrow  0$. 
Let $\uej\to u$ in $W^{1,p}_{loc}(\R^n), \na u\in BMO_{loc}(\R^n)$ such that $\Delta_p u=0$ in $\po u\cup\{u<0\}$. 
Then 
$\Delta_p u=0$ in $\R^n.$
\end{thm}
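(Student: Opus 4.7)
The plan is to derive a limit version of the weak energy identity \eqref{putiin}, but for the rescaled equation $\Delta_p u^{\e_j} = \sigma_j\beta_{\e_j}(u^{\e_j})$, and then exploit the BMO regularity of $\na u$ to extend $p$-harmonicity across the free boundary $\{u=0\}$. First I would repeat the derivation of \eqref{putiin}: testing the equation against $\partial_i u^{\e_j}\phi^i$ and integrating by parts yields
\[
\int\left[|\na u^{\e_j}|^p + p\sigma_j\mathcal B(u^{\e_j}/\e_j)\right]\div\phi = p\sum_{l,m}\int|\na u^{\e_j}|^{p-2}\partial_l u^{\e_j}\partial_m u^{\e_j}\partial_l\phi^m.
\]
Since $\|\mathcal B\|_{L^\infty}\le M$ and $\sigma_j\downarrow 0$, the $\mathcal B$-term vanishes uniformly. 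The hypothesis $u^{\e_j}\to u$ strongly in $W^{1,p}_{loc}$ (together with the equi-integrability argument from Lemma \ref{putiin-2}) then lets both sides pass to the limit, yielding
\[
\int|\na u|^p\div\phi = p\int|\na u|^{p-2}\na u\,\na\phi\,\na u, \qquad \phi\in C^1_0(\R^n,\R^n),
\]
which is exactly \eqref{fargo}.

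Next, \eqref{fargo} is equivalent to the distributional divergence-free condition $\partial_l T_{lm}=0$ for each $m$, where $T_{lm}=p|\na u|^{p-2}u_l u_m-|\na u|^p\delta_{lm}$ is the stress tensor introduced in the notation. A direct classical computation gives $\partial_l T_{lm}=p u_m\Delta_p u$, so $\Delta_p u=0$ would follow formally. To make this rigorous I would use the BMO hypothesis: by John--Nirenberg, $\na u\in L^q_{loc}$ for every $q<\infty$, so $T_{lm}$ and all relevant products belong to $L^q_{loc}$. The assumption that $u$ is $p$-harmonic on $\po u\cup\{u<0\}$ means that $\Delta_p u$ is a distribution supported on $\{u=0\}$. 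Mollifying $u$ to $u^\delta=u*\rho_\delta$, transferring the tensor identity to the mollified level, and using the BMO-continuity of the associated commutators as $\delta\to 0$, one extracts the distributional identity $u_m\Delta_p u=0$ for each $m$. Combined with Stampacchia's theorem (namely $\na u=0$ a.e. on $\{u=0\}$) and a standard test-function argument, this forces $\Delta_p u=0$ in $\R^n$.

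The hard part will be the distributional product argument in the second step: without $C^{1,\alpha}$ regularity of $u$ across the free boundary the chain rule that identifies $\partial_l T_{lm}$ with $p u_m\Delta_p u$ is only formal, and the commutator errors introduced by mollification must be controlled uniformly in $\delta$ by the BMO seminorm of $\na u$. This is precisely where the hypothesis $\na u\in BMO_{loc}$ enters: the $L^q$-bounds supplied by John--Nirenberg are what replace the pointwise regularity on $\{u=0\}$ that one would otherwise need, and Stampacchia's theorem then performs the final reduction from $u_m\Delta_p u=0$ to $\Delta_p u=0$.
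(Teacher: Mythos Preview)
Your first step --- passing to the limit in the weak energy identity to obtain \eqref{fargo} --- is correct and is also how the paper sets things up. The gap is in the second step. The chain-rule identity $\partial_l T_{lm} = p\,u_m\Delta_p u$ is only formal across $\{u=0\}$, but the real obstruction is more basic: the three properties you retain (\eqref{fargo}, $\nabla u\in BMO_{loc}$, and $p$-harmonicity on $\{u\neq 0\}$) are \emph{not} enough to conclude $\Delta_p u=0$. Take $u(x)=\alpha|x_1|$ with $\alpha>0$. Then $\nabla u\in L^\infty\subset BMO_{loc}$, $u$ is $p$-harmonic on $\{u>0\}=\{x_1\neq 0\}$, and $T_{lm}(\nabla u)$ is the constant diagonal matrix with entries $(p-1)\alpha^p,-\alpha^p,\dots,-\alpha^p$, so $\partial_l T_{lm}=0$ and \eqref{fargo} holds exactly. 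Yet $\Delta_p u=2\alpha^{p-1}\mathcal H^{n-1}\restr\{x_1=0\}\neq 0$. Stampacchia's lemma is no help here: $\{u=0\}=\{x_1=0\}$ is Lebesgue-null, so ``$\nabla u=0$ a.e.\ on $\{u=0\}$'' is vacuous and says nothing about the (measure-valued) product $u_m\Delta_p u$. In short, by passing to \eqref{fargo} and then discarding the approximants you have thrown away exactly the information needed to rule out the wedge $\alpha|x_1|$; no commutator estimate can recover it.

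The paper does not try to deduce $p$-harmonicity from \eqref{fargo} directly. It argues via viscosity touching: a barrier together with the BMO bound on $\na u$ shows that at any free boundary point touched from $\{u>0\}$ the blow-up is either linear or equals $\alpha|x_1|$ in suitable coordinates, and the whole task is to force $\alpha=0$. For this the paper goes \emph{back} to the $\e_j$-level. Using Propositions~\ref{prop-1st-blow}--\ref{prop-2nd-blow} and the $x_1$-symmetric construction from \cite{CLW-uniform}, one may assume the $u^{\e_j}$ themselves are $x_1$-symmetric and converge to $\alpha|x_1|$. Integrating the pointwise identity $\partial_k T_{1k}(\nabla u^{\e_j})=\partial_1\mathcal B(u^{\e_j}/\e_j)$ over the half-slab $\{0<x_1<1,\ |x'|<t\}$ and applying the divergence theorem, the boundary contribution on $\{x_1=0\}$ becomes $\int(\mathcal B_j+|\nabla_{x'}u^{\e_j}|^p)\ge 0$ (here $\partial_1 u^{\e_j}=0$ by symmetry) and can be dropped with the correct sign. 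After averaging in $t$ and sending $j\to\infty$, the surviving boundary terms give $(p-1)\alpha^p\le 0$, hence $\alpha=0$. The decisive ingredient is the nonnegativity of $\mathcal B_j$ on the symmetry plane at the $\e_j$-level --- precisely the structure that vanishes when one passes to the limit identity \eqref{fargo}.
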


\begin{proof}
Let $u(x_0)>0$ and let $B_r(x_0)\subset \{u>0\}$ such that $y_0\in \p B_r(x_0)$
for some $y_0\in \fb u$.
Then $u^-$ is bounded by the barrier $b$,  $\Delta_p b=0$ in $B_{2r}(x_0)\setminus 
B_r(x_0)$, $b=0$ on $\p B_r(x_0)$ and $b=\max_{B_{2r}(x_0)} u^-$ on $\p B_{2r}(x_0)$.  For any $s\in (0, r)$ we have 
\[
\fint_{B_s(y_0)} u=\int_0^s\left[\fint_{B_\tau(y_0)}\na u(x)\cdot \frac {x-y_0}{|x-y_0|}dx\right]d\tau=
\int_0^s\fint_{B_\tau(y_0)}\pscal{\na u(x)-\fint_{B_\tau(x_0)} \na u}{\frac {(x-y_0)}{|x-y_0|}}dxd\tau, 
\]
{\bblue where the last equality follows from the observation that 
$\fint_{B_\tau(y_0)}\pscal{e}{\frac {(x-y_0)}{|x-y_0|}}dx =0
$ for any fixed vector $e$. }
Consequently, 
\begin{eqnarray}
\fint_{B_s(y_0)} u^+
&=&
\fint_{B_s(y_0)} u^- -\int_0^s\fint_{B_\tau(y_0)}\pscal{\na u(x)-\fint_{B_\tau(x_0)} \na u}{ \frac{ (x-y_0)}{|x-y_0|}}dxd\tau\\\nonumber
&\le& 
\fint_{B_s(x_0)} b+s\|\na u\|_{BMO}
\\\nonumber
&\le&
s(\|\na b\|_\infty+ \|\na u\|_{BMO}).
\end{eqnarray}
Thus if $u_0$ is a blow-up $u$ at $y_0$ then from Lamma 4.3 \cite{DK} (see also 
Lemma \ref{lemma:linear})   
either $u_0(x)=\alpha|x_1|,$ for some  $\alpha\ge 0$ in a suitable coordinate system or $u_0$ is linear. Thus to show that $u$ is a viscosity solution at 
$y_0$ it is enough to conclude that $\alpha=0$.

We can use the construction in \cite{CLW-uniform} of $x_1$ symmetric solution and assume that 
$\uej$ is $x_1$ symmetric.  Hence, thanks to Propositions \ref{prop-1st-blow}
and \ref{prop-2nd-blow},  from now on we assume that 
$\uej$ is a family of solutions such that $\uej$ converges to $\alpha|x_1|$ is some suitable coordinate system. We claim that $\alpha=0$. 
To see this we observe that 
\begin{equation}
\p_k(T_{1k})=\p_j(p|\na \uej |^{p-2}\p_k\uej \p_1\uej-\delta_{1, k}|\na \uej |^p)=\p_1\mathcal B. 
\end{equation}
Let us denote
\[
\mathcal R_t=\{(x_1, x') : 0<x_1, |x'|<t \},
\]
and
\begin{equation}
E_j\eqdef \int_{\mathcal R_t} p\p_1(|\na \uej|^{p-2}(\p_1\uej)^2-|\na \uej|^p)- \p_1\mathcal B_j(\uej)
=-p\int_{\mathcal R_t}\p_j\left[ \p_1\uej\sum_{k=2}^n |\na \uej|^{p-2}\p_k\uej\right]\eqdef -G_j.
\end{equation}
From the divergence theorem and $\p_1 \uej=0$ on $x_1=0$, we get 
\begin{eqnarray*}
E_j
&=&
\int_{\{x_1=0\}\cap \mathcal R_t}(-\mathcal B_j-|\na _{x'}\uej|^{p})(-1)+
\int_{\{x_1=1\}\cap \mathcal R_t}(p|\na \uej|^{p-2}(\p_1\uej)^2-\mathcal B_j-|\na \uej|^p)\\
&\ge&
\int_{\{x_1=1\}\cap \mathcal R_t}(p|\na \uej|^{p-2}(\p_1\uej)^2-\mathcal B_j-|\na \uej|^p).
\end{eqnarray*}
On the other hand 
\begin{eqnarray*}
-G_j=-p\int_{\{|x'|=t\}}\left[ \p_1\uej\sum_{k=2}^n |\na \uej|^{p-2}\p_k\uej\right]\nu^k\le p\int_{\{|x'|=t\}}\left[ |\p_1\uej|\sum_{k=2}^n |\na \uej|^{p-2}|\p_k\uej|\right].
\end{eqnarray*}
Integrate the inequality 
\[
\int_{\{x_1=1\}\cap \overline{\mathcal R_t}}(p|\na \uej|^{p-2}(\p_1\uej)^2-\mathcal B_j-|\na \uej|^p)\le p\int_{\{|x'|=t\}}\left[ |\p_1\uej|\sum_{k=2}^n |\na \uej|^{p-2}|\p_k\uej|\right]
\]
over $t\in [1-\delta, 1]$ and using the co-area  formula we conclude that 
\begin{eqnarray}\label{eq:impichment}
\int_{[1-\delta, 1]}dt \int_{\{x_1=1\}\cap \overline{\mathcal R_t}}(p|\na \uej|^{p-2}(\p_1\uej)^2-\mathcal B_j-|\na \uej|^p)\\\nonumber
\le 
p\int_{(B_1'\setminus B_{1-\delta}')\times [0, 1]}\left[ |\p_1\uej|\sum_{k=2}^n |\na \uej|^{p-2}|\p_k\uej|\right].
\end{eqnarray}
Note that 
\[
p|\na \uej|^{p-2}(\p_1\uej)^2-\mathcal B_j-|\na \uej|^p\to (p-1)\alpha^p, 
\]
whereas 
\[
|\p_1\uej|\sum_{k=2}^n |\na \uej|^{p-2}|\p_k\uej|\to 0
\]
pointwise in $x_1>0$. Hence from \eqref{eq:impichment} it follows that 
\[
(p-1)\alpha^p\le 0, 
\]
and the claim follows. Thus $u$ is a viscosity solution of $\Delta_p u=0$ in $\R^n$.
\end{proof}

%------------------------
%     SECTION
%------------------------
\section{Dyadic scaling: Flatness vs linear growth}
\label{sec:dyadic}
\subsection{Slab-flatness}
This and next sections contain the main ingredients for the proof of the 
local Lipschitz estimate. 
The free boundary points can be characterized by the modulus of 
continuity $\delta $ of the slab flatness at $x_0\in \fb u$ and the 
Labasgue density $\Theta $ of $\{u<0\}$. The good points for the linear growth 
are those where $\delta$ and $\Theta$ are not very small. In this
section we deal with the points where 
$\delta$ is not small, and we show that 
at such points $u$ grows linearly.  

In order to formulate the main result of this section we introduce the 
notion of slab flatness for $\fb u$.

Let $x_0\in \fb u$ and
\begin{equation}
\mathcal S(h; x_0, \nu):=\{x\in \R^n : -h<(x-x_0)\cdot\nu<h\}
\end{equation}
be the slab of height $2h$ in unit direction $\nu$. 
Let $h_{\min}(x_0, r, \nu)$ 
be the the distance of two parallel planes  with unit direction $\nu$ 
containing the free boundary  $\fb u$ in $B_r(x_0)$, i.e.
\begin{equation}
h_{\min}(x_0, r, \nu):=\inf\{h : \partial\{u>0\}\cap B_r(x_0)\subset S(h; x_0, \nu)\cap B_r(x_0)\}.
\end{equation}
Finally,  let 
\begin{equation}\label{min-h}
h(x_0, r):=\inf_{\nu\in \mathbb S^N}h_{\min}(x_0, r, \nu).
\end{equation}
Note that  $h(x_0, r)$ is {\bf non-decreasing} in $r$. 
We call $h(x_0, r)/2$ {\bf the slab flatness constant} at scale $r>0$.

\smallskip

\medskip
\subsection{Optimal growth}
\begin{prop}\label{prop:lin flat}
Let~$\uej$ be a  family of solutions  to \eqref{pde-0}, such that $\uej\to u$
locally uniformly in $B_{1}$.  
Let $x_0\in\Gamma\cap B_{1/2}$. For any~$k\in\N$, set
$$
S(k,u):=\sup_{ { { B_{2^{-k}} (x_0)}}}|u|.
$$
 
If $h_0>0$ is fixed and $h\left(x_0,\frac1{2^k}\right)\ge \frac{h_0}{2^{k+1}}$ for some~$k$, then 
\begin{equation}\label{discrete-linear}
S(k+1,u)\leq\max\left\{\frac{L2^{-k}}{2},\frac{S(k,u)}{2},\dots, 
\frac{S(k-m,u)}{2^{m+1}},\dots, \frac{S(0,u)}{2^{k+1}}\right\},
\end{equation}
for some positive constant $L$, that is independent of~$x_0$ and $k$. 
\end{prop}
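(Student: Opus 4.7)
The plan is to argue by contradiction via a blow-up. Suppose that for every $L > 0$ the inequality fails. Choosing $L_\nu \to \infty$ yields counterexample data: solutions $u_\nu = \lim_{j \to \infty} u^{\e_j}_\nu$ of \eqref{pde-0}, free boundary points $x_\nu$, and scales $k_\nu$ with $h(x_\nu, 2^{-k_\nu}) \ge h_0/2^{k_\nu+1}$ and, writing $M_\nu := S(k_\nu+1, u_\nu)$,
\[
M_\nu > L_\nu \cdot 2^{-(k_\nu+1)}, \qquad S(k_\nu - m, u_\nu) < 2^{m+1} M_\nu \quad (m = 0, \dots, k_\nu).
\]
Introduce the rescalings
\[
\tilde v_\nu(x) := M_\nu^{-1} u_\nu(x_\nu + 2^{-k_\nu} x), \qquad \hat u_\nu(x) := M_\nu^{-1} u^{\e_{j_\nu}}_\nu(x_\nu + 2^{-k_\nu} x),
\]
so that $\hat u_\nu$ satisfies $\Delta_p \hat u_\nu = \sigma_\nu^p \beta_{\delta_\nu}(\hat u_\nu)$ with $\sigma_\nu := 2^{-k_\nu}/M_\nu < 2/L_\nu \to 0$ and $\delta_\nu := \e_{j_\nu}/M_\nu$, while $\tilde v_\nu(0) = 0$, $\sup_{B_{1/2}}|\tilde v_\nu| = 1$, and $\sup_{B_{2^m}}|\tilde v_\nu| < 2^{m+1}$ for $m = 0, 1, \dots, k_\nu$.

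Via Propositions \ref{prop-1st-blow} and \ref{prop-2nd-blow} we diagonally select $j_\nu \to \infty$ so that $\delta_\nu \to 0$ and, along a subsequence, $\tilde v_\nu \to v_0$ locally uniformly and in $W^{1,p}_{loc}(\R^n)$, with $\Delta_p v_0 = 0$ on $\po{v_0} \cup \{v_0 < 0\}$ (Proposition \ref{prop-compactness}). Because $\na \tilde v_\nu(x) = \sigma_\nu \, \na u_\nu(x_\nu + 2^{-k_\nu}x)$, a change of variables gives $[\na \tilde v_\nu]_{BMO(B_R)} = \sigma_\nu \, [\na u_\nu]_{BMO(B_{R \cdot 2^{-k_\nu}}(x_\nu))} \le C \sigma_\nu \to 0$ thanks to the BMO hypothesis of Theorem \ref{thm-A}. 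This places $v_0$ in the BMO framework of Theorem \ref{thm-entropy}, and since $\sigma_\nu \to 0$ supplies the decaying prefactor, we conclude $\Delta_p v_0 = 0$ on all of $\R^n$. Combined with the linear growth $|v_0(x)| \le 2|x|$ and a Liouville-type theorem for $p$-harmonic functions of at most linear growth, $v_0$ must be affine. Since $v_0(0) = 0$ and $\sup_{B_{1/2}}|v_0| = 1$, we obtain $v_0(x) = 2 \nu_0 \cdot x$ for some unit vector $\nu_0$, so $\{v_0 = 0\}$ is the hyperplane $\nu_0^\perp$.

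The concluding step is to transfer the slab-flatness lower bound to the limit, and this is where the main care is required. For every unit vector $\mu$ the hypothesis supplies a free boundary point $y_\nu(\mu) \in \fb{u_\nu} \cap \overline{B}_{2^{-k_\nu}}(x_\nu)$ with $|(y_\nu(\mu) - x_\nu) \cdot \mu| \ge h_0/2^{k_\nu+1}$. The rescaled points $z_\nu(\mu) := 2^{k_\nu}(y_\nu(\mu) - x_\nu)$ lie in $\overline{B}_1$, satisfy $\tilde v_\nu(z_\nu(\mu)) = 0$, and obey $|z_\nu(\mu) \cdot \mu| \ge h_0/2$. Passing to a convergent subsequence, the uniform convergence yields $z(\mu) \in \overline{B}_1$ with $v_0(z(\mu)) = 0$ and $|z(\mu) \cdot \mu| \ge h_0/2$. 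Specializing $\mu = \nu_0$, the first condition forces $z(\nu_0) \cdot \nu_0 = 0$ while the second requires $|z(\nu_0) \cdot \nu_0| \ge h_0/2 > 0$, which is the contradiction, pinning down the constant $L$. The principal obstacle is the simultaneous diagonal extraction delivering $\sigma_\nu, \delta_\nu \to 0$ while preserving both the linear-growth bounds on $\tilde v_\nu$ and the slab-flatness lower bound on the approximate free boundaries in the limit.
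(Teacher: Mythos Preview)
Your proof is correct and follows essentially the same approach as the paper: a blow-up contradiction, extraction of an entire $p$-harmonic limit via Theorem~\ref{thm-entropy} and the BMO hypothesis, Liouville's theorem to force linearity, and a contradiction with the slab-flatness lower bound carried to the limit by uniform convergence. Your use of the scale-invariance of the BMO seminorm (yielding $[\nabla \tilde v_\nu]_{BMO}\le C\sigma_\nu\to 0$) and your extraction of the contradicting free-boundary point by specializing $\mu=\nu_0$ are minor presentational variants of the paper's corresponding steps.
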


\begin{proof} 
Suppose that the assertion of proposition is false.
Then  
there exist integers~$k_j, j=1,2,\ldots$, limits  ~$u_j,\sup_j|u_j|\le 1$ (i.e. $u_j=\lim_{k\to \infty} u^{\e_k(j)}$
where $\{u^{\e_k(j)}\}_{k=1}^\infty$ are solutions to \eqref{pde-0} for each $j$ fixed)
and points~$x_j\in\Gamma_j\cap B_1$ such that 
\begin{equation}\label{level}
h\left(x_j,\frac1{2^{k_j}}\right)\ge \frac{h_0}{2^{k_j+1}}
\end{equation}
and
\begin{equation}\label{neg}
S(k_j+1,u_j)>\max\left\{\frac{j2^{-k_j}}{2},\frac{S(k_j,u_j)}{2},\dots, 
\frac{S(k_j-m,u_j)}{2^{m+1}},\dots,\frac{S(0,u_j)}{2^{k_j+1}}\right\}.
\end{equation}

Therefore, from~\eqref{neg} 
we have that~$1\ge j2^{-k_j}/2$,
which implies that~$2^{k_j}\ge j/2$. 
Hence, $k_j$ tends to~$+\infty$ when~$j\to+\infty$. 

We set
\begin{equation}\label{sigma}
\sigma_j:=\frac{2^{-k_j}}{S(k_j+1,u_j)}. 
\end{equation}
It follows from \eqref{neg}  that 
\begin{equation}\label{cc18}
\sigma_j <\frac{2}{j}\to 0 \ {\mbox{ as }}j\to+\infty.
\end{equation}

The basic idea of the proof is to show that the scaled functions 
\begin{equation}\label{scaled v}
v_j(x):=\frac{u_j(x_j+2^{-k_j}x)}{S(k_j+1,u_j)}
\end{equation}
converge to a linear function in $\R^n$, which will be in contradiction with \eqref{level}. 
The proof falls naturally into two parts: first establish some uniform estimates for 
the sequence $\{v_j\}$, and then prove that the limit is a $p$-harmonic function in $\R^n$.

By construction, 
\begin{equation}\label{cc1}
\sup_{B_{1/2}}|v_j|=1.
\end{equation}
Furthermore, from~\eqref{neg} we have that 
$$ 1>\max\left\{\frac{j2^{-k_j}}{2S(k_j+1,u_j)},\frac{1}{2}\sup_{B_1}|v_j|,\dots, 
\frac{1}{2^{m+1}}\sup_{B_{2^m}}|v_j|,\dots,\frac{1}{2^{k_j+1}}\sup_{B_{2^{k_j+1}}}|v_j|\right\},$$
which in turn implies that 
\begin{equation}\label{cc2}
\sup_{B_{2^m}}|v_j|\le 2^{m+1}, \quad \text{for any}\ m<2^{k_j}.
\end{equation}
Finally, since~$u_j(x_j)=0$, we have that
\begin{equation}\label{cc3} v_j(0)=0.\end{equation}

Next, from~\eqref{scaled v} we get 
$$ \nabla v_j(x)=\frac{2^{-k_j}}{S(k_j+1,u_j)}\nabla u_j(x_j+2^{-k_j}x)=\sigma_j\na u_j(x_j+2^{-k_j}x).$$
This gives 

\begin{eqnarray*}
T_{lm}(\na u_j)(x_j+2^{-k_j}x)=T_{lm}(\frac1{\sigma_j}\na v_j(x))=\frac1{\sigma_j^p}T_{lm}(\na v_0(x)).
\end{eqnarray*}
Consequently, letting $\mathcal B_j^\ast(x):=\mathcal B^\ast(x_j+2^{-k_j}x)$ 
and substituting $\na v_j$ into \eqref{eq:energy} we get the differential relation 

\begin{equation}\label{eq:T-eq}
\p_l(T_{lm}(\na v_j))=\p_m(\sigma_j^p p \mathcal B_j^\ast(x)).
\end{equation}
Note that $\sigma_j^p\mathcal B_j^*(x)\to 0$
since $\mathcal B^*$ is bounded.

Hence, from Propositions~\ref{prop-comp},  \ref{prop-1st-blow} (with $m_j=S(k_j+1, u_j)$) 
and \ref{prop-seq-comp} we obtain that
for any $0<R<2^{k_j}$ there exists a 
constant~$C=C(R,p)>0$ independent of~$j$ such that 
$$
\max\{\|v_j\|_{C^{\alpha}(B_R)},\|\nabla v_j\|_{L^p(B_R)}\}\le C,
$$
with  {\bblue{$\alpha>0$ because $\|\nabla v_j\|_{BMO(B_R)}\le C\|v_j\|_{L^\infty(B_{2R})}\le C2^{m+2}$ in view of \eqref{cc2} and \eqref{BMO} (one can take $\alpha=1-\frac np$, if $p>n$).}}
Therefore, by a standard compactness argument, we have that, up to a subsequence,
\begin{equation}\begin{split}\label{choice q}
&{\mbox{$v_j$ 
converges to some function~$v$ as~$j\to+\infty$ in~$C^{\alpha}(B_R)$}}, \\
&{\mbox{and
strongly in~$W^{1,p}(B_R)$, for any fixed~$R$.}}
\end{split}\end{equation} 
 {\bblue Note that the estimate  $\|\nabla v_j\|_{BMO(B_R)}\le C2^{m+2}$, which holds for any 
 fixed $m$ satisfying $R<2^{m+2}$, implies that 
 $\|\na v_j\|_{L^{p'}(B_{R'})}\le C(p',R')$ uniformly for any $R'<R<2^{m+2}$ and $p'<\infty$. }
{\bblue Consequently, by $C^\alpha$ uniform continuity} the properties \eqref{cc1}, \eqref{cc2} and~\eqref{cc3} translate to 
$v$ so that  
\begin{equation}\label{xrusch1101}
\sup_{B_{1/2}}|v|=1,\quad \sup_{B_{2^m}}|v|\le 2^{m+1} \quad 
{\mbox{and }} \quad v(0)=0.
\end{equation}

Thanks to Theorem \ref{thm-entropy}, \eqref{eq:T-eq} and the strong convergence 
$\na v_j\to \na v$ in {\bblue $L^{p'}_{loc}(B_R')$ for any $p'<\infty$}, we conclude that  $\Delta_p v=0$ in $\R^n$.

{\bblue Given $x\in R^n$, take $m\in \mathbb N$ such that $2^m\le |x|\le 2^{m+1}$, 
then utilizing the second inequality in \eqref{xrusch1101} we get $|v(x)|\le \sup_{B_{2^{m+1}}}|v|\le C2^{m+2}\le 4C|x|$. }
Hence, from Liouville's Theorem {\bblue for the $p-$harmonic functions in $\R^n$ with linear growth}, we deduce that~$v$ must be a linear function in $\R^n$.
After rotating the coordinate system we can take 
\begin{equation}\label{sper23}
{\mbox{$v(x)=Cx_1$ for some positive constant
$C$.}}\end{equation} 
{\bblue Note that $\sup_{B_{1/2}}|v|$ in \eqref{xrusch1101} implies that $C\not=0$. }

On the other hand, \eqref{level} implies that the following inequality holds true 
for the function~$v_j$: 
$$ h(0,1)\ge \frac{h_0}{2}.$$
By the uniform convergence in \eqref{choice q}, 
we have that for any $\epsilon >0$ there is $j_0$ such that $|Cx_1-v_j(x)|<\epsilon$
whenever $j>j_0$.  Since $\fb {v_j}$ is $h_0/2$ thick in $B_1$ it follows that 
there is $y_j\in \fb {v_j}\cap B_1$ such that $y_j=e_1h_0/4+t_je'$, for some $t_j\in \R$, where 
$e_1$ is the unit direction of $x_1$ axis and $e'\perp e_1$. Then we have that 
$\left|C\frac{h_0}4-0\right|=|v(y_j)-v_j(y_j)|<\epsilon$, which is a contradiction if $\epsilon$ is small.
This finishes the proof of \eqref{discrete-linear}.
\end{proof}

%------------------------
%     SECTION
%------------------------
\section{Density of $\{u<0\}$ vs linear growth}
\label{sec:dens}
Define 
\[
\Theta(u, x_0, r)=\frac{\vol(\{ u<0\}\cap B_r(x_0))}{\vol( B_r)}.
\]
In this section we prove 
\begin{lem}\label{lem-dense}
There is $\delta>0$ such that if $\Theta(u, x_0, r)<\delta$ for some $B_r(x_0)\subset B_{1/2}, x_0\in \fb {u_0}$
then 
\[
\sup_{B_{\frac r2}(x_0)}|u|\le \frac{4 r}{\delta}.
\]
\end{lem}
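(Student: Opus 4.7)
The plan is a contradiction--compactness argument closely modelled on Proposition~\ref{prop:lin flat}. Suppose no such $\delta$ exists. Then there are sequences $\delta_k\downarrow 0$, limit solutions $u_k$ (each a $W^{1,p}_{loc}$-limit of a family of $\uej$-solutions), points $x_k\in\fb{u_k}\cap B_{1/2}$ and radii $r_k>0$ with $B_{r_k}(x_k)\subset B_{1/2}$, such that
\[
\Theta(u_k,x_k,r_k)<\delta_k,\qquad M_k:=\sup_{B_{r_k/2}(x_k)}|u_k|>\frac{4r_k}{\delta_k}.
\]
In particular $r_k/M_k<\delta_k/4\to 0$, which is the driving smallness.

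I would then rescale $v_k(x):=u_k(x_k+\tfrac{r_k}{2}x)/M_k$ on $B_2$, so that $v_k(0)=0$ and $\sup_{B_1}|v_k|=1$. The BMO bound~\eqref{BMO} is scale-invariant, so $\|\na v_k\|_{BMO(B_R)}\le C(R)$ uniformly in $k$. Combining Propositions~\ref{prop-1st-blow} and~\ref{prop-2nd-blow} yields a diagonal family $\hat u^{\e_{j(k)}}_k$ approximating $v_k$ in $W^{1,p}_{loc}(\R^n)$, and according to the scaling formula~\eqref{eq-lap-scale} these approximants satisfy $\Delta_p\hat u^{\e_{j(k)}}_k=\sigma_k\,\beta_{\delta_{j(k)}}(\hat u^{\e_{j(k)}}_k)$ with $\sigma_k=[r_k/(2M_k)]^p\to 0$. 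After passing to a subsequence, $v_k\to v$ strongly in $W^{1,p}_{loc}(\R^n)\cap C^{1-n/p}_{loc}(\R^n)$, and the density hypothesis $\vol(\{v_k<0\}\cap B_2)<2^n\delta_k\to 0$ together with the uniform Hölder continuity forces $\{v<0\}=\emptyset$, hence $v\ge 0$. In particular $\Delta_p v=0$ on $\po v\cup\{v<0\}$ and the hypotheses of Theorem~\ref{thm-entropy} are fulfilled for the sequence $\hat u^{\e_{j(k)}}_k$ with the vanishing prefactor $\sigma_k$. Theorem~\ref{thm-entropy} then delivers $\Delta_p v=0$ in all of $\R^n$; since $v\ge 0$, $v(0)=0$ and $v$ is continuous, the strong maximum principle for $p$-harmonic functions gives $v\equiv 0$ on $\R^n$, contradicting $\sup_{B_1}|v|=1$.

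The main obstacle is the bookkeeping behind the double blow-up: one must simultaneously (i) carry the BMO bound across the two levels of approximation so that Theorem~\ref{thm-entropy} is applicable to the limit, (ii) secure strong $W^{1,p}$-convergence of $v_k$ to $v$ (via Proposition~\ref{prop-seq-comp}) rather than merely weak convergence, because weak convergence would not prevent a thick negative set from developing in the limit, and (iii) track the correct vanishing factor $\sigma_k=[r_k/(2M_k)]^p$ supplied by $r_k/M_k\to 0$. Once this diagonalization is arranged, the combination of nonnegativity, $p$-harmonicity in $\R^n$, and the strong maximum principle closes the argument and delivers the stated growth $\sup_{B_{r/2}(x_0)}|u|\le 4r/\delta$.
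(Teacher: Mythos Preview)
Your approach is the same contradiction--compactness strategy as the paper's, but there is a genuine gap. By negating the conclusion directly you obtain $\sup_{B_1}|v_k|=1$, yet you have no control over $\sup_{B_2}|v_k|$ (or $\sup_{B_R}|v_k|$ for any $R>1$): the ratio $\sup_{B_{r_k}(x_k)}|u_k|\big/M_k$ can be arbitrarily large, so the sequence $v_k$ need not be bounded on $B_2$, let alone on compact subsets of $\R^n$. This breaks both your compactness claim ``$v_k\to v$ strongly in $W^{1,p}_{loc}(\R^n)$'' and your invocation of the BMO estimate, which in the paper's usage (see the proof of Proposition~\ref{prop:lin flat}) takes the form $\|\na v_j\|_{BMO(B_R)}\le C\|v_j\|_{L^\infty(B_{2R})}$ and therefore presupposes an $L^\infty$ bound on the larger ball. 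Even if you retreat to $B_1$, where $|v_k|\le 1$, the normalization $\sup_{B_1}|v_k|=1$ may be attained arbitrarily close to $\partial B_1$; after passing to a uniform limit on $\overline{B_{3/4}}$ you cannot conclude $\sup|v_0|>0$ on any compactly contained ball, and the strong minimum principle yields nothing.

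The paper repairs this with an intermediate dyadic claim: one proves instead that
\[
S(k+1)\le\max\Bigl\{\tfrac{1}{\delta\,2^{k+1}},\ \tfrac12 S(k)\Bigr\},\qquad S(k)=\sup_{B_{2^{-k}}(x_0)}|u|.
\]
Negating this produces, for a sequence $u_j$, simultaneously $\sigma_j=2^{-k_j}/S(k_j+1)\to 0$ \emph{and} the doubling bound $S(k_j)<2S(k_j+1)$. Hence the rescalings satisfy $\sup_{B_{1/2}}|v_j|=1$ together with $\sup_{B_1}|v_j|\le 2$. Compactness then holds on $\overline{B_{3/4}}$, the normalization survives on the compactly contained set $\overline{B_{1/2}}$, Theorem~\ref{thm-entropy} gives $\Delta_p v_0=0$ in $B_{3/4}$, and one contradicts the strong minimum principle there (working on all of $\R^n$ is unnecessary). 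The missing ingredient in your argument is precisely this second term $\tfrac12 S(k)$ in the dyadic claim.
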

\begin{proof}
Suppose that 
\begin{equation}\label{dens-neg}
\Theta(u, x_0, 2^{-k})<\delta 
\end{equation} 
and we claim that 
\begin{equation}\label{7-dndooo}
S(k+1)\le \max  \left\{ \frac{1}{\delta 2^{k+1}}, \frac12 S(k) \right\},
\end{equation} 
where $S(k):=\sup_{B_{2^{-k}}(x_0)}|u|$, for $k\in\N$. 
The proof by contradiction.
Suppose that \eqref{7-dndooo} fails. Then 
there is a sequence of integers $k_j$ and $u_j$ (i.e. $u_j=\lim_{k\to \infty} u^{\e_k(j)}$
where $\{u^{\e_k(j)}\}_{k=1}^\infty$ are solutions to \eqref{pde-0} for each $j$ fixed), with $j=1,2,\ldots$, such that 
\begin{equation}\label{7-not-dndooo}
\Theta(u_j, x_j, 2^{-k_j})\le \frac1j, \quad S(k_j+1)> \max  \left\{\frac{ j}{2^{k_j+1}}, \frac12 S(k_j) \right\}.
\end{equation} 
Since $|u_j|\le 1$, then \eqref{7-not-dndooo} implies that 
$k_j\to \infty$ as $j\to+\infty$. 
Also, notice that \eqref{7-not-dndooo} implies that 
\begin{equation}\label{7-3.30bis}
\frac{2^{-k_j}}{S(k_j+1)}\le\frac{2}{j}\to 0 \quad {\mbox{ as }} j\to+\infty.
\end{equation}

Now, we introduce the scaled functions $v_j(x):=\frac{u_j(x_0+2^{-k_j}x)}{S(k_j+1)}$, for  $x\in B_1$.
Then, from \eqref{dens-neg} and \eqref{7-3.30bis}, it follows that 
\begin{equation}\label{7-dndooo-3}
\Theta (v_j, 0, 1)\le \frac 1j, \quad v_{j}(0)=0. 
\end{equation}

Furthermore,  \eqref{7-not-dndooo} implies that 
\begin{equation}\label{7-dndooo-1}
\sup_{B_1}|v_j|\leq 2, \quad {\mbox{ and }} \quad \sup_{B_{\frac12}}|v_j|=1.
\end{equation}
We know from Proposition \ref{prop-seq-comp} with 
$m_j=S(k_j+1)$ that $\|v_j\|_{W^{1,p}(B_{\frac34})}$ are uniformly bounded. 
So we can extract 
a converging subsequence such that $v_j\to v_0$
uniformly in $\overline{B_{\frac34}}$ 
and $\na v_j\to \na v_0$ weakly
in $L^p(B_{\frac34})$. 
As in the proof of Proposition \ref{prop:lin flat} we get that $\Delta_p v_0 \na v_0=0 $, 
and consequently $v_0$ is a $p$-harmonic function in $\R^n$.

Moreover,  \eqref{7-dndooo-3}, \eqref{7-dndooo-1} and Theorem \ref{thm-entropy} yield
\begin{eqnarray*}
\Delta_p v_0(x)=0, \quad v_0(x)\ge 0\  \text{if}\ x\in B_{\frac34}, \quad  v_0(0)=0, 
\quad {\mbox{ and }} \sup_{B_{\frac12}}v_0=1
\end{eqnarray*}
which is in contradiction with the strong minimum principle. 
This shows \eqref{7-dndooo} and the proof follows.

\end{proof}

%------------------------
%     SECTION
%------------------------
\section{Viscosity solutions}
\label{sec:viscosity}

If $\Theta(u, x_0, r)$ is either small or $\fb u$ is non-flat then $u$ has linear growth near $x_0\in \fb u$.
Thus the remaining case to be analyzed is the following : 
\[
\Theta(u, x_0, r)\  \text{is large {\bf and}} \ \fb u\ \text{ is flat near}\  x_0.
\]

To tackle this remaining case we want to use the stratification argument from \cite{DK} for the 
viscosity solutions 
in order to obtain the Lipschitz continuity of $u$. 
This will be done by combining the above results. 
To define the notion of viscosity solution we let 
$\Omega^+(u)=\{u>0\}$ and~$\Omega^-(u)=\{u<0\}$.
If the free boundary is $C^1$  smooth then 
\begin{equation}\label{FB-cond-G}
G(u^+_\nu,u^-_\nu):=(u_\nu^+)^p-(u_\nu^-)^p-\Lambda_0
\end{equation} 
is called the free boundary condition,
where~$u^+_\nu$ and~$u^-_\nu$ are the normal derivatives in the inward direction
to~$\partial \Omega^+(u)$ and~$\partial \Omega^-(u)$, respectively. 
Here $\Lambda_0=\frac{\Lambda}{p-1}=\frac{ pM}{p-1}$ is the Bernoulli constant with $M=\int_0^1\beta$. 

To justify the form of the free boundary condition we first show that 
for smooth free boundaries \eqref{FB-cond-G} is true. 

\begin{lem}\label{lem:8}
Let~$\uej$ be a  family of solutions  to \eqref{pde-0}, with $\e=\e_j$, such that $\uej\to u$
locally uniformly in $B_{1}$.  Suppose that $\fb u$ is $C^{1, \gamma}, \gamma\in (0, 1)$ 
regular hypersurface and $\Theta(u, x_0, r)>\delta$ for some $r>0$. Then 
\eqref{FB-cond-G} holds. 
\end{lem}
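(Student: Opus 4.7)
The plan is to exploit the weak energy identity \eqref{eq:energy} with a test vectorfield $X$ supported in a small ball around $x_0$, and to match the bulk term $p\mathcal B^{\ast}\div X$ against the jump of the flux of the tensor $T_{lm}=p|\na u|^{p-2}u_lu_m-|\na u|^p\delta_{lm}$ across the smooth free boundary $\Gamma=\fb u$. The first task is to identify $\mathcal B^{\ast}$. Since $\beta$ is supported in $[0,1]$ with $\int\beta=M$, the antiderivative $\mathcal B(t)=\int_0^t\beta$ equals $0$ for $t\le 0$ and $M$ for $t\ge 1$. For $x$ with $u(x)>0$, the uniform convergence $\uej\to u$ from Proposition \ref{prop-compactness} yields $\uej(x)\ge \e_j$ for $j$ large, whence $\mathcal B(\uej(x)/\e_j)=M$; similarly $\mathcal B(\uej(x)/\e_j)=0$ whenever $u(x)<0$. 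The $C^{1,\gamma}$ regularity of $\Gamma$ together with the density condition $\Theta(u,x_0,r)>\delta$ ensure that, locally around $x_0$, $\{u=0\}=\Gamma$ has Lebesgue measure zero, so bounded convergence gives $\mathcal B(\uej/\e_j)\to M\I u$ in $L^1_{\mathrm{loc}}$, and hence $\mathcal B^{\ast}(x)=M\I u(x)$ almost everywhere in a neighborhood of $x_0$.

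Next I rewrite \eqref{eq:energy} as
$$p\int \mathcal B^{\ast}\div X=\int T_{lm}\,\p_l X^m.$$
By Proposition \ref{prop-compactness}, $u$ is $p$-harmonic in each of $\Omega^{\pm}:=\{\pm u>0\}$, and a direct computation shows $\p_l T_{lm}=p(\Delta_p u)u_m$, which vanishes in $\Omega^{\pm}$. Boundary $C^{1,\alpha}$ regularity of $p$-harmonic functions near a $C^{1,\gamma}$ boundary permits the divergence theorem on each phase separately, giving
$$\int T_{lm}\,\p_l X^m=\int_\Gamma\bigl[T_{lm}^{+}(-\nu^{+}_l)+T_{lm}^{-}\nu^{+}_l\bigr]X^m\,dS,$$
where $\nu^{+}$ denotes the unit normal to $\Gamma$ pointing into $\Omega^{+}$ and $T_{lm}^{\pm}$ are the boundary traces. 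On $\Gamma$, $u^{\pm}=0$, so the tangential components of $\na u|_{\Gamma^{\pm}}$ vanish and $\na u|_{\Gamma^{+}}=u_\nu^{+}\nu^{+}$, $\na u|_{\Gamma^{-}}=u_\nu^{-}\nu^{+}$ (same direction, generally different magnitudes). A short computation then yields $T_{lm}^{+}(-\nu^{+}_l)=-(p-1)(u_\nu^{+})^p\nu^{+}_m$ and $T_{lm}^{-}\nu^{+}_l=(p-1)(u_\nu^{-})^p\nu^{+}_m$.

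Finally, the identification of $\mathcal B^{\ast}$ gives
$$p\int\mathcal B^{\ast}\div X=pM\int_{\po u}\div X=-pM\int_\Gamma X\cdot\nu^{+}\,dS,$$
while the preceding paragraph gives $\int T_{lm}\,\p_l X^m=(p-1)\int_\Gamma[(u_\nu^{-})^p-(u_\nu^{+})^p]X\cdot\nu^{+}\,dS$. Equating and using the arbitrariness of $X$ produces $(u_\nu^{+})^p-(u_\nu^{-})^p=pM/(p-1)=\Lambda_0$ pointwise on $\Gamma$, which is precisely \eqref{FB-cond-G}. The main obstacle I expect is the identification $\mathcal B^{\ast}=M\I u$ almost everywhere: the smoothness of $\Gamma$ together with the density assumption on $\{u<0\}$ are exactly what rule out a ``dead core'' $\{u\equiv 0\}$ of positive Lebesgue measure, which would otherwise produce a spurious contribution to $\mathcal B^{\ast}$. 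Once this identification is in hand, the rest of the argument is a routine divergence-theorem computation performed separately on each phase.
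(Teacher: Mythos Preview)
Your argument is correct and rests on the same two pillars as the paper's proof: the identification $\mathcal B^{\ast}=M\I u$ near $x_0$, and the weak energy identity \eqref{eq:energy}. The organization, however, differs. The paper first blows up at $x_0$ via Proposition~\ref{prop-1st-blow}, so that by the $C^{1,\gamma}$ regularity of $\Gamma$ the limit is the explicit two-plane function $\alpha x_1^+-\bar\alpha x_1^-$; the identification of $\mathcal B^{\ast}$ and the energy identity are then carried out for this explicit profile, and the computation $\int_{B_1^\pm}\div X=\mp\int_{\{x_1=0\}}X^1$ reduces everything to one line of algebra. You instead stay at the level of $u$ itself, identify $\mathcal B^{\ast}$ pointwise from the uniform convergence, and then integrate by parts on each phase, invoking boundary $C^{1,\alpha}$ regularity of $p$-harmonic functions to justify the divergence theorem up to $\Gamma$.

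What each approach buys: your direct route avoids the blow-up machinery (Propositions~\ref{prop-1st-blow}--\ref{prop-2nd-blow}) and makes the role of the tensor $T_{lm}$ and its jump across $\Gamma$ completely transparent; it also yields the free boundary condition at every point of $\Gamma$ in one stroke rather than pointwise. The paper's blow-up, on the other hand, sidesteps the need to justify the divergence theorem up to a $C^{1,\gamma}$ boundary for the degenerate operator $\Delta_p$, and reduces the identification of $\mathcal B^{\ast}$ to an elementary estimate on the slab $\{|x_1|<\tau\}$. Both routes implicitly use the density hypothesis $\Theta(u,x_0,r)>\delta$ to guarantee that the negative phase is genuinely present near $x_0$ (equivalently $\bar\alpha>0$ in the blow-up), which is exactly the point you flag about ruling out a dead core.
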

\begin{proof}
Let $x_0\in \fb u$, then from the boundary estimates for the 
$p$-harmonic functions we know that $u^\pm$ are $C^{1, \gamma}$ up to the free boundary.
Let $\rho_k\to 0$ and consider $\frac1{\rho_k}u(x_0+\rho_kx)\to \alpha x_1^+-\bar\alpha x_1^-$
(after rotation the coordinate system), where $\alpha, \bar \alpha$ are nonnegative constants. By Proposition \ref{prop-1st-blow}
there is a sequence $\e_j\to 0$ so that $u^{\e_j}(x)\to \alpha x_1^+-\bar\alpha x_1^-$ uniformly on 
compact subsets of $\R^n$
and $\e_j/\rho_k\to 0$. Moreover, $\na u^{\e_j}\to \alpha e_1\I u-\bar\alpha e_1^-\chi_{\{u\le 0\}}$ 
strongly in $L^p$ on compact subsets of $\R^n$.

Let us check that $\mathcal B((\uej/\rho_k)/(\e_j/\rho_k))=\int_0^{\uej/\e_j}\beta(t)dt \to M\I u$
$\ast$-weakly in $L^\infty_{loc}$.  Take $\tau>0$ small and fix $R>0$, then 
\begin{eqnarray*}
\int_{B_R} \mathcal B((\uej/\rho_k)/(\e_j/\rho_k))
&=&
\int_{B_R\cap\{|x_1|<\tau\}} \mathcal B((\uej/\rho_k)/(\e_j/\rho_k))\\
&&+
\int_{B_R\cap\{|x_1|\ge\tau\}} \mathcal B((\uej/\rho_k)/(\e_j/\rho_k))=I_1+I_2.
\end{eqnarray*}
By uniform convergence $\uej\to \alpha x_1-\bar\alpha x_1^-$ we have that 
there exists $j_0$ large so that $\uej/\e_j>1$ in $B_R\cap\{|x_1|\ge\tau\}$, thus 
$I_2=\int_{B_R\cap\{|x_1|\ge\tau\}}M\to M\int_{B_R\cap\{|x_1|\ge\tau\}}$.
As for the remaining term $I_1$, we observe that
\[
I_1\le \tau R^{n-1}.
\] 
Thus first sending $j\to \infty$ and then $\tau\to 0$ the desired result follows.  
Consequently we can apply \eqref{eq:energy} to 
$ \alpha x_1^+-\bar\alpha x_1^-$ to obtain 
\[
\int_{B_1^+}(\alpha^p+pM) \div X+\int_{B_1^-}(\bar \alpha^p+M) \div X
=
p\int_{B_1^+}\alpha^p X_1^1 +p\int_{B_1^-}(\bar \alpha^p+M) X_1^1,
\]
where we used the notation $B^\pm_1=B_1\cap \{\pm x_1>0\}$.
Since $\int_{B_1^\pm}\div X=\mp\int_{B_1\cap \{x_1=0\}}X^1$
and $\int_{B_1^\pm} X^1_1=\mp\int_{B_1\cap \{x_1=0\}}X^1$
we get 
\[
-(\alpha^p+pM)+\bar \alpha^p=-p\alpha^p+p\bar\alpha^p,
\]
or equivalently $(\alpha^p-\bar\alpha^p)(p-1)=pM$ which is \eqref{FB-cond-G}.
\end{proof}

\begin{defn}\label{def:visc}
Let~$\Omega$ be a bounded domain of~$\R^N$ and let~$u$ 
be a continuous function in~$\Omega$. We say that~$u$ is a viscosity solution
in~$\Omega$ if
\begin{itemize}
\item[i)] $\Delta_p u=0$ in~$\Omega^+(u)$ and~$\Omega^-(u)$,
\item[ii)] along the free boundary~$\Gamma$, $u$ satisfies the free boundary condition, in the sense that:
\begin{itemize}
\item[a)] if at~$x_0\in\Gamma$ there exists a ball~$B\subset\Omega^+(u)$
such that~$x_0\in \partial B$ and
\begin{equation}\label{visc1}
u^+(x)\ge\alpha\langle x-x_0,\nu\rangle^+ + o(|x-x_0|), \ {\mbox{ for }} x\in B,
\end{equation}
\begin{equation}\label{visc2}
u^-(x)\le\beta\langle x-x_0,\nu\rangle^- + o(|x-x_0|), \ {\mbox{ for }} x\in B^c,
\end{equation}
for some $\alpha>0$ and~$\beta\ge0$, with equality along every non-tangential domain,
then the free boundary condition is satisfied
$$ G(\alpha,\beta)=0, $$
\item[b)] if at~$x_0\in\Gamma$ there exists a ball~$B\subset\Omega^-(u)$
such that~$x_0\in \partial B$ and
$$ u^-(x)\ge\beta\langle x-x_0,\nu\rangle^- + o(|x-x_0|), \ {\mbox{ for }} x\in B, $$
$$ u^+(x)\le\alpha\langle x-x_0,\nu\rangle^+ + o(|x-x_0|), \ {\mbox{ for }} x\in\partial B, $$
for some $\alpha\ge0$ and~$\beta>0$, with equality along every non-tangential domain,
then
$$ G(\alpha,\beta)=0. $$
\end{itemize}
\end{itemize}
\end{defn}

The main result of this section is the following:
\begin{thm}\label{TH:viscosity}
Let~$\uej$ be a  family of solutions  to \eqref{pde-0}, such that $\uej\to u$
locally uniformly in $B_{1}$.  
Then~$u$ is a viscosity solution in~$\Omega$ in the sense of 
Definition~\ref{def:visc}.
\end{thm}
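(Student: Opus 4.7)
The first condition of Definition \ref{def:visc}, namely $\Delta_p u=0$ in $\Omega^+(u)\cup\Omega^-(u)$, is already built in by Proposition \ref{prop-compactness}(iii). The substantive task is to verify the free boundary condition $G(\alpha,\beta)=0$ at any point $x_0\in\Gamma$ satisfying the touching-ball hypothesis of Definition \ref{def:visc}, case (a); case (b) is symmetric. The strategy is to combine a blow-up at $x_0$ with the weak energy identity of Lemma \ref{putiin-2}, so that the algebraic computation at the end of the proof of Lemma \ref{lem:8} can be reused verbatim, only now on the blow-up limit rather than on a point of an assumed smooth free boundary.

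Fix such an $x_0$ with inward normal $\nu=e_1$ (after rotation) and set $u_r(x):=u(x_0+rx)/r$. The rescaled ball $(B-x_0)/r$ is tangent to $\{x_1=0\}$ at the origin and its radius diverges as $r\to 0$, so it exhausts $\{x_1>0\}$ in the limit; thus the non-tangential equalities in \eqref{visc1}--\eqref{visc2} pin down $u_{r_k}\to U$ on non-tangential cones into each open half-space along any sequence $r_k\downarrow 0$, where $U(x):=\alpha x_1^+-\beta x_1^-$. Local uniform convergence on $\R^n$ then follows from interior $C^{1,\alpha}$ estimates for the $p$-harmonic restrictions of $u_{r_k}$ to $\{u_{r_k}>0\}$ and $\{u_{r_k}<0\}$ together with the continuity of $u$ across $\Gamma$. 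I would next invoke Proposition \ref{prop-1st-blow} with $m_k=\rho_k=r_k$ to extract indices $j(k)\to\infty$ for which $\delta_k:=\e_{j(k)}/r_k\to 0$ and the rescaled solutions $\hat u^{\e_{j(k)}}_k(x):=u^{\e_{j(k)}}(x_0+r_kx)/r_k$ converge to $U$ uniformly on compacta of $\R^n$ with $\na\hat u^{\e_{j(k)}}_k\to\na U$ strongly in $L^p_{\mathrm{loc}}(\R^n)$.

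Lemma \ref{putiin-2} then furnishes a bounded nonnegative $\mathcal B^*\le M$ with
\[
\int\bigl[|\na U|^p+p\,\mathcal B^*(x)\bigr]\div X\,dx=p\int|\na U|^{p-2}\na U\,\na X\,\na U\,dx,\qquad X\in C_0^1(\R^n,\R^n).
\]
The critical step is to identify $\mathcal B^*=M\chi_{\{x_1>0\}}$. I would do this by repeating the slicing argument at the end of Lemma \ref{lem:8}: for fixed $R>0$ and small $\tau>0$, split the integral of $\mathcal B(\hat u^{\e_{j(k)}}_k/\delta_k)$ on $B_R$ into $B_R\cap\{|x_1|<\tau\}$, bounded by $M\tau R^{n-1}$, and $B_R\cap\{|x_1|\ge\tau\}$, where uniform convergence and $\delta_k\to 0$ force $\hat u^{\e_{j(k)}}_k/\delta_k>1$ on $\{x_1\ge\tau\}$ and $<0$ on $\{x_1\le-\tau\}$ for large $k$, so that $\mathcal B$ equals $M$ on the first region and $0$ on the second. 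Sending first $k\to\infty$ and then $\tau\to 0$ identifies $\mathcal B^*$.

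Substituting $U=\alpha x_1^+-\beta x_1^-$ and $\mathcal B^*=M\chi_{\{x_1>0\}}$ into the energy identity and applying the divergence theorem on $B_1\cap\{\pm x_1>0\}$ separately reproduces the closing calculation of Lemma \ref{lem:8} and yields $(\alpha^p-\beta^p)(p-1)=pM$, i.e.\ $G(\alpha,\beta)=0$. The main obstacle is precisely the identification of $\mathcal B^*$ in the absence of a priori regularity of $\Gamma$; the essential inputs are the one-sided ball condition $B\subset\Omega^+(u)$ and the non-tangential equalities in \eqref{visc1}, \eqref{visc2}, which together confine the transition set $\{0<\hat u^{\e_{j(k)}}_k<\delta_k\}$ to an arbitrarily thin neighborhood of $\{x_1=0\}$ and thereby reduce the concentration computation to the flat-interface case already handled in Lemma \ref{lem:8}.
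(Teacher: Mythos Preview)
Your core strategy---blow up at $x_0$ to the two-plane $U=\alpha x_1^+-\beta x_1^-$, transfer the weak energy identity to $U$ via Proposition~\ref{prop-1st-blow}, identify $\mathcal B^*$ by the slicing of Lemma~\ref{lem:8}, and read off $G(\alpha,\beta)=0$---is the skeleton the paper also has in mind. But the paper declares the proof to follow from Lemmas~\ref{lemma:linear} and~\ref{lem:dndooo}, and the second of these is precisely the step you are missing.

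The gap is the passage from ``$u_{r_k}\to U$ on non-tangential cones'' to ``$u_{r_k}\to U$ locally uniformly on $\R^n$''. The hypotheses \eqref{visc1}--\eqref{visc2} give a lower bound on $u^+$ inside the touching ball $B$ and an upper bound on $u^-$ in $B^c$; neither supplies an upper bound on $u^+$ in $B^c$, so you have no a~priori $L^\infty$ control on $u_{r_k}$ outside the rescaled ball or in the tangential strip. Interior $C^{1,\alpha}$ estimates for the $p$-harmonic pieces cannot rescue this: they degenerate at the unknown free boundary $\partial\{u_{r_k}>0\}$, and in any case bound the gradient only in terms of an $L^\infty$ norm you do not yet possess. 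Nor does the ambient H\"older regularity help, since the $C^{0,1-n/p}$ seminorm of $u_{r_k}$ scales like $r_k^{-n/p}\to\infty$. Without uniform boundedness you cannot invoke Proposition~\ref{prop-1st-blow}, and without that the energy identity does not pass to $U$. This is exactly why the paper isolates Lemma~\ref{lem:dndooo}: from $\sup_{B_r(x_0)}u^-\le C_0 r$ (which \eqref{visc2} does provide) it manufactures $\sup_{B_r(x_0)}u^+\le (1+\sigma C_0)r$ via a dyadic contradiction argument resting on Theorem~\ref{thm-entropy}, and only then are the blow-ups bounded and compact.

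A secondary issue: your identification of $\mathcal B^*$ on $\{x_1\le -\tau\}$ relies on $\hat u^{\e_{j(k)}}_k<0$ there, which uniform convergence supplies only when $\beta>0$. When $\beta=0$ the limit $U\equiv 0$ on $\{x_1<0\}$ carries no sign information on $\hat u/\delta_k$, and the slicing does not close; note that Lemma~\ref{lem:8} itself carries the extra density hypothesis $\Theta(u,x_0,r)>\delta$ for this reason.
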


The proof of  Theorem~\ref{TH:viscosity}, will follow from  Lemmas \ref{lemma:linear} and \ref{lem:dndooo} below. 
For the proof of Lemma \ref{lemma:linear} see Appendix \cite{DK}.

\begin{lem}\label{lemma:linear}
Let~$0\le u\in W^{1, p}(\Omega)$ be a solution of $\Delta_p u=0$ in~$\Omega$
and~$x_0\in\partial\Omega$. Suppose that~$u$ continuously 
vanishes on~$\partial\Omega\cap B_1(x_0)$.
Then
\begin{itemize}
\item[a)] if there exists a ball~$B\subset\Omega$ touching~$\partial\Omega$ at~$x_0$,
then either~$u$ grows faster than any linear function at~$x_0$, or there exists
a constant~$\alpha>0$ such that
\begin{equation}\label{linear alpha}
u(x)\ge \alpha\langle x-x_0,\nu\rangle^+ +o(|x-x_0|) \quad {\mbox{ in }}B,
\end{equation}
where~$\nu$ is the unit normal to~$\partial B$ at~$x_0$, inward to~$\Omega$.
Moreover, equality holds in~\eqref{linear alpha} in any non-tangential domain.
\item[b)] if there exists a ball~$B\subset\Omega^c$ touching~$\partial\Omega$ at~$x_0$,
then there exists a constant~$\beta\ge0$ such that
\begin{equation}\label{linear beta}
u(x)\le \beta\langle x-x_0,\nu\rangle^+ +o(|x-x_0|) \quad {\mbox{ in }}B^c,
\end{equation}
with equality in any non-tangential domain.
\end{itemize}
\end{lem}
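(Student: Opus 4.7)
The plan is to carry out a blow-up analysis at $x_0$ and combine it with a Liouville-type classification of nonnegative $p$-harmonic functions in a half-space. After a rotation and translation assume $x_0=0$ and let $\nu=e_1$, so that in part (a) the ball $B=B_{r_0}(r_0 e_1)$ is tangent to $\partial\Omega$ at the origin from inside. Define the rescaled functions
\[
u_r(x):=\frac{u(rx)}{r},\qquad x\in\Omega_r:=\tfrac1r\Omega,
\]
which are $p$-harmonic in $\Omega_r$ and vanish continuously on $\partial\Omega_r\cap B_{1/r}(0)$. Since $B\subset\Omega$, the rescaled ball $B_{r_0/r}(r_0e_1/r)$ is contained in $\Omega_r$ and exhausts the half-space $H:=\{x_1>0\}$ as $r\to0$.

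The dichotomy splits according to whether $M(r):=r^{-1}\sup_{B_r(0)\cap\Omega}u$ is bounded as $r\to0$. If $M(r)\to\infty$, then by definition $u$ grows faster than any linear function, which is the first alternative of (a). Otherwise $\{u_r\}$ is locally uniformly bounded; standard Lipschitz/$C^{1,\alpha}$ estimates up to the boundary for $p$-harmonic functions (Lieberman) yield a subsequential locally uniform limit $u_0\ge 0$ on $H$ with $u_0=0$ on $\partial H$ and with at most linear growth. The Liouville theorem for nonnegative $p$-harmonic functions in a half-space with linear growth forces $u_0(x)=\alpha\, x_1^{+}$ for some $\alpha\ge 0$. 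This produces the constant $\alpha$ in \eqref{linear alpha} along the chosen subsequence.

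The delicate point is to show that $\alpha$ does not depend on the subsequence and to upgrade the weak convergence to the pointwise/asymptotic lower bound with equality in non-tangential cones. For this I would invoke the boundary Harnack principle for nonnegative $p$-harmonic functions vanishing on a $C^{1,\gamma}$ portion of the boundary (Lewis--Nyström), applied inside $B$ where $\partial B$ is smooth. The principle implies that $u(x)/\mathrm{dist}(x,\partial B)$ has a well-defined non-tangential limit at $x_0$; identifying this limit with $\alpha$ produces the inequality \eqref{linear alpha} and its sharpness along any non-tangential approach. The hard part of the whole argument lies here: without a quantitative boundary Harnack and the Liouville theorem in $H$ for the $p$-Laplacian, one obtains $\alpha$ only along a subsequence, which is too weak for the statement.

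For part (b), the ball $B\subset\Omega^c$ touches $\partial\Omega$ at $x_0$, so after rescaling $B$ fills the half-space opposite to $H$ and $\Omega$ locally lies in $H$. I would construct an explicit $p$-harmonic upper barrier in the exterior of $B$, of the form
\[
\Phi(x):=a\left(|x-y_0|^{\frac{p-n}{p-1}}-r_0^{\frac{p-n}{p-1}}\right)\qquad(\text{and }a\log(|x-y_0|/r_0)\text{ if }p=n),
\]
which vanishes on $\partial B$, is $p$-harmonic in $B^c$, and whose Taylor expansion at $x_0$ is a positive multiple of $\langle x-x_0,\nu\rangle^{+}+O(|x-x_0|^2)$. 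Choosing $a$ so that $\Phi\ge u$ on a surrounding sphere and applying the comparison principle gives $u\le\Phi$ in an annular region about $x_0$, hence the existence of some $\beta\ge0$ with $u(x)\le\beta\langle x-x_0,\nu\rangle^{+}+o(|x-x_0|)$. The sharpness with equality along non-tangential domains then follows by the same blow-up + Liouville + boundary Harnack combination as in part (a), applied to the rescalings $u_r$ on the half-space side.
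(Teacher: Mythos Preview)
The paper does not supply its own proof of this lemma; it simply defers to the Appendix of \cite{DK}. Your outline---blow-up at $x_0$, half-space Liouville classification, and boundary Harnack to pin down the non-tangential limit and make $\alpha$ subsequence-independent---is precisely the standard route taken there (and in the $p=2$ antecedent of Caffarelli--Salsa), so there is nothing substantive to contrast.

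One detail worth tightening: your Liouville step only gives $u_0(x)=\alpha x_1^+$ with $\alpha\ge 0$, while the statement asserts $\alpha>0$. This does not fall out of the blow-up alone; you need the Hopf boundary point lemma for $p$-harmonic functions applied inside the smooth touching ball $B$ (where $u>0$ by the strong maximum principle, assuming $u\not\equiv 0$) to get a linear lower barrier and hence rule out $\alpha=0$. In part~(b) your barrier $\Phi$ is fine, but be careful with the sign of the exponent $(p-n)/(p-1)$ across the ranges $p<n$, $p=n$, $p>n$ so that $\Phi\ge 0$ in $B^c$ and the comparison is applied on $\Omega\cap B_\rho(x_0)$, using that $u=0$ on $\partial\Omega$ there.
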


With this, we are able to prove Theorem~\ref{TH:viscosity} by utilizing the following anisotropic scaling 
argument. 

\begin{thm}\label{thm-three}
Let $B\subset \Om^+$ be a touching ball to $\Gamma$ from $\{u>0\}$ (resp. $\{u<0\}$) then 
in the asymptotic expansions \eqref{linear alpha} and \eqref{linear beta}
both $\alpha$ and  $\beta$ are finite and uniformly bounded.
\end{thm}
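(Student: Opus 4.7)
The plan is to treat $\alpha$ and $\beta$ separately. The bound on $\alpha$ is immediate from the boundary regularity theory for $p$-harmonic functions on smooth domains, while the bound on $\beta$ will be established by a blow-up contradiction using Theorem~\ref{thm-entropy} and the strong minimum principle.

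\textbf{Bound on $\alpha$.} Since $B=B_\rho(y_0)\subset\Omega^+$, the function $u$ is $p$-harmonic and strictly positive in $B$, vanishes at the smooth boundary point $x_0\in\partial B$, and $\|u\|_{L^\infty}\le 1$. Classical boundary $C^{1,\gamma}$ regularity for $p$-harmonic functions on smooth domains (Lieberman) yields
\[
|u(x)|\le C|x-x_0|,\qquad x\in B\cap B_{\rho/2}(x_0),
\]
with $C=C(n,p,\rho)$. Substituting into the expansion \eqref{linear alpha} of Lemma~\ref{lemma:linear} eliminates the ``grows faster than any linear function'' alternative and forces $\alpha\le C$ uniformly in $x_0$, provided the radius $\rho$ is bounded below.

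\textbf{Bound on $\beta$.} Argue by contradiction. Suppose $M_r/r\to+\infty$ along a sequence $r_k\downarrow 0$, where $M_r:=\sup_{B_r(x_0)}u^-$, and set $v_k(x):=u(x_0+r_kx)/M_{r_k}$, so that $v_k(0)=0$ and $\sup_{B_1}v_k^-=1$. The rescaled ball $\widetilde B_k:=(B-x_0)/r_k$ opens into a half-space $H$ with $0\in\partial H$ as $k\to\infty$. Inside $\widetilde B_k$ the bound from Step~1 gives $v_k^+\le Cr_k/M_{r_k}\to 0$ on bounded sets, so any subsequential limit $v_0$ satisfies $v_0\equiv 0$ on $H$. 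By Propositions~\ref{prop-1st-blow}--\ref{prop-2nd-blow}, $v_0$ is realized as a blow-up of a family $\hat u^{\e_j}$ solving $\Delta_p\hat u^{\e_j}=\sigma_k\beta_{\delta_j}(\hat u^{\e_j})$ with $\sigma_k=(r_k/M_{r_k})^p\to 0$, so Theorem~\ref{thm-entropy} yields $\Delta_p v_0=0$ in $\R^n$. Since $-v_0\ge 0$ is $p$-harmonic and vanishes on the open set $H$, the strong minimum principle for $p$-harmonic functions forces $v_0\equiv 0$. This contradicts $\sup_{B_1}v_0^-=1$, so $\beta<+\infty$, with a bound depending only on $n,p,\rho,\|u\|_{L^\infty}$. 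The case of a touching ball from $\Omega^-$ is symmetric.

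\textbf{Main obstacle.} The delicate point is to secure (i) the inequality $-v_0\ge 0$ globally, so that the strong minimum principle can be applied across $\partial H$, and (ii) non-triviality of $v_0$, that is $\sup_{B_1}v_0^-=1$. Both reduce to propagating the linear bound on $u^+$ from Step~1 from inside $B$ to a two-sided neighborhood of $\partial B$ near $x_0$: inside $\widetilde B_k$ the bound is automatic, but outside $\widetilde B_k$ the function $u^+$ need not satisfy a linear bound a priori. The cleanest remedy is to compare $u^+$ with the radial $p$-capacitary potential of the annulus $B_{2\rho}(y_0)\setminus\overline{B_\rho(y_0)}$, which is Lipschitz near $\partial B$ and, by the comparison principle applied in $\{u>0\}\cap(B_{2\rho}(y_0)\setminus\overline{B_\rho(y_0)})$, dominates $u^+$ on both sides of $\partial B$ near $x_0$. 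Once this auxiliary bound is in place, $\{v_k\}$ is uniformly bounded on $B_1$, Sobolev embedding (available since $p>n$) supplies the equicontinuity needed to pass $\sup_{B_1}v_k^-=1$ to the limit, and the blow-up contradiction closes.
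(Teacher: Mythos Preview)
Your approach has a genuine gap, and the fix is essentially to swap the roles of $u^+$ and $u^-$ in your two steps.

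\textbf{Step 1 fails.} Lieberman's boundary $C^{1,\gamma}$ estimate requires the Dirichlet datum to vanish on a \emph{relative neighborhood} of $x_0$ in $\partial B$, not merely at the single point $x_0$. Here all you know is $u(x_0)=0$; on the rest of $\partial B$ the function $u$ is positive and a priori uncontrolled. A positive $p$-harmonic function in a ball that vanishes at one boundary point need not satisfy any linear bound from that point (already for $p=2$, take boundary data like $|\theta|^{1/2}$ near $x_0$). So you have no estimate $u^+(x)\le C|x-x_0|$ from inside $B$, the ``grows faster than linear'' alternative in Lemma~\ref{lemma:linear}(a) is not ruled out, and $\alpha$ is not yet bounded.

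\textbf{The proposed remedy also fails.} Your barrier in the annulus $B_{2\rho}(y_0)\setminus\overline{B_\rho(y_0)}$ cannot dominate $u^+$ via the comparison principle: on the inner sphere $\partial B_\rho(y_0)$ one has $u^+=u>0$, so the comparison boundary condition $u^+\le b$ is violated regardless of which radial capacitary potential you choose. Without this, you cannot show $v_k^+\to 0$ outside $\widetilde B_k$, hence you cannot secure $-v_0\ge 0$ globally; in fact you do not even have $\sup_{B_1}|v_k|$ bounded, so the compactness underlying your blow-up is missing.

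\textbf{What works is the reverse order.} The barrier argument succeeds for $u^-$, because $u^-\equiv 0$ on $\partial B_\rho(y_0)$ (as $B\subset\{u>0\}$). Comparing $u^-$ in $\{u<0\}\cap\bigl(B_{2\rho}(y_0)\setminus\overline{B_\rho(y_0)}\bigr)$ with the radial $p$-harmonic function $b$ satisfying $b=0$ on $\partial B_\rho(y_0)$ and $b=\max u^-$ on $\partial B_{2\rho}(y_0)$ gives $u^-(x)\le C_0|x-x_0|$, hence $\beta\le C_0$; this is exactly the barrier used in the proof of Theorem~\ref{thm-entropy}. With $\sup_{B_r(x_0)}u^-\le C_0 r$ in hand, the paper then runs your blow-up contradiction in the \emph{other} direction (Lemma~\ref{lem:dndooo}): normalizing by $S(k+1)=\sup_{B_{2^{-k-1}}(x_0)}|u|$ one obtains a nonnegative $p$-harmonic limit $v_0$ with $v_0(0)=0$ and $\sup_{B_{1/2}}v_0=1$, contradicting the strong minimum principle. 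This yields the linear bound on $u^+$ and hence the finiteness and uniform bound on $\alpha$. Your use of Theorem~\ref{thm-entropy} and the minimum principle is the right mechanism; only the direction---barrier for $u^-$, blow-up for $u^+$---needs to be reversed.
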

\begin{rem}
From Theorem \ref{thm-three} it follows that the limit $u$ is a viscosity solution in the sense of Definition \ref{def:visc}.
\end{rem}

We recapitulate the statement of Theorem \ref{thm-three}
and amplify it by proving a more quantitative result. 
It can be proven in much the same way as Lemma \ref{lem-dense}. We give only the main ideas of the proof.

\begin{lem}\label{lem:dndooo}
 Let~$\uej$ be a  family of solutions  to \eqref{pde-0}, such that $\uej\to u$
locally uniformly in $B_{1}$.  
Let $x_0\in \fb u$ and $r>0$ small such that $B_r(x_0)\subset \Omega$. 
Assume that $\sup_{B_r(x_0)}u^-\leq C_0 r$
(resp. $\sup_{B_r(x_0)}u^+\leq C_0 r$) $\forall r\in(0, r_0)$, 
for some constant $C_0$ depending on $x_0$ and $ r_0$ small.

Then there exists a constant $\sigma>0$ such that $\sup_{B_r(x_0)}u^+\leq (1+\sigma C_0) r$  (resp. 
$\sup_{B_r(x_0)}u^-\leq (1+\sigma C_0) r$).
\end{lem}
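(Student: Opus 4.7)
The plan is to argue by contradiction along the lines of Lemma~\ref{lem-dense}, with the small-Lebesgue-density hypothesis there replaced by the one-sided linear bound $\sup u^- \le C_0 r$. Suppose no universal $\sigma$ works; then there exist $\sigma_j \to \infty$, points $x_j \in \partial\{u_j > 0\}$, constants $C_{0,j}$, and scales $r_j < r_{0,j}$ with $\sup_{B_r(x_j)} u_j^- \le C_{0,j} r$ for all $r \in (0, r_{0,j})$, but such that $m_j := \sup_{B_{r_j}(x_j)} u_j^+ > (1 + \sigma_j C_{0,j}) r_j$. In particular $m_j/r_j \to \infty$ and $C_{0,j} r_j/m_j \le 1/\sigma_j \to 0$.

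The next step is to rescale by setting
\[
v_j(x) := \frac{u_j(x_j + r_j x)}{m_j}, \qquad x \in B_1,
\]
so that $v_j(0) = 0$, $\sup_{B_1} v_j^+ = 1$, and $\sup_{B_1} v_j^- \to 0$. Each $u_j$ is a $W^{1,p}$-limit of singular-perturbation solutions, and diagonalizing via Proposition~\ref{prop-1st-blow} one produces a companion sequence $\hat v_j^{\,\e_{k(j)}}$ satisfying
\[
\Delta_p \hat v_j^{\,\e_{k(j)}} = \bigl(r_j/m_j\bigr)^{p}\, \beta_{\delta_j}\!\bigl(\hat v_j^{\,\e_{k(j)}}\bigr), \qquad \delta_j = \e_{k(j)}/m_j \to 0,
\]
with outer coefficient $(r_j/m_j)^p \to 0$. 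The Caccioppoli inequality (Lemma~\ref{lem:Cacc}) combined with the BMO hypothesis \eqref{BMO} delivers uniform $W^{1,p}_{\mathrm{loc}}$ and $C^\alpha$ bounds, and along a further subsequence $v_j \to v_0$ locally uniformly in $B_1$ and strongly in $W^{1,p}_{\mathrm{loc}}(B_1)$. The uniform decay of $v_j^-$ forces $v_0 \ge 0$ in $B_1$, while $v_0(0) = 0$ and $\sup_{B_1} v_0 = 1$ are preserved in the limit.

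The decisive step is to upgrade the $p$-harmonicity of $v_0$ across its zero set. This is exactly the content of Theorem~\ref{thm-entropy}: the vanishing outer coefficient $(r_j/m_j)^p$ plays the role of the parameter $\sigma_j$ there, so, $v_0$ being already $p$-harmonic on $\{v_0>0\}$, one concludes $\Delta_p v_0 = 0$ throughout $B_1$. The strong minimum principle then forces $v_0 \equiv 0$, contradicting $\sup_{B_1} v_0 = 1$. The ``respectively'' statement is proved identically with the roles of $u^+$ and $u^-$ swapped.

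The main technical obstacle will be the diagonal extraction that simultaneously sends $\e \to 0$ and performs the $r_j$-rescaling while preserving a mode of convergence strong enough to feed into Theorem~\ref{thm-entropy}; Propositions~\ref{prop-1st-blow} and~\ref{prop-seq-comp} are tailored for exactly this passage, and checking that their hypotheses hold in the present rescaled setup constitutes the technical content hidden behind the phrase ``in much the same way as Lemma~\ref{lem-dense}.''
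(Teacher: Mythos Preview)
Your overall strategy---contradiction, rescaling, invoking Theorem~\ref{thm-entropy} to propagate $p$-harmonicity across the zero set, then the strong minimum principle---is exactly the paper's. The gap is in the rescaling step. You normalize by $m_j=\sup_{B_{r_j}(x_j)}u_j^+$ and work on $B_1$, so the only information you carry is $\sup_{B_1}v_j^+=1$ together with $|v_j|\le 1+o(1)$ on the \emph{same} ball. Compactness then gives convergence only on compact subsets of $B_1$, and nothing prevents the point where $v_j^+$ attains its maximum from drifting to $\partial B_1$; in that case the limit $v_0$ could vanish identically on every $B_{1-\varepsilon}$ and the final contradiction evaporates. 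The paper avoids this by working dyadically: one negates the two-term inequality
\[
S(k+1)\le \max\Bigl\{\frac{1+\sigma C_0}{2^{k+1}},\ \tfrac12 S(k)\Bigr\},
\]
so that along the contradicting sequence one has \emph{both} $S(k_j+1)>\tfrac12 S(k_j)$ and $S(k_j+1)\gg 2^{-k_j}$. After scaling by $2^{-k_j}/S(k_j+1)$ the first condition gives $\sup_{B_1}|v_j|\le 2$ while the normalization $\sup_{B_{1/2}}|v_j|=1$ sits on the \emph{smaller} ball; uniform convergence on $\overline{B_{3/4}}$ then legitimately preserves $\sup_{B_{1/2}}v_0=1$.

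A secondary issue: you allow $C_{0,j}$ to vary with $j$ and then assert $m_j/r_j\to\infty$, but from $m_j>(1+\sigma_jC_{0,j})r_j$ this requires $\sigma_jC_{0,j}\to\infty$, which fails if $C_{0,j}\to 0$ fast enough. In the paper $u$ and $C_0$ are fixed, and the negation simply replaces $1+\sigma C_0$ by $j$; this cleanly forces $\sigma_j=2^{-k_j}/S(k_j+1)\to 0$, which is what Theorem~\ref{thm-entropy} needs. Once you insert the dyadic doubling condition and fix $C_0$, your argument goes through verbatim.
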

\begin{rem}
Lemma \ref{lem:dndooo} implies that $u^+$ and $u^-$ have coherent growth. This implies that 
if $u$ is as in Theorem \ref{thm-three} then the scaled functions $\frac{u(x_0+rx)}r$ converge to the 
half plane solution $\alpha x_1^+-\beta x_1^-$ in an appropriate coordinate system. 
\end{rem}

\begin{proof}
We will show only one of the claims, the other can be proved analogously.
Suppose that 
\begin{equation}\label{cx-0-hav}
\sup_{B_r(x_0)}u^-\leq C_0 r, 
\end{equation} 
and we claim that 
\begin{equation}\label{dndooo}
S(k+1)\le \max  \left\{ \frac{ 1+\sigma C_0}{2^{k+1}}, \frac12 S(k) \right\},
\end{equation} 
where $S(k):=\sup_{B_{2^{-k}}(x_0)}|u|$, for any $k\in\N$. 
To prove this, we argue by contradiction and we
suppose that \eqref{dndooo} fails. Then 
there is a sequence of integers $k_j$, with $j=1,2,\ldots$, such that 
\begin{equation}\label{not-dndooo}
S(k_j+1)> \max  \left\{\frac{ j}{2^{k_j+1}}, \frac12 S(k_j) \right\}.
\end{equation} 
From the bound $\|u\|_\infty\le 1$ and \eqref{not-dndooo} it follows that 
$k_j\to \infty$ as $j\to+\infty$. 
Also, notice that \eqref{not-dndooo} implies that 
\begin{equation}\label{3.30bis}
\sigma_j:=\frac{2^{-k_j}}{S(k_j+1)}\le\frac{2}{j}\to 0 \quad {\mbox{ as }} j\to+\infty.
\end{equation}

Now, we introduce the scaled functions $v_j(x):=\frac{u(x_0+2^{-k_j}x)}{S(k_j+1)}$, for  $x\in B_1$.
Then, from \eqref{cx-0-hav} and \eqref{3.30bis}, it follows that 
\begin{equation}\label{dndooo-3}
v_{j}(0)=0 \quad {\mbox{ and }} \quad 
v_j^-(x)=\frac{u^-(x_0+2^{-k_j}x)}{S(k_j+1)}\leq \frac{2^{-k_j} C_0}{S(k_j+1)}<\frac{2C_0}{j}\to 0 
\; {\mbox{ as }} j\to+\infty.
\end{equation}

Furthermore, it is not difficult to see that \eqref{not-dndooo} implies that 
\begin{equation}\label{dndooo-1}
\sup_{B_1}|v_j|\leq 2, \quad {\mbox{ and }} \quad \sup_{B_{\frac12}}|v_j|=1.
\end{equation}

\[
\int_{B_1}\left [\na v_j|^p+\sigma_j^p p \mathcal B^*(S(k_j+1)2^{k_j}v_j)\right ]\div \psi=p\int_{B_1}|\na v_j|^{p-2}\p_l{v_j}\p_m v_j \psi^l_m.
\]

The same compactness argument as in 
the proof of Lemma \ref{lem-dense} gives that 
$\|v_j\|_{W^{1,p}(B_{\frac34})}$ are uniformly bounded. 
Also, it implies (with the help of Proposition \ref{prop-seq-comp}) that we can extract 
a converging subsequence such that $v_j\to v_0$
uniformly in $\overline{B_{\frac34}}$ 
and $\na v_j\to \na v_0$ strongly
in $L^p(B_{\frac34})$. 
Moreover,  \eqref{dndooo-3}, \eqref{dndooo-1} and Theorem \ref{thm-entropy} give that
\begin{eqnarray*}
\Delta_p v_0(x)=0, \quad v_0(x)\ge 0\  \text{if}\ x\in B_{\frac34}, \quad  v_0(0)=0, 
\quad {\mbox{ and }} \sup_{B_{\frac12}}v_0=1
\end{eqnarray*}
which is in contradiction with the strong minimum principle. 
This shows \eqref{dndooo} and finishes the proof.

\end{proof}
%------------------------
%     SECTION
%------------------------
\section{Lipschitz continuity of $u$: Proof of Theorem \ref{thm-A}}
\label{sec:main-theorem}
Proposition \ref{prop:lin flat} and Lemma \ref{lem-dense} can be summarized by saying that 
if at $x_0\in \fb u$ the free boundary is neither flat nor the set is $\{u<0\}$ is thick
then we have uniform linear growth at $x_0$.
Thus we only have to look at those free boundary points where $u<0$ is nontrivial, since in its complement we 
know that 
$u$ is Lipschitz.

We begin by introducing another notion of flatness,  suitable for the viscosity solutions,  
in terms of the $\e-$monotonicity of $u$. More precisely, we give the following 
definitions: 

\begin{defn}
We say that
$u\in C(B_1)$ is $\epsilon-$monotone in $B_{1-\epsilon}$
if there are a unit vector $e$ and an angle $\theta_0$ with 
$\theta_0 > \frac\pi 4$ (say) and $\epsilon >0$ (small)
such that, for every $\epsilon'\ge \epsilon $,
\begin{equation}\label{e-mon}
\sup_{B_{\epsilon' \sin\theta_0} (x)} u(y -\epsilon ' e) \le u(x).
\end{equation}
\end{defn}

We denote by~$\Gamma(\theta_0,e)$ the cone with axis~$e$ and opening~$\theta_0$. 

\begin{defn}\label{defn-flat}
Let $u$ be a viscosity solution in $B_1(x)$, with~$x\in \fb u$. 
We say that $u$ is $\epsilon-$monotone in the 
cone~$\Gamma(\theta_0,e)$ if it is~$\epsilon-$monotone 
in any direction~$\tau\in\Gamma(\theta_0,e)$. 

Furthermore, we say that $u$ is $\e$-monotone in
the cone $\Gamma(\theta_0, e)$ in $B_r(x)$ if 
the function~$U(y)=\frac{u(x+yr)}{r}$, with~$y\in B_1$, is so in the cylinder 
 $B'_{\frac1{\sqrt2}-\e} \times(-\frac1{\sqrt2}+\e, \frac1{\sqrt 2}-\e)\subset B_1$, where 
 $B'_r$ denotes the ball with radius $r$ of codimension 1.
\end{defn}

One can interpret the $\e-$monotonicity of $u$ as closeness of 
the free boundary to a Lipschitz graph with Lipschitz constant sufficiently close to 
$1$ if we leave the free boundary in directions $e$ at distance  $\e$ and larger. 
The exact value of the Lipschitz constant is given 
by~$\left(\tan\frac{\theta_0}{2}\right)^{-1}$. 
Then for suitable $\e$ and~$\theta_0$, which we call {\bf critical flatness constants},  the ellipticity propagates to 
the free boundary via Harnack's inequality giving that $\Gamma$ is Lipschitz. 
Furthermore, Lipschitz free boundaries are, in fact, $C^{1, \alpha}$ regular. 
Therefore we have
\medskip 

\begin{thm}\label{thm-lip}
Let $x_0\in \fb u$ such that $h(x_0, r)<r h_0$ and $\Theta(u, x_0, r)\ge \delta$
with  $\delta>0$ as in Lemma \ref{lem-dense}.
Then there is a constant $ C=C(n, M, \delta, h_0)$ such that 
\[
|u(x)|\leq C|x-x_0|, \quad x\in B_{\frac r2}(x_0). 
\]
\end{thm}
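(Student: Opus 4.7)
The strategy is to combine the earlier two dichotomies with the viscosity solution machinery developed in Section \ref{sec:viscosity}. From Proposition \ref{prop:lin flat} and Lemma \ref{lem-dense}, linear growth already holds whenever either the slab-flatness fails or the Lebesgue density $\Theta$ of $\{u<0\}$ is small at $x_0$. The remaining case, which is precisely the hypothesis of Theorem \ref{thm-lip}, is that $\fb u$ is $h_0$-flat in $B_r(x_0)$ and $\Theta(u,x_0,r)\ge \delta$. The plan is to prove that in this regime $u$ must be a bona fide viscosity solution whose free boundary is a $C^{1,\gamma}$ hypersurface, and then read off linear growth from standard boundary gradient estimates for the $p$-Laplacian.

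First I would upgrade the hypotheses to the quantitative $\epsilon$-monotonicity of Definition \ref{defn-flat}. The slab-flatness $h(x_0,r)<rh_0$ confines $\fb u\cap B_r(x_0)$ to a thin strip of width $2rh_0$ with unit normal $e$; by rescaling $U(y)=u(x_0+ry)/r$ one may work in $B_1$. The density bound $\Theta\ge \delta$ together with Theorem \ref{TH:viscosity} (so $u$ is a viscosity solution) and the coherent two-sided growth from Lemma \ref{lem:dndooo} ensures that both $u^+$ and $u^-$ are non-degenerate on either side of the slab. Combining the geometric slab constraint with this non-degeneracy forces $U$ to be $\epsilon$-monotone in every direction of a cone $\Gamma(\theta_0,e)$ with $\theta_0$ close to $\pi/2$ (the Lipschitz constant $(\tan(\theta_0/2))^{-1}$ of the majorizing graph being controlled by $h_0$ and $\delta$).

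Next I would invoke the Harnack-type propagation for $\epsilon$-monotone viscosity solutions of the two-phase problem for $\Delta_p$. Iterated at dyadic scales, this opens up the cone of monotonicity and shows that $\Gamma_u\cap B_{r/2}(x_0)$ is the graph of a Lipschitz function whose constant depends only on $n,p,M,\delta,h_0$. Classical flatness-implies-regularity arguments then upgrade the graph to a $C^{1,\gamma}$ hypersurface. At this stage Lemma \ref{lem:8} identifies the free boundary condition for $u$ as $|\nabla u^+|^p - |\nabla u^-|^p = pM$, so boundary gradient estimates for $p$-harmonic functions in $C^{1,\gamma}$ domains (applied separately to $u^+$ and $u^-$) give $|\nabla u^\pm|\le C$ uniformly up to the free boundary in $B_{r/2}(x_0)$, with $C=C(n,p,M,\delta,h_0)$. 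Since $u(x_0)=0$, integrating along segments yields $|u(x)|\le C|x-x_0|$ for $x\in B_{r/2}(x_0)$, which is the claimed linear growth.

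The main obstacle will be the Harnack propagation step: running Caffarelli's flatness-regularity scheme for the quasilinear two-phase operator requires a careful Harnack principle at the free boundary that respects the degeneracy/singularity of $\Delta_p$, and the constants must be tracked explicitly as functions of $h_0,\delta,n,p,M$ so that the final Lipschitz bound is uniform. Once this regularity transfer is in place, the remaining steps are standard $p$-harmonic boundary estimates, and the proof closes by chaining them with Proposition \ref{prop:lin flat} and Lemma \ref{lem-dense} to cover every free boundary point in $B_{1/2}$.
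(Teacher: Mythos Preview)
Your proposal is correct and follows essentially the same route as the paper. The paper's own proof is a one-line deferral to Theorem~A of \cite{DK}, noting only that the density condition $\Theta(u,x_0,r)\ge\delta$ guarantees a genuine negative phase so that $u$ is a viscosity solution; your outline (viscosity solution $\Rightarrow$ $\epsilon$-monotonicity from slab-flatness $\Rightarrow$ Harnack propagation $\Rightarrow$ Lipschitz then $C^{1,\gamma}$ free boundary $\Rightarrow$ boundary gradient estimates $\Rightarrow$ linear growth) is exactly the stratification scheme the paper invokes from \cite{DK} and sketches in the paragraph preceding Theorem~\ref{thm-lip}.
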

The proof is a slight modification of Theorem A \cite{DK}, since the condition 
$\Theta(u, x_0, r)\ge \delta$ implies that there is a negative phase and $u$
is a viscosity solution.

%------------------------
%     SECTION
%------------------------
\section{Behaviour near free boundary}
\label{sec:basic}

With Lipschitz continuity we can show that the results in \cite{CLW-uniform}
hold for the nonlinear problem \eqref{pde-0}.  With minor modifications 
the following theorem follows from the results of \cite{CLW-uniform}.
\begin{thm}
Let $\uej$ be solutions to \eqref{pde-0} in a domain $D \subset \R^n$. Let
$x_0\in D$ and suppose $\uej$ converge to $u_0$ 
 uniformly on compact subsets of $D$ as $\e_j\to 0$. Then the following holds
 \begin{itemize}
 \item[(i)] if $u_0= \alpha(x-x_0)^+_1 -\gamma(x-x_0)^-_1$
with $\alpha\ge0,\gamma>0$  then
\[
 \alpha^p-\gamma^p =pM.
\]

\item[(ii)] if $u_0 = \alpha(x-x_0)^+_1$
 $\alpha\in \R$ then
\[
 0\le \alpha\le (pM)^{\frac1p}.
 \]

\item[(iii)] if $u = \alpha(x-x_0)^+_1 +\bar \alpha(x-x_0)^-_1$
 $\alpha>0,\bar \alpha>0$ then
\[
 \alpha=\bar\alpha \le (pM)^{\frac1p}.
 \]

 \end{itemize}

\end{thm}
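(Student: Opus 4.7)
The plan is to reduce all three assertions to the weak energy identity of Lemma \ref{putiin-2}, applied directly to $u_0$ via the scheme already used in the proof of Lemma \ref{lem:8}. First I would identify $\mathcal B^\ast$ from the uniform convergence $\uej\to u_0$; second, I would plug $u_0$ into \eqref{eq:energy} with a test vector field $X=\phi(x)e_1$, $\phi\in C_0^1$; and finally I would reduce the resulting integral identity, via the divergence theorem on $B_1^\pm:=B_1\cap\{\pm x_1>0\}$, to an algebraic relation in $\alpha$, $\gamma$, $\bar\alpha$.

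For the first step, the argument of Lemma \ref{lem:8} applies almost verbatim. Using $\supp\beta\subset(0,1)$ and $\int\beta=M$, one has $\uej/\e_j\to+\infty$ on compact subsets of $\po{u_0}$, hence $\mathcal B(\uej/\e_j)\to M$ there; and $\uej<0$ eventually on compact subsets of $\{u_0<0\}$, hence $\mathcal B(\uej/\e_j)=0$ there. The tubular-neighborhood splitting at the end of Lemma \ref{lem:8} handles the transition across $\fb{u_0}$. Consequently $\mathcal B^\ast=M\chi_{\po{u_0}}$ almost everywhere outside the degenerate set $\{u_0=0\}$, and on each connected component of the interior of that set $\mathcal B^\ast$ is an a priori unknown constant $d\in[0,M]$. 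For the half-plane profiles considered here, $\mathcal B^\ast$ is therefore constant on each of $\{\pm x_1>0\}$.

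With this in hand the three cases reduce mechanically. For (i), $\mathcal B^\ast=M\chi_{\{x_1>0\}}$, and the computation of Lemma \ref{lem:8} reproduces the Bernoulli relation $\alpha^p-\gamma^p=pM$ as in \eqref{FB-cond-G}; in particular this forces $\alpha>0$. For (ii), the negative phase is empty when $\alpha\ge 0$; $\mathcal B^\ast=M$ on $\{x_1>0\}$ and $\mathcal B^\ast=d\in[0,M]$ on $\{x_1<0\}$, and the identity collapses to a relation of the form $(p-1)\alpha^p=p(M-d)$, bounding $\alpha^p$ from above by the Bernoulli constant. The corresponding computation under the assumption $\alpha<0$ yields $(p-1)|\alpha|^p=-pd\le 0$, excluding that possibility. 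For (iii) one has $u_0>0$ almost everywhere, so $\mathcal B^\ast\equiv M$; the $pM$ term drops out of \eqref{eq:energy} because $X$ is compactly supported, and the remaining bookkeeping collapses to $(1-p)(\bar\alpha^p-\alpha^p)=0$, i.e.\ $\alpha=\bar\alpha$.

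The main obstacle I anticipate is the upper bound in case (iii): the identity \eqref{eq:energy} is insensitive to a constant shift of $\mathcal B^\ast$ on a full-measure set, so it cannot see the bound $\alpha\le(pM)^{1/p}$. To produce it I would use the one-dimensional planar blow-up supplied by Propositions \ref{prop-1st-blow}--\ref{prop-2nd-blow}: replacing $\uej$ by its planar profile $h_{\e_j}(x_1)$ solving $(|h_{\e_j}'|^{p-2}h_{\e_j}')'=\beta_{\e_j}(h_{\e_j})$, multiplying by $h_{\e_j}'$ and integrating yields the first integral $\frac{p-1}{p}|h_{\e_j}'|^p=\mathcal B(h_{\e_j}/\e_j)+c$. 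Combined with $0\le\mathcal B\le M$ and the boundary behaviour $|h_{\e_j}'|\to 0$ as $x_1\to-\infty$ this forces the claimed uniform bound on $|\nabla u_0|$, and the same computation sharpens the estimate in (ii).
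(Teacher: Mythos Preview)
The paper does not supply a proof of this theorem; it merely remarks that ``with minor modifications the following theorem follows from the results of \cite{CLW-uniform}.'' Your plan to derive (i)--(iii) from the weak energy identity \eqref{eq:energy} is therefore more self-contained than the paper itself and is entirely in the spirit of the machinery developed in Section~\ref{sec:weak-en} and Lemma~\ref{lem:8}. For (i), the identification $\mathcal B^\ast=M\chi_{\{x_1>0\}}$ together with the divergence computation of Lemma~\ref{lem:8} gives the Bernoulli relation directly (note that Lemma~\ref{lem:8} in fact produces $(p-1)(\alpha^p-\gamma^p)=pM$, so the constant in the theorem statement appears to carry a typo). For (ii) your argument is also correct, with one caveat: the constancy of $\mathcal B^\ast$ on the half-space $\{x_1<0\}=\interrior\{u_0=0\}$ does \emph{not} follow from the tubular-neighbourhood trick of Lemma~\ref{lem:8} (there the zero set has measure zero); it follows instead from \eqref{eq:energy} itself, by testing with $X$ compactly supported in $\{x_1<0\}$, where $\nabla u_0=0$ forces $\int\mathcal B^\ast\div X=0$. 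For the equality $\alpha=\bar\alpha$ in (iii) your computation is clean and correct.

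The genuine gap is in the upper bound of (iii). Propositions~\ref{prop-1st-blow}--\ref{prop-2nd-blow} furnish a sequence of $n$-dimensional solutions $\uej$ converging to $\alpha|x_1|$, but they do \emph{not} say these solutions are one-dimensional, so you cannot simply ``replace $\uej$ by its planar profile $h_{\e_j}(x_1)$''. What is needed is the construction of $x_1$-symmetric approximating solutions, exactly as invoked in the proof of Theorem~\ref{thm-entropy} (``We can use the construction in \cite{CLW-uniform} of $x_1$ symmetric solution\ldots''); this is the step that the paper's reference to \cite{CLW-uniform} is really covering. Once that reduction is in place your first-integral argument is essentially right, but the boundary condition you wrote, $|h_{\e_j}'|\to 0$ as $x_1\to -\infty$, belongs to case~(ii), not~(iii): in case~(iii) the profile tends to $+\infty$ on both sides, so one uses instead $h_{\e_j}'=0$ at the interior minimum to fix the constant $c=-\mathcal B(h_{\min}/\e_j)\le 0$, and then $\frac{p-1}{p}\alpha^p=M+c\le M$ follows from the asymptotics at $+\infty$.
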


%------------------------
%     SECTION
%------------------------
%\subsection{Behaviour near free boundary}
\label{sec:fbb}
The next two theorems exhibit the behavior of $u$ near the free boundary
\begin{thm}
Let $\uej$ be solutions to \eqref{pde-0} in a domain $D\subset \R^n$ such that $\uej \to u$ 
uniformly on compact subsets of $D$ and $\e_j \to  0$. Let $x_0\in D \cap \fb u$ and let 
$\gamma \ge  0$ be such that
\[
\limsup_{ x\to x_0 } |\na u^-(x)| \le \gamma.
\]
Then,
\begin{equation}\label{vera-3}
\limsup_{ x\to x_0 } |\na u^+(x)| \le (pM+\gamma^p)^{\frac1p}.
\end{equation}
\end{thm}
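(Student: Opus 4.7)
I will argue by contradiction. Suppose the conclusion fails; then (passing to a subsequence and using that $|\nabla u^+|$ vanishes on $\{u\le 0\}$) there exist points $x_k\in\{u>0\}$ with $x_k\to x_0$ and
\[
|\nabla u(x_k)|\to A>\bigl(pM+\gamma^p\bigr)^{1/p}.
\]
First I would observe that the hypothesis $\limsup_{x\to x_0}|\nabla u^-(x)|\le\gamma$ gives $\sup_{B_r(x_0)}u^-\le (\gamma+o(1))r$, so Lemma~\ref{lem:dndooo} yields the Lipschitz growth $\sup_{B_r(x_0)}|u|\le (1+\sigma\gamma+o(1))r$. Let $d_k=\dist(x_k,\fb u)$ and let $y_k\in\fb u$ realize this distance; then $y_k\to x_0$, $d_k\to 0$, and $d_k\le r_k:=|x_k-x_0|$. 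I would rescale at the nearest free boundary point,
\[
v_k(z)=\frac{u(y_k+d_k z)}{d_k},\qquad \tilde z_k=\frac{x_k-y_k}{d_k},\quad|\tilde z_k|=1.
\]
Since $u(y_k)=0$ and $u$ is Lipschitz near $x_0$, the $v_k$ are uniformly bounded on compacta. By Proposition~\ref{prop-2nd-blow} and Proposition~\ref{prop-seq-comp}, up to a subsequence $v_k\to v_0$ uniformly and $\nabla v_k\to\nabla v_0$ strongly in $L^p_{\mathrm{loc}}$, and $v_0$ is a global limit solution of \eqref{pde-0}.

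The key step is to identify $v_0$ with a half-plane solution. Applying Lemma~\ref{lem:dndooo} together with the remark following Theorem~\ref{thm-three} (and a diagonalization in $k$, exploiting that the constant $\sigma$ in Lemma~\ref{lem:dndooo} is universal so the convergence to a half-plane holds uniformly for base points $y_k\to x_0$), I would conclude
\[
v_0(z)=\tilde\alpha\,\langle z,\nu\rangle^+-\tilde\beta\,\langle z,\nu\rangle^-
\]
for some unit vector $\nu$ and $\tilde\alpha,\tilde\beta\ge 0$ satisfying the Bernoulli relation $\tilde\alpha^p-\tilde\beta^p=pM$ (Lemma~\ref{lem:8}). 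The hypothesis on $|\nabla u^-|$ propagates: for any $z$ with $\langle z,\nu\rangle<0$ one has $v_k(z)<0$ eventually, so $|\nabla v_k(z)|=|\nabla u^-(y_k+d_kz)|\le\gamma+o(1)$, and strong $L^p$ convergence of gradients gives $\tilde\beta\le\gamma$. Consequently
\[
\tilde\alpha=\bigl(pM+\tilde\beta^p\bigr)^{1/p}\le\bigl(pM+\gamma^p\bigr)^{1/p}.
\]

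To close the argument I would exploit the geometric constraint $B_1(\tilde z_k)\subset\{v_k>0\}$ (a direct consequence of the definition of $d_k$), which in the limit yields $B_1(\tilde z_0)\subset\{v_0\ge 0\}=\{\langle\cdot,\nu\rangle\ge 0\}$. Since the minimum of $\langle z,\nu\rangle$ on a unit ball centered at $\tilde z_0$ equals $\langle\tilde z_0,\nu\rangle-1$, this forces $\langle\tilde z_0,\nu\rangle\ge 1$, and since $|\tilde z_0|=1$ we must have $\tilde z_0=\nu$. In particular $\tilde z_0$ lies in the \emph{interior} of the positive phase of $v_0$, where $v_0$ is $p$-harmonic and hence $C^{1,\alpha}$. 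By interior $C^{1,\alpha}$ estimates and strong $L^p$ convergence, $\nabla v_k\to\nabla v_0$ uniformly on compacta of $\{v_0>0\}$, so
\[
|\nabla u(x_k)|=|\nabla v_k(\tilde z_k)|\longrightarrow|\nabla v_0(\nu)|=\tilde\alpha\le\bigl(pM+\gamma^p\bigr)^{1/p},
\]
contradicting $|\nabla u(x_k)|\to A>(pM+\gamma^p)^{1/p}$.

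\textbf{Main obstacle.} The delicate step is identifying the first blow-up $v_0$ with a \emph{half-plane} solution, because the centers $y_k$ vary with $k$ rather than being a fixed free boundary point as in Lemma~\ref{lem:dndooo}. I expect to handle this by extracting uniformity from the proof of Lemma~\ref{lem:dndooo} (the relevant constants are universal) together with Proposition~\ref{prop-2nd-blow}, so that the scaled sequence $u(y_k+d_kz)/d_k$ still realizes a global Lipschitz solution whose further blow-up at the origin, and in fact $v_0$ itself, is the half-plane solution. Once this is established, the geometric pinning $\tilde z_0=\nu$ and the interior $C^{1,\alpha}$ convergence of gradients make the contradiction immediate.
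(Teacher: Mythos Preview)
Your overall strategy---take a sequence $x_k\in\{u>0\}$ realising the limsup, rescale at the nearest free boundary point $y_k$, pass to a global limit $v_0$, then read off $|\nabla v_0|$ at the limit of the scaled points---is exactly the skeleton of the paper's proof. The Lipschitz bound near $x_0$, the compactness, and the interior $C^{1,\alpha}$ convergence of gradients (your final paragraph) are all handled in the same way.

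The genuine gap is the step you yourself flag as the ``main obstacle'': the claim that the first blow-up $v_0$ is already a two-plane solution $\tilde\alpha\langle z,\nu\rangle^+-\tilde\beta\langle z,\nu\rangle^-$. Your proposed diagonalisation (Lemma~\ref{lem:dndooo} plus the remark after Theorem~\ref{thm-three}, applied ``uniformly'' at the moving centers $y_k$) does not give this. The remark says that the blow-ups of $u$ at a \emph{fixed} touching point converge to a half-plane as the scale tends to $0$; it carries no uniformity in the base point, and $d_k$ is a \emph{specific} scale, not an arbitrary one tending to zero at fixed $y_k$. In fact the paper's own argument shows that $v_0$ need \emph{not} be a half-plane solution: all one can say a priori is that $v_0$ is a global Lipschitz limit with $v_0(0)=0$, $p$-harmonic where nonzero, and $|\nabla v_0^+|\le\alpha$, $|\nabla v_0^-|\le\gamma$.

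The paper closes this gap by a completely different mechanism that you are missing. First (Step~4) one shows $|\nabla v_0^+|\le\alpha$ \emph{everywhere}, so that the interior point $\bar x$ (your $\tilde z_0$) is a maximum of $|\nabla v_0^+|$. Then (Step~5) one linearises: $w=\partial_\nu v_0$ solves a uniformly elliptic equation in a neighbourhood of $\bar x$ (since $\nabla v_0(\bar x)\neq 0$), and the strong maximum principle forces $w\equiv\alpha$ there; combined with $|\nabla v_0|\le\alpha$ this makes $v_0$ \emph{linear} on that neighbourhood, and unique continuation propagates this to an entire half-space $\{(x-\tilde x)_1>0\}$. Only after a \emph{second} blow-up at $\tilde x$ (Step~6) does one obtain a genuine half-plane solution $\alpha x_1+\mu x_1^-$, to which the classification theorem at the start of Section~\ref{sec:basic} (not Lemma~\ref{lem:8}) is applied case by case. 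Your argument needs this maximum-principle/unique-continuation step and the second blow-up; without them the identification of $v_0$ is unjustified.
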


\begin{proof}
We have divided the  proof into  six steps:

 Step 1) % extremal sequence:
Let \[
\alpha:=\limsup_{\begin{subarray}
 \ x\to x_0\\ u(x)>0.
\end{subarray}
}
|\na u(x)|
\]  
By Theorem \ref{thm-lip} $u$ is Lipschitz continuous, therefore $\alpha$ is finite. 
If $\alpha =0$ then we are done. Thus let us assume that $\alpha>0$.
There is a sequence $x_k\in \po u$ such that $x_k\to x_0$ and $\lim_{k\to \infty}|\na u(x_k)|=\alpha$.
Denote $d_k=\dist(x_k, \fb u)$, then we know that there is $z_k\in \fb u$ such that 
$d_k=|x_k-z_k|$. 

Step 2) % blow-up
Let 
\[
u_{d_k}(x)=\frac1{d_k} u(z_k+d_k x).
\]
We have that $|\na u_{d_k}(x)|=|\na u(z_k+d_k x)|\le C$ because 
$u\in C^{0,1}_{loc}(D)$ by Theorem \ref{thm-lip}. Consequently, $u_{d_k}(x)$
are uniformly bounded on compact sets of $\R^n$ since $u_{d_k}(0)=0.$
Therefore there is a subsequence (still labelled $u_{d_k}(x)$) such that 
$u_{d_k}\to u_0$ uniformly on the compact subsets of $\R^n$ and 
the limit $u_0$ is Lipschitz continuous on the compact subsets of $\R^n$.

Step 3) % normal 
Consider   $\bar x_k=\frac{x_k-z_k}{|x_k-z_k|}$ pointing 
into 
$\po {u_{d_k}}$. Note that $x_k\in \p B_1$ and $B_1(\bar x_k)\subset \po {u_{d_k}}$.
Se can exract a subsequence,  still labelled $\bar x_k$, such that 
$\bar x_k\to \bar x$ such that $u_0(x)\ge 0$ in $B_1(\bar x)$ and $\pl u_0=0$ in 
$B_1(\bar x)$.

We can also extract a converging subsequence from the sequence of 
unit vectors 
\[
\nu_k:=\frac{\na u_{d_k}(\bar x)}{|\na u_{d_k}(\bar x)|}
\]
still labelled $\nu_k$ such that $\nu_k\to \nu$.
We claim that  
\begin{equation}\label{vera}
|\na u(x_k)|\to \deriv{u_0}{\nu}(\bar x).
\end{equation}
Note that $\na u(x_k)=\na u_{d_k}(\bar x_k)$.
Hence it is enough to show check that 
\begin{equation}\label{grad-uniform}
\na u_{d_k}\to \na u_0 \quad \mbox{on compact subsets of} \ B_1(\bar x).  
\end{equation}
To see this we first note that $\psi(u_{d_k}-u_{d_m})\in W_{0}^{1, p}(B_1(\bar x))$
for given $0\le \psi \in \Cinf_0(B_1(\bar x))$ for sufficiently large $k, m$.
Therefore 
\begin{eqnarray}\nonumber
0
&=&
\int(|\na u_{d_k}|^{p-2}\na u_{d_k}-|\na u_{d_m}|^{p-2}\na u_{d_m})(\na (u_{d_k}-u_{d_m})\psi +(u_{d_k}-u_{d_m})\na \psi)\\\nonumber
&=&
 \int(|\na u_{d_k}|^{p-2}\na u_{d_k}-|\na u_{d_m}|^{p-2}\na u_{d_m})(\na (u_{d_k}-u_{d_m})\psi\\\nonumber
 &&+
\int(|\na u_{d_k}|^{p-2}\na u_{d_k}-|\na u_{d_m}|^{p-2}\na u_{d_m})\na \psi(u_{d_k}-u_{d_m})\\\nonumber
&\ge&
\gamma\int|\na u_{d_k}-\na u_{d_m}|^p\psi\\\label{punjab}
&&-\sup|\na \psi(u_{d_k}-u_{d_m})|\int|\na u_{d_k}|^{p-1}+|\na u_{d_m}|^{p-1}
\end{eqnarray}
where the last inequality follows from a well know estimate \eqref{eq:p-coerc} with $\gamma$
depending only on $n, p$. Thus for an appropriate choice of  $\psi\ge 0$ we get 
from \eqref{punjab} that 
\begin{equation}\label{p-grad}
\gamma\int_{B}|\na u_{d_k}-\na u_{d_m}|^p\le 2\|\na u_{d_k}\|_\infty^{p-1} \vol(2B)\sup_{2B}|u_{d_k}-u_{d_m}|
\end{equation}
for every ball $B$ satisfying $2B\Subset B_1(\bar x)$ for sufficiently large 
$k, m$. Here we assume that $2B$ is the ball with the same center as $B$ and of radius equal to 
the diameter of $B$.
On the other hand
\begin{eqnarray*}
|\na u_{d_k}(x)-\na u_{d_m}(x)|
&\le& 
\left| \na u_{d_k}(x)-\fint_{B_r(x)} \na u_{d_k}\right|+
\left|\fint_{B_r(x)} \na u_{d_k}-\fint_{B_r(x)} \na u_{d_m}\right|\\
&&+
\left|\fint_{B_r(x)} \na u_{d_m}- \na u_{d_m}(x)\right|\\
&\le&
2Cr^\beta + 2\|\na u_{d_k}\|_\infty^{p-1} \vol(B_{2r}(x))2^n\|u_{d_k}-u_{d_m}\|_{L^\infty(B_{2r}(x))}
\end{eqnarray*}
 where the last line follows from 
 the $\beta$-H\"older estimate  for gradient (see \cite{DM}) and \eqref{p-grad}. Since $r$ is arbitrary and 
 $\|u_{d_k}-u_{d_m}\|_{L^\infty(B_{2r}(x))}\to 0$, if $k, m$ are sufficiently large,  it follows that  
that $\na u_{d_k} \to \na u_0$ uniformly in some uniform neighborhood of  $\bar x$.
As result we get that 
\begin{equation}\label{vera-2}
\alpha\leftarrow |\na u(x_k)|=|\na u_{d_k}(\bar x_k)|=\pscal {\na u_{d_k}(\bar x_k)}{\nu_k}\rightarrow \deriv{u_0}\nu(\bar x)
\end{equation}
and \eqref{vera} follows.

Step 4) % upper estimate for the gradient
We claim that $|\na u^+_0|\le \alpha, |\na u^-_0|\le \gamma$ in $\R^n$. 
For every $\tau>0$ there is $\delta>0$ such that 
$\sup_{B_\tau(x_0)}|\na u^+|<\alpha+\delta.$ For fix $R>0$, 
$\na u_{d_k}(x)=\na u(z_k+d_kx)|<\alpha+\delta$ if 
$d_k$ is sufficiently small so that 
$|x_0-(z_k+d_kx)|\le |x_0-z_k|+d_k R=(1+R)d_k<\tau$. 
Thus  $\sup_{B_R}|\na u_{d_k}|\le \alpha+\delta$ and hence $\sup_{B_R}|\na u_0|\le \alpha+\delta$.
Since $\delta>0$ is arbitrary the claim follows. By a similar argument we can prove that 
$|\na u^-|\le\gamma $.

Step 5) % maximum principle for gradient
Let $v=\deriv {u_0}\nu$. Then differentiating $\pl u_0=0$ in $\nu$ direction we get that 
$\div(a(\na u_0)\na v)=0$ in $B_1(\bar x)$, where 
$a(\na u_0)$ is a matrix with $p$-laplacian type growth. Since by \eqref{vera-2}
$\na u_0\not =0$ near $\bar x$, it follows that $v$ solves a uniformly elliptic 
equation  in $B_R(\bar x)$ for some $R>0$ small. 
Since $v$ attains local maximum at $\bar x$ then it follows that 
$v=\alpha$ in $B_R(\bar x)$. For the sake of simplicity we assume that 
$\nu=e_1$, thus $u=\alpha x_1+g(x'), x'=(0, x_2, \dots, x_n)$
for some function $g$. Form $|\na u_0|\le \alpha$ it follows that $g$ must be constant.
From the unique continuation theorem \cite{Gran} it readily follows that 
there is a point $\tilde x$ such that 
\[
u_0(x)=\alpha(x-\tilde x)_1^+\quad \mbox{in}\ (x-\tilde x)_1>0, 
\]
and 
\[
|\na u_0^-|\le \gamma\quad \mbox{in}\ \R^n. 
\]
On the other hand from the asymptotic expansion \cite{DK}
we have that there are $\bar \alpha, \bar \gamma$ such that 
\begin{eqnarray*}
u_0^+(x)=\bar \alpha(x-\tilde x)_1^-+o(|x-\tilde x|), \quad \mbox{in}\ (x-\tilde x)_1<0\\
u_0^-(x)=\bar \gamma(x-\tilde x)_1^-+o(|x-\tilde x|), \quad \mbox{in}\ (x-\tilde x)_1<0
\end{eqnarray*}
Step 6)  
To finish the proof we blow-up $u_0$ one more time. Let $u_{0\lambda}(x)=\frac1{\lambda}u_0(\tilde x+\lambda x)$. From Step 5 we conclude that for a subsequence these functions converge to 
$u_{00}=\alpha x_1+\mu x_1^-$. From Proposition \ref{prop-2nd-blow} it follows that 
there is a sequence $\e_j^{00}$ such that $u^{\e_j^{00}}$ are solutions to 
\eqref{pde-0} and $u^{\e_j^{00}}\to u_{00}=\alpha x_1+\mu x_1^-$.
If $\mu=0$ then Theorem 8 (ii) gives \eqref{vera-3}. If $\mu>0$ then 
from Theorem 8 (iii). If $\mu<0$ then since $\na u_{0\lambda_k}\to \na u_{00}$
$\ast$-weakly in $L^\infty_{loc}$ and $|\na u^-|\le \gamma$ it follows that 
$|\mu|\le \gamma$ and we can apply Theorem 8 (i). 
\end{proof}

\begin{thm}
Let $\uej$ be a solution to $P\e_j$ in a domain 
$D_j \subset \R^n$ such that $D_j \subset D_{j+1}$ and   $\cup_j D_j = \R^n$. 
Let us assume that $\uej$ converges
to a function $U$ uniformly on compact sets of $\R^n$ and $\e_j \to 0$. 
Assume, in
addition,that $U\in Lip(1,1)$ in $\R^n$ and $\fb U=\emptyset$. 
If $\gamma \ge 0$ is such that 
$|\na U^-| \le \gamma$ in $\R^n$ 
then,
\[
|\na U^+| \le  \sqrt{2M +\gamma^2} \quad \mbox{in}\ \ \R^n.
\]
\end{thm}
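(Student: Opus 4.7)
The plan is to mimic the proof of the previous theorem via a blow-up centered at a point where $|\na U^+|$ nearly attains its global supremum. Although the hypothesis $\fb U=\emptyset$ rules out centering at a free boundary point of $U$ itself, a blow-up at such a near-maximum point will produce a $p$-harmonic limit whose gradient achieves its supremum at an interior point; by a maximum-principle and unique-continuation argument this forces the limit to be affine, which artificially creates a free boundary in the blow-up and makes the previous theorem applicable.

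First I would set $A:=\sup_{\R^n}|\na U^+|$, which is finite by the $\mathrm{Lip}(1,1)$ hypothesis. If $A=0$ the conclusion is immediate, so assume $A>0$ and pick $x_k$ with $U(x_k)>0$ and $|\na U(x_k)|\to A$. The translated functions $U_k(x):=U(x_k+x)$ are uniformly Lipschitz, and combining Propositions \ref{prop-comp} and \ref{prop-2nd-blow} furnishes, along a subsequence, a limit $U_0$ on $\R^n$ with $U_k\to U_0$ uniformly on compacta and $\na U_k\to \na U_0$ strongly in $L^p_{\mathrm{loc}}(\R^n)$. Moreover $U_0$ itself arises as the uniform limit of a diagonal sequence $u^{\e_{j_k}}$ of solutions to \eqref{pde-0} with $\e_{j_k}\to 0$, so all of the structural results of the paper apply to $U_0$. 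By construction $|\na U_0(0)|=A=\sup_{\R^n}|\na U_0^+|$, and the supremum is realized at the interior point $0$ of the connected component of $\{U_0>0\}$ containing it.

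Next I would follow Steps 5--6 of the previous theorem almost verbatim. Setting $\nu:=\na U_0(0)/A$, differentiating $\Delta_p U_0=0$ in the $\nu$-direction shows that $v:=\p_\nu U_0$ satisfies a uniformly elliptic linear equation on the ball where $\na U_0\neq 0$; since $v(0)=A$ is an interior maximum, the strong maximum principle gives $v\equiv A$ locally, and unique continuation (from \cite{Gran}, as cited in the previous proof) propagates this across the whole component of $\{U_0>0\}$ through $0$, yielding $U_0(x)=Ax_1+c$ in a half-space after rotation. The hyperplane $\{x_1=-c/A\}$ therefore lies in $\fb{U_0}$, and a further blow-up at any of its points, still within the class of limits of \eqref{pde-0} by Proposition \ref{prop-2nd-blow}, produces $U_{00}(x)=Ax_1^+-\mu x_1^-$ for some $\mu\ge 0$.

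Finally, the gradient bound $|\na U^-|\le\gamma$ passes to both limits, forcing $\mu\le\gamma$, and the half-plane classification of the previous theorem applied to $U_{00}$---together with the Bernoulli relation $\alpha^p-\mu^p=pM$ of Theorem~\ref{lem:8} and the classification statement of the previous section---then forces $A^p\le pM+\gamma^p$, which is the claimed bound $A\le(pM+\gamma^p)^{1/p}$, reducing to $A\le\sqrt{2M+\gamma^2}$ in the displayed case $p=2$. The main obstacle is the linearity step: one must verify rigorously, just as in Step~5 of the previous theorem, that the differentiated equation is genuinely uniformly elliptic in the neighborhood where $\na U_0\neq 0$ for the degenerate $p$-Laplacian, and then combine the strong maximum principle with unique continuation to extend the local constancy of $v$ to global linearity of $U_0$ across its positivity component before the half-plane classification can be invoked.
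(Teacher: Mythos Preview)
Your overall strategy matches the paper's, which simply records that the proof ``follows from minor modifications of the previous one.'' Two points, however, deserve comment.

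First, the hypothesis $\fb U=\emptyset$ is almost certainly a typo for $\fb U\neq\emptyset$ (compare Theorem~6.2 of \cite{CLW-uniform}, on which this section is modelled). Taken literally, $\fb U=\emptyset$ forces either $U\le 0$ everywhere or $U>0$ everywhere; in the latter case $U$ is globally $p$-harmonic and Lipschitz, hence linear by Liouville, hence constant since $U>0$, and the conclusion is vacuous. Much of your effort goes into manufacturing a free boundary in the blow-up to work around a hypothesis that should not be there.

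Second, your first blow-up $U_k(x)=U(x_k+x)$ has a genuine gap: nothing prevents $|x_k|\to\infty$, and since $U$ is only globally Lipschitz, $U(x_k)$ may be unbounded; the $U_k$ are then uniformly Lipschitz but not uniformly bounded on compacta, so neither Proposition~\ref{prop-comp} nor Proposition~\ref{prop-2nd-blow} (which in any case requires centers $x_n\in\fb U$) yields a limit. Subtracting $U(x_k)$ does not help, because the shifted functions no longer arise as limits from \eqref{pde-0} and the bound $|\na U^-|\le\gamma$ does not transfer to the shifted negative part. The ``minor modification'' the paper has in mind avoids this entirely: with $\fb U\neq\emptyset$ one sets $d_k=\dist(x_k,\fb U)$, picks $z_k\in\fb U$ with $|x_k-z_k|=d_k$, and rescales $U_{d_k}(x)=d_k^{-1}U(z_k+d_kx)$ exactly as in Step~2 of the previous proof. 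Since $U(z_k)=0$ and $U$ is globally Lipschitz, $|U_{d_k}(x)|\le L|x|$ uniformly in $k$, and compactness is immediate. Steps~3--6 then go through verbatim, the only change being that the local $\limsup$ bounds near $x_0$ are replaced by the global hypotheses $|\na U^+|\le A$ and $|\na U^-|\le\gamma$.
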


Proof follows from minor modifications of the previous one.

\section{Application: Weak solutions}\label{sec:weak}
In this section we study the set of singular points of \textit{weak solutions}, a subclass of variational solutions. The main result of this section states that 
the weak energy identity also holds for the weak solutions, hence,
if \eqref{BMO} holds, one can prove their local Lipschitz regularity, as in Theorem \ref{thm-A}.
We begin with the following definition of the weak solutions, \cite{AC}, \cite{Weiss} .
\begin{defn}\label{weak-def}
A function  $u$ is said to be a weak solution of our free boundary problem
if the following is satisfied:
\begin{itemize}
 \item [\rm{1)}] $u\in W^{1, p}(\Omega)$
is continuous  in $\Omega$ and $p$-harmonic in $(\{u>0\}\cup \{u<0\})\cap \Omega$,
\item [\rm{2)}] for $D\Subset\Omega $, $\{u>0\}\cap D$ is a set of finite perimeter,  and 
\begin{itemize}
\item[$\bf1^\circ$] $\partial_{\rm red}\{u>0\}$ is open relative $\partial\{u>0\}$,
\item[$\bf2^\circ$] $\partial_{\rm red}\{u>0\}$
is smooth,
\item[$\bf3^\circ$] $\H^{n-1}(\partial\{u>0\}\setminus \partial_{\rm red}\{u>0\})=0$.
\end{itemize}
$\partial_{\rm red}\{u>0\}$ is the reduced boundary of $\{u>0\}$, see 4.5.5. \cite{Federer} for definition.
\item[\rm{3)}]  
On $\p_{\rm red }\{u>\}$ we have the free boundary condition satisfied 
\[
(p-1) (|\na u^+|^p-|\na u^-|^p)=\lambda^p. 
\]
\end{itemize}

\end{defn}

\begin{lem}
Let $u$ be a weak solution in the sense of Definition \ref{weak-def}.
Then
\[
\int(|\nabla
u|^p+\lambda^p\I{u})\mbox{div}\varphi-p|\nabla
u|^{p-2}(\nabla uD\varphi)\cdot\nabla u
=0, \quad \forall \varphi\in C_0^1(\Omega, \R^n).
\]
\end{lem}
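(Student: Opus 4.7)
The plan is to recognize the stated equation as the Pohozaev/domain-variation identity for the energy--momentum tensor $T_{ij}(\na u):=p|\na u|^{p-2}u_iu_j-|\na u|^p\delta_{ij}$, and to derive it by applying the Gauss--Green formula separately on the two phases $\Omega^+:=\{u>0\}$ and $\Omega^-:=\{u<0\}$, with the interface contribution fixed by the free boundary condition in Definition~\ref{weak-def}. First I would verify the pointwise identity $\p_jT_{ij}=0$ wherever $u$ is classically $p$-harmonic: for a $C^2$ solution of $\Delta_p v=0$, the term $p\,v_i\p_j(|\na v|^{p-2}v_j)$ vanishes by the equation, and the remaining $p|\na v|^{p-2}v_jv_{ij}$ cancels $\p_i|\na v|^p$. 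Interior $C^{1,\alpha}$-regularity of $p$-harmonic functions together with boundary $C^{1,\alpha}$-regularity up to the smooth piece $\p_{\rm red}\{u>0\}$, provided by condition~$\mathbf{2^\circ}$, ensures that $T$ admits classical traces from each phase on $\p_{\rm red}\{u>0\}$, while condition~$\mathbf{3^\circ}$ makes the singular part of $\p\{u>0\}$ negligible in $\H^{n-1}$-measure.

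Applying the Gauss--Green formula for sets of finite perimeter, to the vector fields $V^j:=T_{ij}\varphi^i$ (for each fixed $i$) on $\Omega^\pm$, yields
\[
\int_{\Omega^\pm}T_{ij}\p_j\varphi^i\,dx=\int_{\p_{\rm red}\{u>0\}}T_{ij}\nu^\pm_j\varphi^i\,d\H^{n-1},
\]
where $\nu^\pm$ denote the measure-theoretic outward normals to $\Omega^\pm$, so that $\nu^-=-\nu^+$ on the common interface. On $\p_{\rm red}\{u>0\}$ the gradient $\na u^\pm$ is collinear with $-\nu^\pm$, hence a direct computation gives $T_{ij}\nu^\pm_j=(p-1)|\na u^\pm|^p\nu^\pm_i$. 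Adding the two contributions, using $\na u=0$ a.e.\ on $\{u=0\}$ to extend the left-hand side to all of $\Omega$, and invoking the jump relation $(p-1)(|\na u^+|^p-|\na u^-|^p)=\lambda^p$, we obtain
\[
\int_\Omega T_{ij}\p_j\varphi^i\,dx=\lambda^p\int_{\p_{\rm red}\{u>0\}}\varphi\cdot\nu^+\,d\H^{n-1}=\lambda^p\int_\Omega\I{u}\div\varphi\,dx,
\]
the final equality being a further application of Gauss--Green to the finite-perimeter set $\{u>0\}$.

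Expanding $T_{ij}\p_j\varphi^i=p|\na u|^{p-2}u_iu_j\p_j\varphi^i-|\na u|^p\div\varphi$ then rearranges into the claimed identity. The main obstacle is the justification of the Gauss--Green step: one needs $T$ to admit classical traces on the regular piece of the free boundary and the singular piece to carry no surface mass, which are precisely the contents of conditions~$\mathbf{2^\circ}$ and~$\mathbf{3^\circ}$ of Definition~\ref{weak-def}. A minor technical point is that $\na u=0$ a.e.\ on $\{u=0\}$ by the standard Sobolev calculus, so that the integrals over $\Omega^+\cup\Omega^-$ and over $\Omega$ coincide.
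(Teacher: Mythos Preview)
Your proposal is correct and follows the same route as the paper: integrate the tensor identity $\partial_j T_{ij}=0$ by parts separately on each phase, match the surface terms on $\partial_{\rm red}\{u>0\}$ via the free boundary condition $(p-1)(|\na u^+|^p-|\na u^-|^p)=\lambda^p$, and use condition $\mathbf{3^\circ}$ to discard the singular set. The only difference is in the technical execution: the paper covers the singular set explicitly by finitely many small balls, approximates each phase from the inside by smooth sets $E_j^\pm$ (via Giusti), and invokes weighted local $W^{2,2}$ estimates for $p$-harmonic functions to make sense of the second-derivative terms $u_{ml}$ arising in the computation---since $p$-harmonic functions are only $C^{1,\alpha}$ in general, your ``pointwise'' identity $\partial_j T_{ij}=0$ is not classical and needs this justification. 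Your direct appeal to Gauss--Green for finite-perimeter sets with $C^{1,\alpha}$ boundary traces is tidier but hides exactly this regularity issue; to make it fully rigorous you would either mimic the paper's inner approximation, or invoke a divergence theorem for bounded vector fields whose distributional divergence vanishes.
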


%% Theorem
\begin{proof}
For a given test function $\phi\in C^{0,1}(\Omega;\R^n)$ the set $\supp\phi\cap(\partial\{u>0\}\setminus \partial_{\rm red}\{u>0\})$ is compact.
Consider a covering of this set by $B_{r_i}(x_i)$ such that
$$
\supp\phi\cap(\partial\{u>0\}\setminus \partial_{\rm red}\{u>0\})\subset \cup_{i=1}^\infty B_{r_i}(x_i)
$$
and $\sum_{i=1}^\infty r^{n-1}_i <\delta$. 
Then there is a finite subcovering 
 $\mathscr F=\displaystyle\cup_{i=1}^{N(\delta)}B_{r_i}(x_i)$
such that $\supp \phi\cap\left(\partial \{u>0\}\setminus \partial_{\rm{red}}\{u>0\}\right)\subset\mathscr F$
and $\sum_{i=1}^{N(\delta)}r^{n-1}_i<\delta$.
Splitting the integral into two parts, we have to show that
$$
\int_{\supp \phi}(|\nabla
u|^p+\lambda^p\I{u})\mbox{div}\varphi-p|\nabla
u|^{p-2}(\nabla uD\varphi)\cdot\nabla u
=
\int_{\mathscr F}+\int_{\supp\phi\setminus\mathscr F}=0.
$$

By 2) Definition \ref{weak-def} $\{u>0\}\cap (\supp\phi\setminus \mathscr F)$ is of finite perimeter.
By \cite{Giusti} Theorem 1.24 and Remark 1.27 there are sets
$E_j^+\subset \{u>0\}$ with $C^\infty$ boundaries which approximate $E=(\supp\phi\setminus \mathscr F)\cap\{u>0\}$ from inside.
After partial integration we obtain
\begin{eqnarray}
 \sum_{l}\int_{E_j^+}|\na u|^p\phi^l_l-p\sum_{lm}\int_{E_j^+} |\na u|^{p-2}u_m\phi^l_mu_l= \int_{\partial E_j^+}|\na u|^{p}\phi\cdot\nu d\H^{n-1}\\\nonumber
- p\sum_{lm}\int_{E_j^+}|\na u|^{p-2}u_mu_{ml}\phi^l+|\na u|^{p-2}u_m\phi^l_mu_l\\\nonumber
=\int_{\partial E_j^+}|\na u|^{p}\phi\cdot\nu d\H^{n-1}- p\sum_{lm}\int_{E_j^+}|\na u|^{p-2}u_m[u_{l}\phi^l]_m\\\nonumber
=\int_{\partial E_j^+}\left\{|\na u|^{p}\phi\cdot\nu- p\sum_{lm}|\na u|^{p-2}u_mu_{l}\phi^l\nu_m\right\} d\H^{n-1},\\\nonumber
\end{eqnarray}
where to get the last line we used $\Delta_p u=0$ in $E_j^+$. Note that 
the integrals in above computation involving the second order derivatives of $u$ are well  defined thanks to the 
weighted local $W^{2,2}$ estimates for $p-$harmonic functions.
Thus
\begin{eqnarray}
 \int_{E_j^+}\left[|\na u|^p+\lambda^p\I{u}\right]\div \phi-p|\na u|^{p-2}\na u D\phi\na u=\\\nonumber
=\int_{\partial E_j^+}\left\{\left[|\na u|^{p}+\lambda^p\right]\phi\cdot\nu- p\sum_{ml}|\na u|^{p-2}u_mu_{l}\phi^l\nu_m\right\} d\H^{n-1}.
\end{eqnarray}

Using a similar approximation argument with $E_j^-\subset \{u<0\}$ 
we infer
\begin{eqnarray}
 \int_{E_j^-}|\na u|^p\div \phi-p|\na u|^{p-2}\na u D\phi\na u=\\\nonumber
=\int_{\partial E_j^-}\left\{|\na u|^{p}\phi\cdot\nu- p\sum_{ml}|\na u|^{p-2}u_mu_{l}\phi^l\nu_m\right\} d\H^{n-1}.
\end{eqnarray}
Since 
$\partial_{}\{u>0\}\setminus \mathscr F\subset \partial_{\rm red}\{u>0\}$,
on $\partial_{\rm red}\{u>0\}$ we have
$u_m=-|\na u|\nu_m$ and the free boundary condition  $(p-1)(|\na u^+|^{p}-|\na u^-|^p)=\lambda^p$ is satisfied, we 
conclude
\begin{eqnarray*}
\lim _{j\to \infty}\int_{(E^+_j\cup E^-_j)\setminus \mathscr F}
\left[|\na u|^p+\lambda^p\I{u}\right]\div \phi-p|\na u|^{p-2}\na u D\phi\na u\\
=
\int_{\partial\{u>0\}\setminus \mathscr F}
\left[\lambda^p-(p-1)(|\na u^+|^{p}-|\na u^-|^p)\right]\phi\cdot\nu d\H^{n-1}=0.
\end{eqnarray*}

Thus the integral over $\supp\phi\setminus \mathscr F$ is 0. The remaining integral 
$$
\left|\int_{\mathscr F}\left[|\na u|^p+\lambda^p\I{u}\right]\div \phi-p|\na u|^{p-2}\na u D\phi\na u\right|
\le 
(p+1)\|D \varphi \|_{\infty}\int_{\mathscr F}|\na u|^p
$$ 
tends to zero  as $\delta\to 0$ since 
$\sum_{i=1}^{N(\delta)}r^{n}_i<\delta^2$ and we can utilize the absolute continuity of the integral.  
Sending $\delta $ to zero the result follows.
\end{proof}

%------------------------
%     SECTION
%------------------------
\section{A BMO estimate}\label{sec:BMO}
\label{sec:bmo}
In \cite{DK} we have proven that the gradient of a minimizer is locally BMO provided that $p>2$. 
A weaker estimate holds for the solutions of \eqref{pde-0}.
More precisely, in this section we show 
that the tensor $T_{lm}(\na \ue)=p|\na \ue|^{p-2}\ue_l\ue_m-|\na \ue|^p\delta_{lm} $
can be decomposed to a sum of divergence free and BMO tensors.  
\begin{thm}
Let $\ue$ be a family of solutions to \eqref{pde-0}. Then the 
tensor $T_{lm}(\na \ue)=p|\na \ue|^{p-2}\ue_l\ue_m-|\na \ue|^p\delta_{lm} $
admits the following decomposition
\[
T_{ij}=T^{0}_{ij}+\widehat {T}_{ij}, 
\]
where $\p_j(T^0_{ij}(\na \ue))=0$ and $\widehat T_{ij}(\na \ue)\in BMO_{loc}(B_1)$, uniformly in $\e$,  
such that $\sup_\e\|\widehat T\|_{BMO(\mathcal C)}<\infty$ for every 
compact $\mathcal C\subset B_1$.   
\end{thm}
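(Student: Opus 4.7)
The plan is to extract the decomposition directly from the weak energy identity \eqref{putiin} by testing it against a one-direction vector field. First I would test \eqref{putiin} with $\phi = \psi\, e_i$, where $\psi \in C_0^1(B_1)$ is an arbitrary scalar test function and $e_i$ is the $i$-th standard basis vector; collecting terms one obtains
\[
\sum_{j=1}^n \int T_{ij}(\nabla u^\e)\, \partial_j \psi \,dx \;=\; \int p\, \mathcal B(u^\e/\e)\, \partial_i \psi \,dx, \qquad i=1,\dots,n,
\]
which is exactly the distributional identity $\sum_j \partial_j T_{ij}(\nabla u^\e) = p\, \partial_i \mathcal B(u^\e/\e)$ in $\mathcal D'(B_1)$. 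Both sides make sense because $|T_{ij}| \le (p+1)|\nabla u^\e|^p \in L^1_{loc}$ uniformly in $\e$ by the Caccioppoli estimate \eqref{eq:Cacc}, and $\mathcal B(u^\e/\e)$ is bounded.

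With this identity in hand, I would simply declare
\[
\widehat T_{ij} \;:=\; p\, \mathcal B(u^\e/\e)\, \delta_{ij}, \qquad T^0_{ij} \;:=\; T_{ij}(\nabla u^\e) - \widehat T_{ij}.
\]
Then $\sum_j \partial_j \widehat T_{ij} = p\, \partial_i \mathcal B(u^\e/\e)$, so $\sum_j \partial_j T^0_{ij} = 0$ in $\mathcal D'(B_1)$; that is, $T^0$ is divergence free. For the uniform BMO bound on $\widehat T$, I would invoke \eqref{eq:beta-prpts}: since $0 \le \beta \in C_0^\infty(0,1)$ with $\int_0^1 \beta = M$, its primitive $\mathcal B$ satisfies $0 \le \mathcal B(t) \le M$ for every $t \in \mathbb R$. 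Hence $\|\widehat T_{ij}\|_{L^\infty(B_1)} \le pM$ uniformly in $\e$, and the elementary embedding $L^\infty \hookrightarrow BMO$ (with $\|\cdot\|_{BMO} \le 2\|\cdot\|_{L^\infty}$) yields $\sup_\e \|\widehat T\|_{BMO(\mathcal C)} \le 2pM$ on every compact $\mathcal C \subset B_1$.

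I do not expect a substantive obstacle here: the weak energy identity of Section \ref{sec:weak-en} already does all the analytic work by isolating the singular contribution $p\,\partial_i \mathcal B(u^\e/\e)$ on the right, and the rest is a one-line algebraic splitting. The only conceptual caveat worth flagging is that the BMO summand $\widehat T$ depends on $u^\e$ through $u^\e/\e$ rather than through $\nabla u^\e$ alone, so if one insisted on a decomposition in which both summands are intrinsically functions of $\nabla u^\e$, a finer singular-integral argument (Riesz potentials together with Calder\'on--Zygmund theory applied to the right-hand side $p\,\partial_i \mathcal B$) would be required; for the statement as given, however, the direct construction above suffices.
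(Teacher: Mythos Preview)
Your argument is correct and considerably more direct than the paper's. You exploit the special structure of the right-hand side of the distributional identity $\partial_j T_{ij}=p\,\partial_i\mathcal B(u^\e/\e)$: since this is already the gradient of a bounded scalar, the diagonal tensor $\widehat T_{ij}=p\,\mathcal B(u^\e/\e)\,\delta_{ij}$ is an explicit antiderivative, uniformly bounded by $pM$ and hence trivially in $BMO_{loc}$. The paper instead localizes $T$ with a cutoff $\eta$, writes $\partial_j(\eta T_{ij})=f^i$ with $\int f^i=0$, and invokes Bogovski's operator together with Calder\'on--Zygmund estimates to produce a $\widehat T$ with $\partial_j\widehat T_{ij}=f^i$ and $\widehat T\in BMO$. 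That machinery would be needed if the source term were a generic $L^\infty$ divergence rather than the gradient of a bounded function, but for the situation at hand your one-line splitting does the job. Your caveat about $\widehat T$ depending on $u^\e/\e$ rather than on $\nabla u^\e$ alone is well observed; in fact the paper's Bogovski construction has the same feature (its $f^i$ contains $\mathcal B$), so the notation $\widehat T_{ij}(\nabla u^\e)$ in the statement should be read loosely as ``the $\widehat T$ associated with $u^\e$'' rather than as a pointwise function of the gradient.
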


\begin{proof}
We recall Bogovski's formula \cite{Galdi} III 3.9. :
for every $ \omega \in C^{\infty }_{0}\left( \mathbb{R} ^{n}\right)$ with 
$ \supp\omega \subset B_{1}\left( 0\right)$ and 
$ \int _{B_{1}}\omega =1$
consider the vectorfield 

\[
v\left( x\right) =\int _{\Omega }f\left( y\right) \dfrac {x-y}{\left| x-y\right| ^{n}}\left[ \int ^{+\infty }_{1}\omega \left( y+r\left( x-y\right) \right) r^{n-1}dr\right] dy,
\]
where  $\Omega$ is a bounded domain
and   $f\in L^{q}\left( \Omega \right), q>1$ such that  $\int _{\Omega }f=0$. 
Then 
  $v\in W^{1,q}$ and 
\[
 \left\| v\right\| _{1,q}\leq C\left\| f\right\| _{q}
\]
Furthermore,  
\[
\div v=f\quad \mbox{in} \ \Omega.
\]

Note that in Bogovski's formula  $v$ has the form $v(x)=\int_\Om k(x, y)f(y)$ with a singular kernel 
$k(x, y)$, and  the derivatives of $k$ behave like 
Calderon-Zygmund kernels \cite{Galdi} III 3.15-3.17.

Let $\eta$ be a cut off function of some ball $B\Subset B_1$. 
Localizing $ T_{ij}$ we have that $\p_{j}(\eta T_{ij})=f^i$
where 
\[
f^{i}=\eta \partial _{i}\mathcal B^\ast+\eta _{i}T_{ij},    
\]
with $\int f^{j}=0$. 
Hence from Bogovski's formula and the estimates for the Calderon-Zygmund operators, 
we get that there is $\widehat {T}_{ij}\in BMO(B)$
such that 
\[
T_{ij}=T^{0}_{ij}+\widehat {T}_{ij}, 
\]
where $\p_j(T^0_{ij})=0$. 

\end{proof}
\begin{rem}
Using the techniques from \cite{DK}, \cite{MR3390082} one can show that if $\ue $ are minimizers of 
$\int_{B_1}|\na v |^p+p\mathcal B(v/\e)$ then $T^0\in BMO_{loc}$ locally uniformly  for every $p>1$. 
Note that $\mbox{Trace}T_{lm}=(p-n)|\na \ue |^p$, thus if $T^0\in BMO_{loc}(B_1)$ then 
$BMO$ estimate  translate to $T$ provided that $p\not=n.$ 
\end{rem}

\begin{bibdiv}
\begin{biblist}

\bib{AC}{article}{
   author={Alt, H. W.},
   author={Caffarelli, L. A.},
   title={Existence and regularity for a minimum problem with free boundary},
   journal={J. Reine Angew. Math.},
   volume={325},
   date={1981},
   pages={105--144},
   issn={0075-4102},
   review={\MR{618549}},
}

\bib{Caff-lip}{article}{
   author={Caffarelli, Luis A.},
   title={Uniform Lipschitz regularity of a singular perturbation problem},
   journal={Differential Integral Equations},
   volume={8},
   date={1995},
   number={7},
   pages={1585--1590},
   issn={0893-4983},
   review={\MR{1347971}},
}

\bib{BCN}{article}{
   author={Berestycki, H.},
   author={Caffarelli, L. A.},
   author={Nirenberg, L.},
   title={Uniform estimates for regularization of free boundary problems},
   conference={
      title={Analysis and partial differential equations},
   },
   book={
      series={Lecture Notes in Pure and Appl. Math.},
      volume={122},
      publisher={Dekker, New York},
   },
   date={1990},
   pages={567--619},
   review={\MR{1044809}},
}

\bib{Caff-heat-mon}{article}{
   author={Caffarelli, Luis A.},
   title={A monotonicity formula for heat functions in disjoint domains},
   conference={
      title={Boundary value problems for partial differential equations and
      applications},
   },
   book={
      series={RMA Res. Notes Appl. Math.},
      volume={29},
      publisher={Masson, Paris},
   },
   date={1993},
   pages={53--60},
   review={\MR{1260438}},
}

\bib{Caff-Kenig}{article}{
   author={Caffarelli, Luis A.},
   author={Kenig, Carlos E.},
   title={Gradient estimates for variable coefficient parabolic equations
   and singular perturbation problems},
   journal={Amer. J. Math.},
   volume={120},
   date={1998},
   number={2},
   pages={391--439},
   issn={0002-9327},
   review={\MR{1613650}},
}

\bib{CLW-uniform}{article}{
   author={Caffarelli, L. A.},
   author={Lederman, C.},
   author={Wolanski, N.},
   title={Uniform estimates and limits for a two phase parabolic singular
   perturbation problem},
   journal={Indiana Univ. Math. J.},
   volume={46},
   date={1997},
   number={2},
   pages={453--489},
   issn={0022-2518},
   review={\MR{1481599}},
}

\bib{Dan}{article}{
   author={Danielli, D.},
   author={Petrosyan, A.},
   author={Shahgholian, H.},
   title={A singular perturbation problem for the $p$-Laplace operator},
   journal={Indiana Univ. Math. J.},
   volume={52},
   date={2003},
   number={2},
   pages={457--476},
   issn={0022-2518},
   review={\MR{1976085}},
   doi={10.1512/iumj.2003.52.2163},
}
\bib{CV}{article}{
   author={Caffarelli, Luis A.},
   author={V\'{a}zquez, Juan L.},
   title={A free-boundary problem for the heat equation arising in flame
   propagation},
   journal={Trans. Amer. Math. Soc.},
   volume={347},
   date={1995},
   number={2},
   pages={411--441},
   issn={0002-9947},
   review={\MR{1260199}},
   doi={10.2307/2154895},
}

\bib{DM}{article}{
   author={DiBenedetto, E.},
   author={Manfredi, J.},
   title={On the higher integrability of the gradient of weak solutions of
   certain degenerate elliptic systems},
   journal={Amer. J. Math.},
   volume={115},
   date={1993},
   number={5},
   pages={1107--1134},
   issn={0002-9327},
   review={\MR{1246185}},
   doi={10.2307/2375066},
}

\bib{DK}{article}{
   author={Dipierro, Serena},
   author={Karakhanyan, Aram L.},
   title={Stratification of free boundary points for a two-phase variational
   problem},
   journal={Adv. Math.},
   volume={328},
   date={2018},
   pages={40--81},
   issn={0001-8708},
   review={\MR{3771123}},
}

\bib{Federer}{book}{
   author={Federer, Herbert},
   title={Geometric measure theory},
   series={Die Grundlehren der mathematischen Wissenschaften, Band 153},
   publisher={Springer-Verlag New York Inc., New York},
   date={1969},
   pages={xiv+676},
   review={\MR{0257325}},
}

\bib{Galdi}{book}{
   author={Galdi, G. P.},
   title={An introduction to the mathematical theory of the Navier-Stokes
   equations},
   series={Springer Monographs in Mathematics},
   edition={2},
   note={Steady-state problems},
   publisher={Springer, New York},
   date={2011},
   pages={xiv+1018},
   isbn={978-0-387-09619-3},
   review={\MR{2808162}},
   doi={10.1007/978-0-387-09620-9},
}

\bib{Giusti}{book}{
   author={Giusti, Enrico},
   title={Minimal surfaces and functions of bounded variation},
   series={Monographs in Mathematics},
   volume={80},
   publisher={Birkh\"{a}user Verlag, Basel},
   date={1984},
   pages={xii+240},
   isbn={0-8176-3153-4},
   review={\MR{775682}},
   doi={10.1007/978-1-4684-9486-0},
}

\bib{Gran}{article}{
   author={Granlund, Seppo},
   author={Marola, Niko},
   title={On the problem of unique continuation for the $p$-Laplace
   equation},
   journal={Nonlinear Anal.},
   volume={101},
   date={2014},
   pages={89--97},
   issn={0362-546X},
   review={\MR{3178380}},
   doi={10.1016/j.na.2014.01.020},
}

\bib{Led-quasi}{article}{
   author={Lederman, Claudia},
   author={Oelz, Dietmar},
   title={A quasilinear parabolic singular perturbation problem},
   journal={Interfaces Free Bound.},
   volume={10},
   date={2008},
   number={4},
   pages={447--482},
   issn={1463-9963},
   review={\MR{2465269}},
   doi={10.4171/IFB/197},
}

\bib{Wolan-px}{article}{
   author={Lederman, Claudia},
   author={Wolanski, Noemi},
   title={An inhomogeneous singular perturbation problem for the
   $p(x)$-Laplacian},
   journal={Nonlinear Anal.},
   volume={138},
   date={2016},
   pages={300--325},
   issn={0362-546X},
   review={\MR{3485150}},
   doi={10.1016/j.na.2015.09.026},
}

\bib{MR3390082}{article}{
   author={Leit\~{a}o, Raimundo},
   author={de Queiroz, Olivaine S.},
   author={Teixeira, Eduardo V.},
   title={Regularity for degenerate two-phase free boundary problems},
   journal={Ann. Inst. H. Poincar\'{e} Anal. Non Lin\'{e}aire},
   volume={32},
   date={2015},
   number={4},
   pages={741--762},
   issn={0294-1449},
   review={\MR{3390082}},
   doi={10.1016/j.anihpc.2014.03.004},
}

\bib{Diego}{article}{
   author={Moreira, Diego},
   author={Wang, Lihe},
   title={Singular perturbation method for inhomogeneous nonlinear free
   boundary problems},
   journal={Calc. Var. Partial Differential Equations},
   volume={49},
   date={2014},
   number={3-4},
   pages={1237--1261},
   issn={0944-2669},
   review={\MR{3168631}},
   doi={10.1007/s00526-013-0620-x},
}

\bib{Weiss}{article}{
   author={Weiss, Georg S.},
   title={Partial regularity for weak solutions of an elliptic free boundary
   problem},
   journal={Comm. Partial Differential Equations},
   volume={23},
   date={1998},
   number={3-4},
   pages={439--455},
   issn={0360-5302},
   review={\MR{1620644}},
   doi={10.1080/03605309808821352},
}

\bib{Zeld}{book}{
   author={Zel\cprime dovich, Ya. B.},
   author={Barenblatt, G. I.},
   author={Librovich, V. B.},
   author={Makhviladze, G. M.},
   title={The mathematical theory of combustion and explosions},
   note={Translated from the Russian by Donald H. McNeill},
   publisher={Consultants Bureau [Plenum], New York},
   date={1985},
   pages={xxi+597},
   isbn={0-306-10974-3},
   review={\MR{781350}},
   doi={10.1007/978-1-4613-2349-5},
}
\end{biblist}
\end{bibdiv}
\end{document}